\newtheorem{thm}{Theorem}[section]
\newtheorem{prop}[thm]{Proposition}
\newtheorem{lem}[thm]{Lemma}
\newtheorem{example}[thm]{Example}
\newtheorem{false statement}{False statement}
\newtheorem{cor}[thm]{Corollary}
\newtheorem{fact}[thm]{Fact}
\theoremstyle{definition}
\newtheorem{defn}[thm]{Definition}
\newtheorem{claim}[thm]{Claim}
\newtheorem{conj}[thm]{Conjecture}
\makeatletter \@addtoreset{equation}{section}
\def\hl{\mathcal{L}}
\def\hht{\mathcal{T}}
\def\hf{\mathcal{F}}
\def\hg{\mathcal{G}}
\def\ha{\mathcal{A}}
\def\hb{\mathcal{B}}
\begin{document}
\title{\bf\Large On the $C$-diversity of intersecting hypergraphs}
\date{}
\author{Peter Frankl$^1$, Jian Wang$^2$\\[10pt]
$^{1}$R\'{e}nyi Institute, Budapest, Hungary\\[6pt]
$^{2}$Department of Mathematics\\
Taiyuan University of Technology\\
Taiyuan 030024, P. R. China\\[6pt]
E-mail:  $^1$frankl.peter@renyi.hu, $^2$wangjian01@tyut.edu.cn
}

\maketitle

\begin{abstract}
Let $\mathcal{F}\subset \binom{X}{k}$ be a family consisting of $k$-subsets of the $n$-set $X$. Suppose that $\mathcal{F}$ is intersecting, i.e., $F\cap F'\neq \emptyset$ for all $F,F'\in \mathcal{F}$. Let $\Delta(\mathcal{F})$ be the maximum degree of $\mathcal{F}$. For a constant $C\geq 1$ the {\it $C$-diversity} (cf. \cite{MPW}), $\gamma_C(\mathcal{F})$ is defined as $|\mathcal{F}|-C\Delta(\mathcal{F})$. Define $\mathcal{F}_{123} =\left\{F\in \binom{X}{k}\colon |F\cap \{1,2,3\}|=2\right\}$. It has $C$-diversity $(3-2C)\binom{n-3}{k-2}$. The main result shows that for $1< C<\frac{3}{2}$ and $n\geq \frac{42}{3-2C}k$, $\gamma_C(\mathcal{F})\leq \gamma_C(\mathcal{F}_{123})$ with equality if and only if $\mathcal{F}$ is isomorphic to $\mathcal{F}_{123}$. For the case of ordinary diversity $(C=1)$ a strong stability is proven.
\end{abstract}

\section{Introduction}

Let $[n]=\{1,2,\ldots,n\}$ be the standard $n$-element set. Let $\binom{[n]}{k}$ denote the collection of all $k$-subsets of $[n]$. A subset $\hf\subset \binom{[n]}{k}$ is called a {\it $k$-uniform family} or simply {\it $k$-graph}. For $x\in [n]$, let
\[
\hf(x)=\left\{F\setminus \{x\}\colon x\in F\in \hf\right\},\ \hf(\bar{x})= \{F\colon x\notin F\in \hf\}
\]
and note that $|\hf|=|\hf(x)|+|\hf(\bar{x})|$. Define $\Delta(\hf)=\max_{1\leq x\leq n} |\hf(x)|$, {\it the maximum degree}.  Define
\[
\varrho(\hf) = \frac{\Delta(\hf)}{|\hf|},\ \gamma(\hf) =|\hf|-\Delta(\hf),
\]
where $\gamma(\hf)$ is called the {\it diversity} of $\hf$.

A family $\hf\subset \binom{[n]}{k}$ is called {\it intersecting} if $F\cap F'\neq \emptyset$ for all $F,F'\in \hf$. Let us recall the Erd\H{o}s-Ko-Rado Theorem.

\begin{thm}[\cite{ekr}]
Let $n\geq 2k>0$. Suppose that $\hf\subset \binom{[n]}{k}$ is intersecting. Then
\begin{align}\label{ineq-ekr}
|\hf| \leq \binom{n-1}{k-1}.
\end{align}
\end{thm}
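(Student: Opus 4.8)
The plan is to prove the Erd\H{o}s--Ko--Rado bound \eqref{ineq-ekr} by Katona's cyclic permutation (``circle'') method, which disposes of the whole range $n\geq 2k$ at once. First I would fix a cyclic arrangement of $[n]$, i.e.\ a bijection $\pi\colon \mathbb{Z}/n\mathbb{Z}\to[n]$, and call a $k$-set of the form $\{\pi(i),\pi(i{+}1),\dots,\pi(i{+}k{-}1)\}$ an \emph{arc} of $\pi$; there are exactly $n$ arcs. The key lemma is that, since $\hf$ is intersecting and $n\geq 2k$, at most $k$ of these $n$ arcs lie in $\hf$. To prove it, suppose an arc $A$ lies in $\hf$. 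Every other arc of $\hf$ must meet $A$, hence must begin at one of the $2k-2$ starting positions (other than $A$'s own) lying within cyclic distance $k-1$ of $A$'s start. I would pair these $2k-2$ arcs into $k-1$ pairs, each pair consisting of two arcs whose starting positions differ by exactly $k$; because $n\geq 2k$ the two arcs of such a pair are disjoint, so at most one of each pair can belong to $\hf$. Counting $A$ itself, this yields at most $1+(k-1)=k$ arcs in $\hf$.

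Next I would double count pairs $(\pi,F)$ with $\pi$ ranging over all $n!$ cyclic arrangements and $F\in\hf$ an arc of $\pi$. By the lemma each $\pi$ contributes at most $k$, so the total is at most $k\cdot n!$. On the other side, a fixed $k$-set is an arc of $\pi$ exactly when its elements occupy a block of $k$ consecutive positions; there are $n$ such blocks, $k!$ internal orders of $F$ and $(n-k)!$ orders of the complement, so each $F\in\hf$ is counted $n\cdot k!\,(n-k)!$ times. Hence $|\hf|\cdot n\cdot k!\,(n-k)!\leq k\cdot n!$, i.e.\ $|\hf|\leq \frac{k\,(n-1)!}{k!\,(n-k)!}=\binom{n-1}{k-1}$, as claimed.

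The only genuinely delicate point is the pairing in the key lemma: one must arrange the pairs so that the two arcs of each pair are honestly disjoint, and this is exactly where $n\geq 2k$ enters (for $n<2k$ any two arcs meet and the conclusion is false); everything else is bookkeeping. As an alternative, if one prefers to avoid the averaging, the compression approach works: repeatedly apply the shifts $S_{ij}$, which preserve $|\hf|$ and the intersecting property, until $\hf$ is shifted, then induct on $n$. Splitting $\hf$ according to whether a member contains the largest element $n$, the part omitting $n$ is an intersecting $k$-graph on $[n-1]$, and shiftedness forces the link of $n$ to be an intersecting $(k-1)$-graph on $[n-1]$ (if two members of $\hf$ met only in $n$, an appropriate shift $S_{in}$ would produce two disjoint members), so the two induction hypotheses give $\binom{n-2}{k-1}+\binom{n-2}{k-2}=\binom{n-1}{k-1}$; the base case $n=2k$ is immediate, since $\hf$ contains at most one set from each complementary pair.
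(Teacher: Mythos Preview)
Your proof is correct. Katona's cyclic permutation argument is carried out accurately: the pairing of the $2k-2$ candidate arcs into $k-1$ disjoint pairs is exactly the place where $n\geq 2k$ is needed, and the double count yields $|\hf|\cdot n\cdot k!\,(n-k)!\leq k\cdot n!$, hence $|\hf|\leq\binom{n-1}{k-1}$. The alternative shifting-plus-induction outline you sketch is also valid.

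As for comparison with the paper: there is nothing to compare. The paper does not prove this theorem at all; it is quoted as the classical Erd\H{o}s--Ko--Rado theorem with a citation to \cite{ekr} and then used as background. So your write-up supplies a full proof where the paper simply invokes the literature. Either of your two approaches (Katona's averaging or the shifting induction) is standard and would be acceptable; the circle method has the advantage of being self-contained and uniform over the whole range $n\geq 2k$, while the shifting proof connects more directly to the $S_{ij}$ operations that the paper itself uses elsewhere (e.g.\ in the appendix).
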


The {\it full star} $\hht(n,k)=\left\{T\in \binom{[n]}{k}\colon 1\in T\right\}$ shows that \eqref{ineq-ekr} is best possible.

Let us define some more intersecting families: For $3\leq i\leq k+1$,
\[
\hf_i=\hf_i(n,k)=\left\{F\in \binom{[n]}{k}\colon 1\in F,\ F\cap[2,i]\neq \emptyset\right\}\bigcup \left\{F\in \binom{[n]}{k}\colon [2,i]\subset F \right\}.
\]
Note that
\[
|\hf_i| =\binom{n-1}{k-1} -\binom{n-i}{k-1}+\binom{n-i}{k-i+1}\mbox{ and } |\hf_i(\bar{1})| =\binom{n-i}{k-i+1}.
\]
It is not hard to see that for $n>2k$,
\begin{align}\label{ineq-fisequence}
|\hf_3|=|\hf_4|<|\hf_5|<\ldots<|\hf_{k+1}|<\binom{n-1}{k-1}.
\end{align}

Hilton and Milner proved the following stability result.

\begin{thm}[\cite{HM}]\label{thm-hm}
Let $n>2k\geq 4$. Suppose that $\hf\subset \binom{[n]}{k}$ is intersecting but $\hf$ is not a star. Then
\begin{align}\label{ineq-hm}
|\hf| \leq |\hf_{k+1}|=\binom{n-1}{k-1}-\binom{n-k-1}{k-1}+1.
\end{align}
\end{thm}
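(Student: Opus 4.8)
\medskip
\noindent\textbf{Proof proposal.}
The plan is to combine the shifting technique with a cross-intersecting estimate. Recall that the shift $S_{ij}$ (for $i<j$) replaces each $F\in\hf$ with $j\in F$, $i\notin F$ by $(F\setminus\{j\})\cup\{i\}$ whenever that set is not already in $\hf$. Every $S_{ij}$ keeps $\hf$ intersecting and keeps $|\hf|$ unchanged, so it is enough to bound the size after shifting; the only danger is that a shift turns a non-star into a star. Accordingly I would apply shifts and distinguish two outcomes. \emph{(a)} The process terminates at a shifted family that is still intersecting and still not a star. \emph{(b)} Some shift $S_{ij}$ sends the current non-star family $\hg$ to a star. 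Here one uses the standard fact that then $S_{ij}(\hg)$ must have centre $i$ and $\hg\subseteq\{F\colon F\cap\{i,j\}\neq\emptyset\}$; since $|\hf|=|\hg|$ and $\hg$ is intersecting and not a star, after relabelling it suffices to bound an intersecting, non-star family $\hf$ with $\hf\subseteq\{F\in\binom{[n]}{k}\colon F\cap\{1,2\}\neq\emptyset\}$.

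In case (b) I would write $\hf=\hm_{12}\cup\hm_1\cup\hm_2$, where $\hm_{12},\hm_1,\hm_2$ consist of the members of $\hf$ containing, respectively, $\{1,2\}$, only $1$, only $2$. Not being a star forces $\hm_1\neq\emptyset\neq\hm_2$, and for $F\in\hm_1$, $G\in\hm_2$ we have $F\cap G\neq\emptyset$ with $2\notin F$, $1\notin G$, so $\ha=\{F\setminus\{1\}\colon F\in\hm_1\}$ and $\hb=\{G\setminus\{2\}\colon G\in\hm_2\}$ are non-empty cross-intersecting families of $(k-1)$-subsets of the $(n-2)$-element set $\{3,\ldots,n\}$. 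Using the trivial bound $|\hm_{12}|\leq\binom{n-2}{k-2}$ together with the cross-intersecting Hilton--Milner-type inequality
\begin{align}\label{ineq-ci}
\ha,\hb\subseteq\binom{[m]}{t}\ \text{non-empty and cross-intersecting},\ m\geq 2t\quad\Longrightarrow\quad|\ha|+|\hb|\leq\binom{m}{t}-\binom{m-t}{t}+1,
\end{align}
applied with $m=n-2$ and $t=k-1$ (note $m\geq 2k-1>2t$), Pascal's identity gives $|\hf|\leq\binom{n-1}{k-1}-\binom{n-k-1}{k-1}+1$. Inequality \eqref{ineq-ci} can be obtained either by one further round of simultaneous shifting on the pair $(\ha,\hb)$, or by applying the Kruskal--Katona theorem to the complemented family $\{[m]\setminus B\colon B\in\hb\}$ to lower bound the shadow that $\ha$ is forced to avoid.

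In case (a) the family $\hf$ is shifted and not a star, hence not a star at $j$ for each $j\in[k+1]$; taking $F\in\hf$ with $j\notin F$ and pushing it down by shifts that never touch coordinate $j$ yields $[k+1]\setminus\{j\}\in\hf$. As $\hf$ is intersecting, every $F\in\hf$ must meet all $k+1$ of these sets, which is exactly the condition $|F\cap[k+1]|\geq2$. I would then group $\hf$ by the trace $T=F\cap[k+1]$: a trace $T$ (necessarily $|T|\geq2$) accounts for at most $\binom{n-k-1}{k-|T|}$ members, while whenever two traces $T,T'$ are disjoint the corresponding ``tail'' families in $\binom{\{k+2,\ldots,n\}}{k-|T|}$ and $\binom{\{k+2,\ldots,n\}}{k-|T'|}$ are cross-intersecting, so by a mixed-uniformity version of \eqref{ineq-ci} they cannot both be used to capacity. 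Optimising over the admissible families of traces, the maximum is attained by $\{T\subseteq[k+1]\colon 1\in T,\ |T|\geq2\}\cup\{\{2,\ldots,k+1\}\}$ -- that is, by $\hf_{k+1}$ -- and the corresponding sum $\sum_{1\in T,\,|T|\geq2}\binom{n-k-1}{k-|T|}+1$ collapses by Vandermonde's identity to $\binom{n-1}{k-1}-\binom{n-k-1}{k-1}+1$.

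The step I expect to be the main obstacle is exactly where the \emph{global} hypothesis ``not a star'' must be turned into a quantitative gain: pinning down the additive $+1$ in \eqref{ineq-ci} (equivalently, showing that enlarging the smaller side of a cross-intersecting pair beyond a single set is never helpful), and, in case (a), the bookkeeping needed to rule out many pairwise-disjoint traces carrying large tails. It is in this last point, valid uniformly for all $n>2k$ rather than only for $n$ large, that a Kruskal--Katona input seems genuinely unavoidable. (Following equality through the argument would moreover show that $|\hf|=|\hf_{k+1}|$ forces $\hf$ to be isomorphic to $\hf_{k+1}$.)
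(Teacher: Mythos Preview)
The paper does not give its own proof of Theorem \ref{thm-hm}; the result is quoted from \cite{HM}. The only argument supplied in the paper is the remark immediately after Theorem \ref{thm-frankl}: if $\hf$ is intersecting and not a star, pick $x$ of maximum degree and $F_0\in\hf$ with $x\notin F_0$; then every $G\in\hf(x)$ meets $F_0$, so $\Delta(\hf)\le\binom{n-1}{k-1}-\binom{n-k-1}{k-1}=\Delta(\hf_{k+1})$, and Theorem \ref{thm-frankl} (also cited, not proved here) yields \eqref{ineq-hm}. So there is essentially nothing to compare your attempt against in this paper.

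Your direct shifting route is a classical alternative, and case (b) is handled correctly: the reduction to two non-empty cross-intersecting $(k-1)$-uniform families on an $(n-2)$-set, together with the inequality you call \eqref{ineq-ci}, gives exactly the Hilton--Milner bound after one application of Pascal. Case (a), however, has a genuine gap. You correctly establish that $[k+1]\setminus\{j\}\in\hf$ for every $j\in[k+1]$ and hence $|F\cap[k+1]|\ge 2$ for all $F\in\hf$. But the ``optimisation over the admissible families of traces'' is not mere bookkeeping: the only structural constraint you have isolated is that \emph{disjoint} traces force cross-intersecting tails, and that alone does not single out $|\hf_{k+1}|$ as the maximum. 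In fact case (a) is not vacuous and does not reduce to your case (b) decomposition: for $k\ge 3$ the family $\binom{[k+2]}{k}$ is shifted, intersecting, not a star, and contains $\{3,\dots,k+2\}$, which misses $\{1,2\}$. To close the gap one must exploit shiftedness further --- e.g.\ combine the easy bound $|\hf(1)|\le\binom{n-1}{k-1}-\binom{n-k-1}{k-1}$ (every $F\ni 1$ meets $[2,k+1]$) with a genuine trade-off between $|\hf(\bar 1)|$ and the slack in $|\hf(1)|$, which is where an injection or a Kruskal--Katona/Hilton argument on $\hf(\bar 1)$ versus the ``unused'' part of the star is actually needed. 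As written, your case (a) is a reasonable plan but not a proof.
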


In \cite{F87-2} the following generalisation of \eqref{ineq-hm} was proved.

\begin{thm}\label{thm-frankl}
Let $n>2k\geq 4$. Fix an integer $i$, $3\leq i\leq k+1$. Suppose that $\hf\subset \binom{[n]}{k}$ is intersecting  and $\Delta(\hf)\leq \Delta(\hf_i)$. Then
\begin{align}\label{ineq-frankl}
|\hf|\leq |\hf_i|.
\end{align}
\end{thm}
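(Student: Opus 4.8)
The plan is to classify $\hf$ by its covering number $\tau(\hf)$, and in the principal case $\tau(\hf)=2$ to decompose $\hf$ at a minimum cover into a cross-intersecting pair plus an unrestricted part.

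\emph{Reductions.} Note $\Delta(\hf_i)=|\hf_i|-|\hf_i(\bar 1)|=\binom{n-1}{k-1}-\binom{n-i}{k-1}$ (attained at the vertex $1$). If $\hf$ is a star, then $|\hf|=\Delta(\hf)\le\Delta(\hf_i)\le|\hf_i|$, so we may assume $\tau(\hf)\ge 2$. If $\tau(\hf)\ge 3$, then the known bounds for intersecting families of covering number at least $3$ (Frankl) give $|\hf|<|\hf_3|$ for $n>2k$, and $|\hf_3|\le|\hf_i|$ by \eqref{ineq-fisequence}; so it remains to treat $\tau(\hf)=2$.

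\emph{The decomposition at a $2$-cover.} Fix a two-element cover $\{a,b\}$ of $\hf$ and set
\[
\ha=\{F\in\hf\colon a\in F,\ b\notin F\},\qquad \hb=\{F\in\hf\colon b\in F,\ a\notin F\},\qquad \hc=\{F\in\hf\colon\{a,b\}\subset F\},
\]
a disjoint decomposition $\hf=\ha\cup\hb\cup\hc$. Deleting $a$ from the members of $\ha$ and $b$ from those of $\hb$ yields families $\ha',\hb'\subset\binom{[n]\setminus\{a,b\}}{k-1}$ on a ground set of size $n-2>2(k-1)$; the intersecting property of $\hf$ is equivalent to $\ha'$ and $\hb'$ being cross-intersecting, both are non-empty (otherwise $\hf$ would be a star), and $\hc$ is unrestricted. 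Since $|\hf|=|\ha'|+|\hb'|+|\hc|$, $|\hf(a)|=|\ha'|+|\hc|$, $|\hf(b)|=|\hb'|+|\hc|$, and $|\hf(x)|=|\ha'(x)|+|\hb'(x)|+|\hc(x)|$ for $x\notin\{a,b\}$, the theorem reduces to bounding $|\ha'|+|\hb'|+|\hc|$ by $|\hf_i|$ subject to all degrees being at most $\Delta(\hf_i)$.

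\emph{The core estimate.} Assume $|\ha'|\ge|\hb'|$. The main tool is the structure of cross-intersecting $(k-1)$-uniform families on $n-2>2(k-1)$ points: by the Kruskal--Katona theorem (or the Frankl--Tokushige cross-intersecting inequalities), such a pair is governed by a set $T\subset[n]\setminus\{a,b\}$ that every member of the smaller family contains and every member of the larger one meets. Enlarging $\hc$ as far as the degree conditions allow, the configuration maximising $|\ha'|+|\hb'|+|\hc|$ for a fixed $|T|=t$ is precisely $\hf_{t+2}$ — namely $\ha'$ all $(k-1)$-sets meeting $T$, $\hb'$ all $(k-1)$-sets containing $T$, and $\hc$ all $k$-sets containing $\{a,b\}$. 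Since $\Delta(\hf_{t+2})=\binom{n-1}{k-1}-\binom{n-t-2}{k-1}$, the hypothesis $\Delta(\hf)\le\Delta(\hf_i)=\binom{n-1}{k-1}-\binom{n-i}{k-1}$ forces $t\le i-2$, whence $|\hf|\le|\hf_{t+2}|\le|\hf_i|$ by \eqref{ineq-fisequence}; equality throughout forces $t=i-2$ and each of $\ha',\hb',\hc$ maximal, i.e.\ $\hf\cong\hf_i$.

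\emph{Main obstacle.} The delicate step is proving that the maximising configuration for fixed $t$ is indeed $\hf_{t+2}$: one has to rule out hybrid configurations in which $\ha'$ or $\hb'$ deviates from the canonical meet-$T$/contain-$T$ shape while $\hc$ is correspondingly enlarged, and for this the per-vertex inequalities $|\ha'(x)|+|\hb'(x)|+|\hc(x)|\le\Delta(\hf_i)$ must be combined with a Kruskal--Katona-type shadow estimate — crude first-moment bounds suffice only when $i$ is close to $3$ or $n$ is large. One clean way to organise the induction underlying this is downwards on $i$: the base case $i=k+1$ is Theorem~\ref{thm-hm} together with the trivial star case, the case $i=3$ follows from $i=4$ since $|\hf_3|=|\hf_4|$, and for $4\le i\le k$ one closes the gap $|\hf_{i+1}|-|\hf_i|$ by the cross-intersecting analysis above; \cite{F87-2} presumably implements this by a direct shifting/exchange argument on the triple $(\ha',\hb',\hc)$.
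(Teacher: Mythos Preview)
The paper does not contain a proof of Theorem~\ref{thm-frankl}: it is quoted as a result from \cite{F87-2}, so there is no in-paper argument to compare your proposal against. What follows is therefore an assessment of the proposal on its own merits, together with a remark on how the original \cite{F87-2} proof actually proceeds.

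Your overall architecture (case split on the covering number, decomposition at a $2$-cover into a cross-intersecting pair plus the doubly covered part) is indeed the backbone of Frankl's original argument. However, two of the three branches are not actually established in your write-up.

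\textbf{The $\tau\ge 3$ branch.} You invoke ``known bounds for intersecting families of covering number at least $3$'' to conclude $|\hf|<|\hf_3|$ for all $n>2k$. This is precisely one of the hard parts of \cite{F87-2}; for $n$ close to $2k$ it is far from immediate, and citing it as ``known'' is circular here. You would need either Frankl's shifting argument from that paper or a separate analysis of the Hilton--Milner-type triangle families to close this case.

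\textbf{The $\tau=2$ branch.} Your ``core estimate'' asserts that a cross-intersecting pair $\ha',\hb'$ is \emph{governed by} a set $T$ that the smaller family contains and the larger meets. This is not a structural theorem for arbitrary cross-intersecting pairs; it describes only the extremal configurations. A generic cross-intersecting pair need not have any such common kernel, so the reduction to $\hf_{t+2}$ does not follow from Kruskal--Katona in the way you suggest. You recognise this in the ``Main obstacle'' paragraph, but that paragraph is an acknowledgement of the gap rather than a resolution of it. In \cite{F87-2} this step is carried out by first shifting $\hf$ (which preserves the hypotheses and can only increase the covering set's regularity), after which one can read off explicit inequalities between $|\ha'|$, $|\hb'|$, $|\hc|$ and the degree bound via Kruskal--Katona/Hilton lemma style counting; the exchange you allude to is executed there, not merely posited.

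In short: the decomposition is right, but both nontrivial cases currently rest on assertions that are themselves theorems of comparable depth to the statement you are proving.
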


Note that if $\hf\subset \binom{[n]}{k}$ is intersecting and $x\in [n]$, $F_0\in \hf$ satisfy $x\notin F_0$ then $G\cap F_0\neq \emptyset$ for all $G\in \hf(x)$. Hence $|\hf(x)| \leq \binom{n-1}{k-1}-\binom{n-k-1}{k-1}=\Delta(\hf_{k+1})$. Consequently, Theorem \ref{thm-frankl} implies Theorem \ref{thm-hm}.

As pointed out by Kupavskii and Zakharov \cite{KZ}, the actual proof in \cite{F87-2} yields the corresponding result for diversity as well.

\begin{thm}
Let $n>2k\geq 4$, $3\leq i\leq k+1$. Suppose that $\hf\subset \binom{[n]}{k}$ is intersecting  and $\gamma(\hf)=|\hf|-\Delta(\hf)\geq \gamma(\hf_i)$ then
\begin{align}
|\hf| \leq |\hf_i|.
\end{align}
\end{thm}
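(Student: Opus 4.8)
The plan is to reduce to a cross-intersecting problem at a vertex of maximum degree and then to appeal to the estimate behind Theorem~\ref{thm-frankl}; the key point, as noted in \cite{KZ}, is that Frankl's argument in \cite{F87-2} does not care whether one is bounding $\Delta(\hf)$ from above or $\gamma(\hf)$ from below. First, we may assume $\hf$ is not a star, since a star has $\gamma=0<\gamma(\hf_i)$. Relabel $[n]$ so that $1$ has maximum degree; then $\Delta(\hf)=|\hf(1)|$, $\gamma(\hf)=|\hf(\bar{1})|$, and a direct computation gives that for $n>2k$ and $3\le i\le k+1$ the family $\hf_i$ also attains its maximum degree at $1$, so $\gamma(\hf_i)=|\hf_i(\bar{1})|=\binom{n-i}{k-i+1}$. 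Thus the hypothesis is $|\hf(\bar{1})|\ge\binom{n-i}{k-i+1}$, and it suffices to prove $|\hf|\le|\hf_{j_0}|$, where $j_0:=\min\bigl\{j:3\le j\le k+1,\ |\hf(\bar{1})|\ge\binom{n-j}{k-j+1}\bigr\}$ (this exists since $\hf(\bar{1})\ne\emptyset$): because $\binom{n-j}{k-j+1}$ is non-increasing in $j$, the index $i$ lies in the set defining $j_0$, whence $j_0\le i$ and $|\hf_{j_0}|\le|\hf_i|$ by \eqref{ineq-fisequence}. Put $\ha=\hf(1)\subseteq\binom{[2,n]}{k-1}$ and $\hb=\hf(\bar{1})\subseteq\binom{[2,n]}{k}$. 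Then $\hb$ is intersecting, $\ha$ and $\hb$ are cross-intersecting (for $A\in\ha$, $B\in\hb$ one has $(\{1\}\cup A)\cap B\ne\emptyset$ with $1\notin B$), and, since $1$ has maximum degree, for every $y\in[2,n]$ at least as many members of $\ha$ avoid $y$ as members of $\hb$ contain $y$.

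It remains to bound $|\ha|+|\hb|$ for such configurations subject to $|\hb|\ge\binom{n-j_0}{k-j_0+1}$, and this is exactly what Frankl's argument yields, with the ``kernel'' pair $\hb=\{B\in\binom{[2,n]}{k}:[2,j_0]\subseteq B\}$, $\ha=\{A\in\binom{[2,n]}{k-1}:A\cap[2,j_0]\ne\emptyset\}$ (namely $\hf_{j_0}(\bar{1})$ and $\hf_{j_0}(1)$) as the extremal configuration. In outline: a $(k-1)$-set $A\subseteq[2,n]$ can belong to $\ha$ only if its complement in $[2,n]$ (which has size $n-k\ge k$) contains no member of $\hb$, so $|\ha|\le\binom{n-1}{k-1}-|\partial^{*}\hb|$, where $\partial^{*}\hb=\{S\in\binom{[2,n]}{n-k}:B\subseteq S\text{ for some }B\in\hb\}$ is the upper shadow of $\hb$; one then plays a Kruskal--Katona-type lower bound for $|\partial^{*}\hb|$ in terms of $|\hb|$ against $|\hb|$ itself. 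Since the outcome depends on how star-like $\hb$ is, the argument splits on the covering number $\tau(\hf)$, which is at least $2$ (as $\hf$ is not a star) and is compared with $\tau(\hf_i)=2$: for $\tau(\hf)\ge3$ one invokes the known bound on intersecting $k$-graphs of covering number $\ge3$ (for $n>2k$ these have fewer than $|\hf_3|\le|\hf_i|$ members, cf.\ \eqref{ineq-fisequence}); for $\tau(\hf)=2$, fixing a $2$-element cover $\{a,b\}$ splits $\hf$ into the sets through both $a$ and $b$ together with two cross-intersecting $(k-1)$-graphs on $n-2$ vertices (the sets through exactly one of $a,b$), and the shadow estimate together with the maximum-degree condition then forces $|\ha|+|\hb|\le|\hf_{j_0}|$.

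The real work, and the step I expect to be the main obstacle, is the $\tau(\hf)=2$ case: there $|\ha|+|\hb|$ is governed by a genuine trade-off --- enlarging $\hb$ enlarges its shadow $\partial^{*}\hb$ and so shrinks the room left for $\ha$ --- but this trade-off alone is not decisive, because a sub-star $\{F\in\binom{[n]}{k}:1\in F\}\setminus\hg$ is intersecting and strictly larger than every $\hf_i$. It is precisely the maximum-degree hypothesis, equivalently $|\hf(\bar{1})|\ge\gamma(\hf_i)>0$ (which already forbids $\hb=\emptyset$, and more generally prevents $\hb$ and the ``star part'' of $\ha$ from both being large), that eliminates such families; making the trade-off sharp and uniform over $3\le i\le k+1$ and $n>2k$ is the content of \cite{F87-2}. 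The only thing to check for the diversity statement is that this argument uses the hypothesis solely through the inequality $|\hf(\bar{1})|\ge\binom{n-i}{k-i+1}$ --- equivalently $\gamma(\hf)\ge\gamma(\hf_i)$ --- and not through the finer $\Delta(\hf)\le\Delta(\hf_i)$, which is the observation of \cite{KZ}.
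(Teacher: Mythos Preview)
Your proposal is correct and takes essentially the same approach as the paper: the paper gives no proof beyond the single remark that, as observed by Kupavskii--Zakharov \cite{KZ}, Frankl's argument in \cite{F87-2} for Theorem~\ref{thm-frankl} uses the hypothesis only through the lower bound $|\hf(\bar 1)|\ge\gamma(\hf_i)=\binom{n-i}{k-i+1}$, and you make exactly the same observation with some added commentary. Your sketch of the internal structure of \cite{F87-2} (the $\tau(\hf)\ge 3$ / $\tau(\hf)=2$ split) is not quite how that paper actually argues---it proceeds via shifting and a direct Kruskal--Katona bound rather than a covering-number dichotomy---but since you explicitly defer the substantive work to \cite{F87-2}, this does not affect the validity of the proposal.
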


\begin{defn}[Pure triangle family]
For $n>2k\geq 4$ define
\[
\hf_{\bigtriangleup} =\left\{F\in\binom{[n]}{k}\colon |F\cap [3]|=2\right\}.
\]
\end{defn}

Lemmons and Palmer \cite{LP} proved that for $n\geq 6k^3$ the diversity of an intersecting family never exceeds $\binom{n-3}{k-2}=\gamma(\hf_3)$. This was subsequently improved by \cite{F17}, \cite{Ku1} and \cite{F2020}. The current record is the following.

\begin{thm}[\cite{FW2022}]\label{thm-fw-2}
Let $n>36k$. Suppose that $\hf\subset \binom{[n]}{k}$ is intersecting. Then
\begin{align}
\gamma(\hf) \leq \binom{n-3}{k-2}
\end{align}
with equality holding if and only if $\hf_{\bigtriangleup}\subset\hf\subset\hf_3$ up to isomorphism.
\end{thm}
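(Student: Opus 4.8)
The plan is to argue by cases on the covering number $\tau=\tau(\hf)$, the basic tool being the elementary inequality $\gamma(\hf)=|\hf|-\Delta(\hf)\le|\hf|-|\hf(v)|=|\hf(\bar v)|$, valid for every $v\in[n]$ and an equality when $v$ has maximum degree. If $\tau\le1$ then $\hf$ lies in a star, so $\Delta(\hf)=|\hf|$ and $\gamma(\hf)\le0<\binom{n-3}{k-2}$; thus we may assume $\tau\ge2$.

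\emph{The case $\tau=2$.} Say $\{1,2\}$ is a cover, and put $\ha=\{F\in\hf:\{1,2\}\subseteq F\}$, $\hb=\{F\in\hf:1\in F,\,2\notin F\}$, $\hc=\{F\in\hf:2\in F,\,1\notin F\}$, so $\hf=\ha\cup\hb\cup\hc$, $|\hf(1)|=|\ha|+|\hb|$, $|\hf(2)|=|\ha|+|\hc|$. Then $\gamma(\hf)\le|\hf|-\max\{|\hf(1)|,|\hf(2)|\}=\min\{|\hb|,|\hc|\}$. Deleting the forced elements turns $\hb,\hc$ into $(k-1)$-uniform families $\hb',\hc'$ on $[3,n]$ that are cross-intersecting; since $n-2\ge2(k-1)$, Pyber's inequality gives $|\hb'|\,|\hc'|\le\binom{n-3}{k-2}^2$, hence $\gamma(\hf)\le\min\{|\hb'|,|\hc'|\}\le\binom{n-3}{k-2}$. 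For equality we need $|\hb'|=|\hc'|=\binom{n-3}{k-2}$ together with equality in Pyber's bound; as $n-2>2(k-1)$, this forces $\hb',\hc'$ to be the full star at one common vertex, say $3$, and also $\Delta(\hf)=|\hf(1)|=|\hf(2)|$, so $|\hf(3)|\le|\hf(1)|$. Since $|\hf(3)|=2\binom{n-3}{k-2}+|\{F\in\ha:3\in F\}|$ and $|\hf(1)|=|\ha|+\binom{n-3}{k-2}$, this last inequality forces $\ha$ to contain every $k$-set $F$ with $\{1,2\}\subseteq F,\,3\notin F$. Consequently $\hf_{\bigtriangleup}\subseteq\hf\subseteq\hf_3$ up to isomorphism; conversely, for any such $\hf$ one has $\Delta(\hf)=|\hf(1)|$ (the degrees of $1,2,3$ dominate all others, using $n>36k$), hence $\gamma(\hf)=|\hf(\bar1)|=\binom{n-3}{k-2}$.

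\emph{The case $\tau\ge3$.} As $\hf$ is not a star, $\Delta(\hf)\le\binom{n-1}{k-1}-\binom{n-k-1}{k-1}=\Delta(\hf_{k+1})$, so there is a least $i_0\in\{3,\dots,k+1\}$ with $\Delta(\hf)\le\Delta(\hf_{i_0})$. If $i_0\ge4$ then $\Delta(\hf)>\Delta(\hf_{i_0-1})$, so $\Delta(\hf_{i_0})-\Delta(\hf)\le\Delta(\hf_{i_0})-\Delta(\hf_{i_0-1})-1=\binom{n-i_0}{k-2}-1$, while Theorem~\ref{thm-frankl} gives $|\hf|\le|\hf_{i_0}|=\Delta(\hf_{i_0})+\binom{n-i_0}{k-i_0+1}$; hence
\[
\gamma(\hf)\le\binom{n-i_0}{k-2}-1+\binom{n-i_0}{k-i_0+1}\le\binom{n-4}{k-2}+\binom{n-4}{k-3}-1=\binom{n-3}{k-2}-1,
\]
and equality is impossible. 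It remains to treat $\tau\ge3$ with $\Delta(\hf)\le\Delta(\hf_3)$, where the target is the strong estimate $|\hf|<\binom{n-3}{k-2}$ (which gives $\gamma(\hf)<\binom{n-3}{k-2}$ at once). This is the crux. I would split on whether $\hf$ is $2$-intersecting: if so, Frankl's bound for $2$-intersecting families, together with the fact that $\tau\ge3$ excludes the ``fixed-pair'' extremum, gives $|\hf|=O(\binom{n-3}{k-3})$; if not, pick $F_1,F_2\in\hf$ with $F_1\cap F_2=\{v\}$, note that every member avoiding $v$ meets both disjoint $(k-1)$-sets $F_1\setminus\{v\}$ and $F_2\setminus\{v\}$, sharpen the bound on $|\hf(\bar v)|$ using that $\tau\ge3$ provides, for each element $a$, a member avoiding $\{v,a\}$, and combine with $|\hf(v)|\le\Delta(\hf_3)$. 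Verifying that these estimates sum to less than $\binom{n-3}{k-2}$ is exactly where the hypothesis $n>36k$ is consumed; a cleaner route is to quote Frankl's classification of intersecting families by covering number, from which an intersecting family with $\tau\ge3$ has fewer than $\binom{n-3}{k-2}$ members once $n>36k$. I expect this last case to be the main obstacle, and the reason the threshold must grow linearly in $k$.
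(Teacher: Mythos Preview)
This theorem is not proved in the present paper: it is quoted from \cite{FW2022} and used as a black box (notably in Claim~\ref{claim-1} of the proof of Theorem~\ref{thm-main}). There is therefore no argument in the paper to compare your attempt against.

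On the substance of your sketch: the cases $\tau\le2$ and $\tau\ge3$ with $\Delta(\hf)>\Delta(\hf_3)$ are handled cleanly (the latter via Theorem~\ref{thm-frankl} is a nice observation). The equality analysis for $\tau=2$ is morally right but leans on the uniqueness case of Pyber's inequality, which you should state and justify explicitly; you also need to check that no vertex outside $\{1,2,3\}$ attains the maximum degree.

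The genuine gap is the residual case $\tau(\hf)\ge3$ with $\Delta(\hf)\le\Delta(\hf_3)$. Your proposed target $|\hf|<\binom{n-3}{k-2}$ is plausible, but neither route you outline reaches it under the hypothesis $n>36k$. The ``pick $F_1,F_2$ with $|F_1\cap F_2|=1$'' argument only yields $|\hf(\bar v)|\le (k-1)^2\binom{n-3}{k-3}$-type bounds, which beat $\binom{n-3}{k-2}$ only when $n/k$ exceeds a quantity of order $k^2$; likewise the general covering-number bounds for $\tau\ge3$ carry polynomial-in-$k$ prefactors and do not give a linear threshold. Obtaining the bound at $n>36k$ is precisely the content of \cite{FW2022}, and its proof goes through the density statement recorded here as Theorem~\ref{thm-fw-3} rather than through covering-number case analysis. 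So your outline would recover the theorem for $n>Ck^3$ (roughly), but not for $n>36k$; you correctly identify this case as the crux, but the tools you list are not strong enough to close it.
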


Let us  introduce  the central notion of the present paper, which is due to Magnan, Palmer and Wood \cite{MPW}.

\begin{defn}
Let $C>0$ be an absolute constant and let $\hf\subset \binom{[n]}{k}$ be an intersecting family. Define the $C$-diversity $\gamma_C(\hf)$ by
\[
\gamma_C(\hf)=|\hf|-C\Delta(\hf).
\]
\end{defn}

If $C\geq 3/2$ then $\gamma_C(\hf_3)<0$. Also if $C\leq 1$ then $\gamma_C(\hf) \geq \gamma(\hf)$ and $\gamma_C(\hf)\geq \gamma_C(\hg)$ whenever $\hf \supset \hg$. These observations should explain that $1<C<3/2$ is range that $\gamma_C(\hf)$ provides meaningful information on $\hf$ as well as estimating its value is more challenging.

Note that $\hf_{\bigtriangleup}\subset \hf_3$, $|\hf_{\bigtriangleup}|=3\binom{n-3}{k-2}$, $\gamma_C(\hf_{\bigtriangleup})=(3-2C)\binom{n-3}{k-2}$. Moreover, if $\hf_{\bigtriangleup}\subsetneq \hf\subset \hf_3$ then $\gamma(\hf_{\bigtriangleup})=\gamma(\hf)$ but $\gamma_C(\hf)<\gamma_C(\hf_{\bigtriangleup})$.

Recall that an intersecting family $\hf\subset \binom{[n]}{k}$ is called {\it saturated} if $\hf\cup\{G\}$ ceases to be intersecting for each $G\in \binom{[n]}{k}\setminus \hf$.

Magnan, Palmer and Wood \cite{MPW} show that for given $C$, $1<C<3/2$ and $n>n_0(k)$ the maximal $C$-diversity is attained by $\hf_{\bigtriangleup}$. Since saturatedness cannot be assumed, they rely on the sunflower method from \cite{F78-2}. Consequently, they need $n$ to be exponentially large with respect to $k$.

We use different methods and show that the same statement holds under linear constraints.

\begin{thm}\label{thm-main}
Let $1< C< \frac{3}{2}$.  Suppose that $\hf\subset \binom{[n]}{k}$ is an intersecting family with $k\geq 3$ and $n\geq \frac{42k}{3-2C}$. Then
\[
\gamma_C(\hf) \leq (3-2C)\binom{n-3}{k-2}
\]
with equality holding if and only if $\hf=\hf_{\bigtriangleup}$ up to isomorphism.
\end{thm}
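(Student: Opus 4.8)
The plan is to reduce to the saturated case, apply the known diversity bound (Theorem~\ref{thm-fw-2}) together with the families $\hf_i$ to control the degree, and then exploit the extra $-(C-1)\Delta(\hf)$ term. Write $D=\Delta(\hf)$ and suppose without loss of generality that $|\hf(1)|=D$, so that $\gamma_C(\hf)=|\hf(1)|+|\hf(\bar 1)|-CD=|\hf(\bar 1)|-(C-1)D$. The key observation is that maximizing $\gamma_C$ forces a tension: large families have $D$ close to $\binom{n-1}{k-1}$, which makes the penalty $(C-1)D$ huge, whereas $\hf_{\bigtriangleup}$ has the comparatively tiny degree $D=2\binom{n-3}{k-2}$. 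So the first step is to show that for the extremal $\hf$ we may assume $D$ is not too large. Concretely, if $D\geq \Delta(\hf_i)$ for some $i$, then Theorem~\ref{thm-frankl} (in its diversity form, the theorem attributed to \cite{KZ}) gives $|\hf|\leq|\hf_i|$, hence $\gamma_C(\hf)=|\hf|-CD\leq|\hf_i|-C\Delta(\hf_i)=\gamma_C(\hf_i)$; so it suffices to check $\gamma_C(\hf_i)<(3-2C)\binom{n-3}{k-2}$ for every $3\leq i\leq k+1$. Using $|\hf_i|=\binom{n-1}{k-1}-\binom{n-i}{k-1}+\binom{n-i}{k-i+1}$ and $\Delta(\hf_i)=|\hf_i|-\binom{n-i}{k-i+1}$, one gets $\gamma_C(\hf_i)=(1-C)\binom{n-1}{k-1}+(C-1)\binom{n-i}{k-1}+\binom{n-i}{k-i+1}$, and since $1-C<0$ the dominant term $(1-C)\binom{n-1}{k-1}$ is already (in absolute value) of order $\binom{n-1}{k-1}$, which for $n\geq\frac{42k}{3-2C}$ dwarfs $(3-2C)\binom{n-3}{k-2}$; a direct estimate of the binomial ratios finishes this range.

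The remaining range is $D<\Delta(\hf_3)=\binom{n-3}{k-1}+\binom{n-3}{k-2}$, which is the hard part. Here I would first invoke the diversity bound $\gamma(\hf)=|\hf|-D\leq\binom{n-3}{k-2}$ from Theorem~\ref{thm-fw-2} (valid since $n>36k$), so $|\hf|\leq D+\binom{n-3}{k-2}$ and therefore $\gamma_C(\hf)=|\hf|-CD\leq\binom{n-3}{k-2}-(C-1)D$. Thus it would suffice to prove $D\geq 2\binom{n-3}{k-2}$, i.e. that the maximum degree is at least twice the "average" contribution — equivalently $\varrho(\hf)$ is not too small. But this is exactly the content of the diversity/max-degree literature only in the reverse direction, so a naive application does not close the gap: when $D$ is as small as, say, $\binom{n-3}{k-2}$, the bound $\binom{n-3}{k-2}-(C-1)D$ can exceed $(3-2C)\binom{n-3}{k-2}$. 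To handle small $D$ I would run a local/shifting argument: apply shifting to assume $\hf$ is shifted (which does not decrease $|\hf|$ and, after care, lets us control $\Delta$), and then use a covering/kernel argument — an intersecting shifted family with small maximum degree must have small cover number behavior forcing many pairs to be "essential", and a counting of the pairs $\{a,b\}$ with $\{a,b\}\subset F$ structure pins down that either $\hf\subseteq\hf_3$ (up to iso) or $|\hf|$ drops below the target. The cleanest route is probably: show that if $\gamma_C(\hf)\geq(3-2C)\binom{n-3}{k-2}$ then the "second diversity" (degree of pairs) is large, iterate the Theorem~\ref{thm-fw-2}-type inequality on link families $\hf(x)$, and conclude $\hf_{\bigtriangleup}\subseteq\hf\subseteq\hf_3$; then the remark in the introduction that $\gamma_C$ strictly decreases on the chain $\hf_{\bigtriangleup}\subsetneq\hf\subseteq\hf_3$ forces $\hf=\hf_{\bigtriangleup}$, giving both the bound and the uniqueness.

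I expect the main obstacle to be precisely the regime of \emph{small} maximum degree $D$, where neither the EKR-type upper bound on $|\hf|$ nor the Frankl $\hf_i$-family comparison is tight enough on its own; one genuinely needs a structural (shifting + cover-number) analysis of intersecting families with $\Delta(\hf)$ of order $\binom{n-3}{k-2}$ to show their size cannot reach $D+\binom{n-3}{k-2}$ unless they sit inside $\hf_3$. A secondary technical point is keeping all binomial-ratio estimates uniform in $C$ throughout $(1,3/2)$, which is where the explicit hypothesis $n\geq\frac{42k}{3-2C}$ gets used — the factor $42$ and the dependence on $3-2C$ should emerge from bounding sums like $\sum_{j\geq 2}\binom{n-3}{k-2}(k/(n-k))^{\,j-2}$ against $(C-1)D$ and $(3-2C)\binom{n-3}{k-2}$. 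The uniqueness part is comparatively soft once the containment $\hf_{\bigtriangleup}\subseteq\hf\subseteq\hf_3$ is established, by the monotonicity remark already recorded in the introduction.
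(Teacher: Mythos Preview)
Your proposal has two genuine gaps, one technical and one structural.

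First, the reduction in Step~1 does not work as written. You claim that ``if $D\ge\Delta(\hf_i)$ then Theorem~\ref{thm-frankl} (in its diversity form) gives $|\hf|\le|\hf_i|$''. Neither version of that theorem says this: Theorem~\ref{thm-frankl} needs $\Delta(\hf)\le\Delta(\hf_i)$, and the diversity form needs $\gamma(\hf)\ge\gamma(\hf_i)$; your hypothesis $D\ge\Delta(\hf_i)$ is the wrong direction for the first and unrelated to the second (indeed the full star has $D>\Delta(\hf_i)$ and $|\hf|>|\hf_i|$ for every $i$). What \emph{is} true, and what you almost reach later, is the simple observation that by Theorem~\ref{thm-fw-2} one has $\gamma_C(\hf)=\gamma(\hf)-(C-1)D\le\binom{n-3}{k-2}-(C-1)D$, so the theorem is immediate once $D\ge 2\binom{n-3}{k-2}$. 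The $\hf_i$ comparison is a detour that does not buy this.

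Second, and more importantly, you correctly identify that the regime of small $D$ is the real obstacle, but the tools you list (shifting, cover number, iterating on links to force $\hf_\triangle\subseteq\hf\subseteq\hf_3$) are neither made precise nor clearly adequate; in particular the containment $\hf\subseteq\hf_3$ is too strong to expect. The paper's proof closes this gap with a tool you do not invoke: Theorem~\ref{thm-fw-3}, which says that any intersecting family of size at least $36\binom{n-3}{k-3}$ satisfies $\varrho(\hf)>\tfrac{2}{3}-\tfrac{k}{n}$. This is used twice. First, combined with the indirect assumption $\gamma_C(\hf)\ge(3-2C)\binom{n-3}{k-2}$ (which forces $|\hf|>41\binom{n-3}{k-3}$), it yields $\Delta(\hf)>(\tfrac{2}{3}-\tfrac{\varepsilon}{21})|\hf|$ and hence $\tfrac{8}{3}\binom{n-3}{k-2}<|\hf|<3\binom{n-3}{k-2}$; this two-sided bound on $|\hf|$ is exactly the replacement for your missing lower bound on $D$. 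Second, Theorem~\ref{thm-fw-3} is applied again to the link $\hf(\bar u)$ to find a second high-degree vertex $v$, after which cross-intersecting lemmas (Lemmas~\ref{lem-key0}, \ref{lem-key}, \ref{thm-fk}) produce a $3$-set $T=\{u,v,w\}$ with $|\hf(\{x\},T)|$ and $|\hf(\emptyset,T)|$ tiny and each $|\hf(P,T)|>\tfrac{7}{9}\binom{n-3}{k-2}$ for two-element $P\subset T$. The proof then finishes with explicit bookkeeping: writing $f_{xy}=\binom{n-3}{k-2}-|\hf(\{x,y\},T)|$, $g_x=|\hf(\{x\},T)|$, $h=|\hf(\emptyset,T)|$, one bounds $3\gamma_C(\hf)\le 3(3-2C)\binom{n-3}{k-2}-2\varepsilon\sum f_{xy}+2\sum g_x+3h$ and uses Lemma~\ref{lem-key} to show $2\sum g_x+3h<2\varepsilon\sum f_{xy}$. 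No shifting or containment in $\hf_3$ is needed.
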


For $\{u,v,w\}\subset [n]$, let us introduce the notation:
\[
\hf_{uvw} =\left\{F\in \binom{[n]}{k}\colon |F\cap T|=2\right\}, \ \hf_{uvw}^* =\left\{F\in \binom{[n]}{k}\colon |F\cap T|\geq 2\right\}.
\]

In \cite{MPW}, Magnan, Palmer and Wood proved a stability result for Theorem \ref{thm-fw-2}.

\begin{thm}[\cite{MPW}]
Let $k\geq 3$ and $t$ a positive integer. If $\hf\subset \binom{[n]}{k}$ is an intersecting family with diversity $\gamma(\hf) =\binom{n-3}{k-2}-t$ and $n$ is sufficiently large, then there exists $\{u,v,w\}\subset [n]$ such that $|\hf_{uvw}\setminus \hf| \leq 3t$.
\end{thm}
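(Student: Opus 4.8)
The plan is to pass, via the covering number, to a cross-intersecting configuration and then invoke the cross-intersecting form of the Hilton--Milner theorem. Write $D=\binom{n-3}{k-2}$, so $\gamma(\hf)=D-t$; since a diversity is non-negative, $t\leq D$, and if $t=D$ the conclusion is immediate because $|\hf_{uvw}|=3D=3t$ for every triple $\{u,v,w\}$. So I assume $t<D$, hence $\gamma(\hf)>0$, so $\hf$ is not a star and its covering number satisfies $\tau(\hf)\geq2$. I would first rule out $\tau(\hf)\geq3$: the estimates for families of covering number at least $3$ inside the proof of Theorem~\ref{thm-fw-2} give $\gamma(\hf)\leq(1-c)D$ for an absolute constant $c>0$ in that case, which for $n$ large relative to $k$ and $t$ contradicts $\gamma(\hf)=D-t$. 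Thus $\tau(\hf)=2$; fix a transversal $\{a,b\}$.

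Next I split $\hf=\ha\sqcup\hb\sqcup\hd$ by the trace on $\{a,b\}$, where $\ha=\{F\in\hf:a\in F,\ b\notin F\}$, $\hb=\{F\in\hf:b\in F,\ a\notin F\}$ and $\hd=\{F\in\hf:\{a,b\}\subset F\}$. Since $\{a,b\}$ is a transversal, $\hf(\bar a)=\hb$ and $\hf(\bar b)=\ha$, so
\[
D-t=\gamma(\hf)=|\hf|-\Delta(\hf)\leq|\hf|-|\hf(a)|=|\hb|,
\]
and symmetrically $D-t\leq|\ha|$. Moreover, if $F\in\ha$ and $G\in\hb$ then $\emptyset\neq F\cap G\subseteq[n]\setminus\{a,b\}$, so the punctured families $\ha'=\{F\setminus\{a\}:F\in\ha\}$ and $\hb'=\{G\setminus\{b\}:G\in\hb\}$ form a cross-intersecting pair of $(k-1)$-uniform families on the $(n-2)$-set $[n]\setminus\{a,b\}$, with $|\ha'|=|\ha|\geq D-t$ and $|\hb'|=|\hb|\geq D-t$.

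The key point is that $D-t$ is within $t$ of the maximum size $\binom{n-3}{k-2}$ of an intersecting $(k-1)$-uniform family on an $(n-2)$-set. I would apply the cross-intersecting version of the Hilton--Milner theorem: for $n$ large, a cross-intersecting pair in $\binom{[m]}{\ell}$ both of whose parts exceed $\binom{m-1}{\ell-1}-\binom{m-1-\ell}{\ell-1}$ in size must have a common element. This gives $w\in[n]\setminus\{a,b\}$ with $w\in F$ for every $F\in\ha\cup\hb$. Hence $\ha=\hf\cap B_1$ and $\hb=\hf\cap B_2$, where $B_1=\{F\in\binom{[n]}{k}:\{a,w\}\subset F,\ b\notin F\}$ and $B_2=\{F\in\binom{[n]}{k}:\{b,w\}\subset F,\ a\notin F\}$ each have exactly $D$ members; therefore $|B_1\setminus\hf|=D-|\ha|\leq t$ and $|B_2\setminus\hf|=D-|\hb|\leq t$.

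Finally I would handle the third block $B_3=\{F\in\binom{[n]}{k}:\{a,b\}\subset F,\ w\notin F\}$, also of size $D$, for which $\hf\cap B_3=\{F\in\hd:w\notin F\}$. Since every member of $\ha\cup\hb$ contains $w$, $\deg_{\hf}(w)=|\ha|+|\hb|+|\{F\in\hd:w\in F\}|$, while $\deg_{\hf}(w)\leq\Delta(\hf)=|\hf|-\gamma(\hf)=|\ha|+|\hb|+|\hd|-D+t$; comparing gives $|\{F\in\hd:w\in F\}|\leq|\hd|-D+t$, so $|\hf\cap B_3|=|\hd|-|\{F\in\hd:w\in F\}|\geq D-t$ and $|B_3\setminus\hf|\leq t$. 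Since $\hf_{abw}=B_1\sqcup B_2\sqcup B_3$, this yields $|\hf_{abw}\setminus\hf|\leq3t$, so $\{u,v,w\}=\{a,b,w\}$ works. I expect the two delicate steps to be the reduction to $\tau(\hf)=2$ --- which needs the covering-number-$\geq3$ bound of Theorem~\ref{thm-fw-2} to lose a constant fraction of $D$ rather than only an additive constant, so that for $n$ large in terms of $k$ and $t$ it beats $D-t$ --- and securing a single common vertex $w$ for both $\ha'$ and $\hb'$ with the correct quantitative form of the cross-intersecting Hilton--Milner inequality; everything else is the elementary counting above.
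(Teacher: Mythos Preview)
This theorem is quoted from \cite{MPW} and is not proved in the present paper; instead the paper establishes the stronger Theorem~\ref{thm-main2} (whence Corollary~\ref{cor-1} recovers the stated result with $\alpha=t/\binom{n-3}{k-2}$ and $d=\max\{k+4,40\}$). The paper's route to Theorem~\ref{thm-main2} is degree-based: from a vertex $u$ of maximum degree one applies Theorem~\ref{thm-fw-3} to $\hf(\bar u)$ to find $v$, then Lemma~\ref{lem-key2} (built on Lemma~\ref{thm-fk}) to find $w$, and finally Lemma~\ref{lem-key} to turn the cross-intersecting constraints into the inequalities \eqref{ineq4-1}--\eqref{ineq4-2}. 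Your proposal takes a genuinely different path---covering number plus a single cross-intersecting step---and your block-by-block counting for $B_1,B_2,B_3$ is correct and rather neat, especially the use of $\deg_\hf(w)\le\Delta(\hf)$ to handle $B_3$. The covering-number route is conceptually cleaner for the qualitative statement; the paper's degree-based method is what buys the explicit linear range in $n$.

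There is, however, a real gap in Step~4. The ``cross-intersecting Hilton--Milner theorem'' you invoke---that a cross-intersecting pair $\ha',\hb'\subset\binom{[m]}{\ell}$ both exceeding $\binom{m-1}{\ell-1}-\binom{m-\ell-1}{\ell-1}$ must share a common element---is \emph{false} at that threshold: take $\ha'=\hb'$ to be the Hilton--Milner family $\{A:1\in A,\ A\cap[2,\ell+1]\ne\emptyset\}\cup\{[2,\ell+1]\}$, which is intersecting, has size $\binom{m-1}{\ell-1}-\binom{m-\ell-1}{\ell-1}+1$, and has empty common intersection. What \emph{is} true, and suffices here, is that if $|\ha'|,|\hb'|\ge\binom{m-1}{\ell-1}-t$ with $t$ fixed and $m$ large, then a common element exists. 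One clean repair: by Lemma~\ref{thm-fk} there is $j$ with $|\ha'(\bar j)|,|\hb'(\bar j)|\le\binom{m-2}{\ell-2}$, so $|\hb'(j)|\ge\binom{m-1}{\ell-1}-t-\binom{m-2}{\ell-2}$; but any $A\in\ha'(\bar j)$ is disjoint from exactly $\binom{m-\ell-1}{\ell-1}$ members of $\binom{[m]\setminus\{j\}}{\ell-1}$, and for $m$ large $\binom{m-\ell-1}{\ell-1}>t+\binom{m-2}{\ell-2}$, forcing $\ha'(\bar j)=\emptyset$ (and symmetrically $\hb'(\bar j)=\emptyset$). With this correction your argument goes through. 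Your Step~1 is also a bit loose---Theorem~\ref{thm-fw-2} is cited here rather than proved, so appealing to its internal estimates is not self-contained---but the needed fact, that $\tau(\hf)\ge3$ forces $|\hf|=O_k\!\left(\binom{n-3}{k-3}\right)$ and hence $\gamma(\hf)=o(D)$, is classical.
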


In the present paper, we improve their result as follows:

\begin{thm}\label{thm-main2}
Let $\hf\subset \binom{[n]}{k}$ be an intersecting family. Define $\alpha$ by $\gamma(\hf)=\left(1-\alpha\right)\binom{n-3}{k-2}$. Let $d$ be an integer satisfying $d\geq 36$. If $0\leq  \alpha<1$ and  $n\geq \frac{dk}{1-\alpha}$, then there exists $\{u,v,w\}\subset [n]$ such that
\begin{align}
&|\hf\setminus \hf_{uvw}^*| \leq  \frac{d\alpha}{2} \binom{n-d+3}{k-d+3}\mbox{ and }\label{ineq4-1}\\[5pt]
&|\hf_{uvw}\setminus \hf|\leq 3\alpha\binom{n-3}{k-2}+\frac{3d\alpha}{2} \binom{n-d+3}{k-d+3}.\label{ineq4-2}
\end{align}
\end{thm}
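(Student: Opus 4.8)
\textbf{Proof proposal for Theorem \ref{thm-main2}.}

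The plan is to start from the known extremal/stability picture for ordinary diversity (Theorem \ref{thm-fw-2}), whose proof in \cite{FW2022} already isolates a triple $\{u,v,w\}$ such that $\hf$ is ``close'' to $\hf_{uvw}$ when $\gamma(\hf)$ is near $\binom{n-3}{k-2}$. So the first step is to extract from that argument (or re-run the relevant shifting/kernel estimate) the statement that there is some $T=\{u,v,w\}\in\binom{[n]}{3}$ with $|\hf(\bar u)\cap\hf(\bar v)\cap\hf(\bar w)|$, i.e.\ $|\hf\setminus\hf_{uvw}^*|$, controlled. The natural way to pin down the quantitative bound \eqref{ineq4-1} is a counting/averaging argument: after shifting we may assume $\hf$ is compressed, pick the triple achieving the maximal ``co-degree mass'', and bound the leftover sets $F\in\hf$ with $|F\cap T|\le 1$ by the fact that each such $F$ must intersect every member of the large star-like part of $\hf$; since $\gamma(\hf)$ is large, that star-like part is small, which forces those leftover $F$ to live in a family of size at most roughly $\tfrac{d\alpha}{2}\binom{n-d+3}{k-d+3}$. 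The appearance of $d$ and of $\binom{n-d+3}{k-d+3}$ strongly suggests the mechanism is: an intersecting family with a ``small kernel'' has at most $O\!\left(\binom{n-d+3}{k-d+3}\right)$ members once we delete a $d$-ish-element core, and the slack $\alpha\binom{n-3}{k-2}$ pays for $d/2$ such blocks — so I would make this precise via the bound $|\hf|\le\Delta(\hf)+\gamma(\hf)$ together with an iterated ``peeling'' of the $d-3$ most popular elements outside $T$.

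The second step is to derive \eqref{ineq4-2} from \eqref{ineq4-1}. Here the idea is a simple inclusion–exclusion on the four-block partition of $\binom{[n]}{k}$ by the value $|F\cap T|\in\{0,1,2,3\}$. We have $|\hf_{uvw}| = 3\binom{n-3}{k-2}$, and $\hf_{uvw}\setminus\hf$ splits according to whether the missing set lies above or below the ``diversity line''. Writing $|\hf|=|\hf\cap\hf_{uvw}| + |\hf\cap\hf_{uvw}^{*}\setminus\hf_{uvw}| + |\hf\setminus\hf_{uvw}^{*}|$ (the middle term being members with $|F\cap T|=3$), and using both the lower bound on $|\hf|$ coming from $\gamma(\hf)=(1-\alpha)\binom{n-3}{k-2}$ and $\Delta(\hf)\le\binom{n-1}{k-1}$-type estimates, one gets $|\hf_{uvw}\setminus\hf|\le |\hf_{uvw}| - |\hf\cap\hf_{uvw}| \le 3\alpha\binom{n-3}{k-2} + (\text{correction})$, where the correction is exactly the contribution of the sets of $\hf$ that hang outside $\hf_{uvw}^{*}$ plus those with $|F\cap T|=3$; bounding both by \eqref{ineq4-1} (the $|F\cap T|=3$ sets also number at most $O(\binom{n-3}{k-3})\ll\binom{n-d+3}{k-d+3}\cdot d$ for the chosen $d$, $n$) yields the stated $\tfrac{3d\alpha}{2}\binom{n-d+3}{k-d+3}$ term.

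I expect the main obstacle to be step one — getting the clean, \emph{linear}-in-$n$ hypothesis $n\ge dk/(1-\alpha)$ to suffice for the quantitative leftover bound, rather than the polynomial or exponential thresholds one gets from naive sunflower arguments. The delicate point is that $\alpha$ is allowed to approach $1$, so $\binom{n-3}{k-2}-\gamma(\hf)$ can be as small as a single block's worth, and one must ensure the ``peeling of $d-3$ popular elements'' really does capture all but $\le\tfrac{d\alpha}{2}\binom{n-d+3}{k-d+3}$ sets, uniformly. This will likely require: (i) a careful choice of which $d-3$ elements to peel (greedily by degree within $\hf\setminus\hf_{uvw}^{*}$), (ii) the standard estimate that after removing an intersecting family's most popular elements its size drops geometrically, and (iii) checking the arithmetic $\binom{n-j}{k-j}\le \tfrac{k}{n-k}\binom{n-j+1}{k-j+1}\le\cdots$ telescopes correctly under $n\ge dk/(1-\alpha)$ so that the geometric tail sums to the claimed constant $d/2$. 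Comparisons of binomial coefficients of the form $\binom{n-d+3}{k-d+3}$ vs.\ $\binom{n-3}{k-2}$ will be routine but must be tracked to land the constants $3$ and $3d/2$ exactly; I would defer those to a short lemma on ratios of binomial coefficients under the linear constraint on $n$.
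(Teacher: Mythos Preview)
Your plan has a genuine gap in step one: the mechanism you describe is not the one that produces the bound \eqref{ineq4-1}, and I do not see how to make your ``peeling'' argument yield it under the linear hypothesis.

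In the paper, the triple $T=\{u,v,w\}$ is found via Lemma~\ref{lem-key3} (essentially Theorem~\ref{thm-fw-3} plus Lemma~\ref{lem-key2}), which already gives crude bounds $|\hf(\{x\},T)|\le\binom{n-4}{k-3}$ and $|\hf(\emptyset,T)|\le\binom{n-7}{k-4}$. The quantitative refinement does \emph{not} come from peeling popular elements. It comes from the cross-intersecting pairs
\[
\bigl(\hf(\{u,v\},T),\ \hf(\{w\},T)\bigr),\quad \bigl(\hf(\{u,v\},T),\ \hf(\emptyset,T)\bigr),
\]
and their two cyclic analogues. The diversity hypothesis forces each $|\hf(\{y,z\},T)|>(d-3)\binom{n-4}{k-3}$, and then Lemma~\ref{lem-key} (Hilton's lemma combined with Sperner) applied with threshold $d-3$ gives
\[
g_x:=|\hf(\{x\},T)|\le \frac{\binom{n-d}{k-d+2}}{\binom{n-d}{k-2}}\,f_{yz},\qquad h:=|\hf(\emptyset,T)|\le \frac{\binom{n-d}{k-d+3}}{\binom{n-d}{k-2}}\,f_{yz},
\]
where $f_{yz}=\binom{n-3}{k-2}-|\hf(\{y,z\},T)|$. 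That is where the $\binom{n-d+3}{k-d+3}$ and the parameter $d$ actually enter: $d$ is the Hilton threshold, not a count of peeled vertices. Your proposed peeling is of the wrong object anyway, since $\hf(\{w\},T)$ need not be intersecting by itself --- only cross-intersecting with $\hf(\{u,v\},T)$ --- so removing its popular vertices does not give geometric decay on its own.

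Your logic ``$\gamma(\hf)$ large $\Rightarrow$ star-like part small $\Rightarrow$ leftover $F$ constrained'' is also inverted. Large diversity means $|\hf(\bar u)|$ is large, not that $\Delta(\hf)$ is small; the constraint on the sets with $|F\cap T|\le 1$ comes from their having to meet the \emph{large} two-point layers $\hf(\{y,z\},T)$, which is exactly the cross-intersecting input to Lemma~\ref{lem-key}.

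For step two, the correction term is not obtained by a global inclusion--exclusion. One uses $\gamma(\hf)\le|\hf(\bar x)|$ for each $x\in T$ to get $f_{yz}\le\alpha\binom{n-3}{k-2}+h+g_y+g_z$; summing the three and feeding in the cross-intersecting bounds above closes the loop, since $3h+2(g_u+g_v+g_w)\le\beta(f_{uv}+f_{uw}+f_{vw})$ with $\beta<1/6$. Then $h+g_u+g_v+g_w$ is bounded via the same ratios, and $|\hf_{uvw}\setminus\hf|=f_{uv}+f_{uw}+f_{vw}$ directly. The $|F\cap T|=3$ sets play no role in \eqref{ineq4-2}.
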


\begin{example}
Let $A_i\subset [4,n]$, $2\leq |A_i|<k$ and suppose that $A_i\cap A_j\neq \emptyset$, $1\leq i,j\leq 3$. Define $\hht=\hht_1\cup \hht_2\cup \hht_3$ where
\[
\hht_i =\left\{F\in \binom{[n]}{k}\colon F\cap [3]=[3]\setminus \{i\},\ F\cap A_i\neq \emptyset\right\}\cup \left\{F\in \binom{[n]}{k}\colon F\cap [3]=\{i\},\ A_i\subset F\right\}.
\]
It is easy to see that $\hf$ is intersecting. If $|A_1|=|A_2|=|A_3|=:\ell$ then
\[
\gamma(\hht) =\binom{n-3}{k-2}-\binom{n-3-\ell}{k-2}+2\binom{n-3-\ell}{k-\ell-1}
\]
and
\[
|\hht\setminus \hf_{123}^*| = 3\binom{n-3-\ell}{k-\ell-1},\ | \hf_{123}\setminus\hht| = 3\binom{n-3-\ell}{k-2}.
\]
It shows that \eqref{ineq4-1} and \eqref{ineq4-2} are not too far from best possible.
\end{example}

Assuming $n\geq \frac{k(k+4)}{1-\alpha}$ we can obtain a much stronger stability result.

\begin{cor}\label{cor-1}
Let $\hf\subset \binom{[n]}{k}$ be an intersecting family. If $\gamma(\hf)=\left(1-\alpha\right)\binom{n-3}{k-2}$ with $0\leq  \alpha<1$ and  $n\geq \max\left\{k+4,40\right\}\cdot \frac{k}{1-\alpha}$, then there exist $u,v,w\in [n]$ such that  $\hf\subset \hf_{uvw}^*$ and
\[
|\hf_{uvw}\setminus \hf| \leq 3\alpha\binom{n-3}{k-2}.
\]
\end{cor}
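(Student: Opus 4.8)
The plan is to bootstrap from Theorem \ref{thm-main2}, whose hypotheses are satisfied here with $d=36$ (and in fact with any $d\le k+4$, since $n\ge (k+4)k/(1-\alpha)$), so that we already have a triple $\{u,v,w\}$ with
\[
|\hf\setminus \hf_{uvw}^*|\le \frac{d\alpha}{2}\binom{n-d+3}{k-d+3}
\quad\text{and}\quad
|\hf_{uvw}\setminus \hf|\le 3\alpha\binom{n-3}{k-2}+\frac{3d\alpha}{2}\binom{n-d+3}{k-d+3}.
\]
The entire task is therefore to upgrade the first bound from ``small'' to ``zero'': once $\hf\subset\hf_{uvw}^*$, the second displayed inequality of the corollary follows because the extra term $\frac{3d\alpha}{2}\binom{n-d+3}{k-d+3}$ disappears. (Strictly, I would re-run the final counting step of the Theorem \ref{thm-main2} proof with $\hf\setminus\hf_{uvw}^*=\emptyset$, which directly gives $|\hf_{uvw}\setminus\hf|\le 3\alpha\binom{n-3}{k-2}$.)

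First I would fix $d$ to the largest admissible value, namely $d=k+3$ (so $d-3=k$), which makes $\binom{n-d+3}{k-d+3}=\binom{n-k}{0}=1$; the hypothesis $n\ge(k+4)\tfrac{k}{1-\alpha}$ is exactly what licenses $d=k+3$ in Theorem \ref{thm-main2}. Then the first bound reads $|\hf\setminus\hf_{uvw}^*|\le \frac{(k+3)\alpha}{2}$, a constant independent of $n$. Now suppose for contradiction that there is a set $F_0\in\hf$ with $|F_0\cap\{u,v,w\}|\le 1$. Since $\hf$ is intersecting, every member of $\hf$ meets $F_0$, so in particular every $G\in\hf$ with $|G\cap\{u,v,w\}|=2$ satisfies $G\cap(F_0\setminus\{u,v,w\})\neq\emptyset$. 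The set $F_0\setminus\{u,v,w\}$ has at most $k$ elements; counting the sets of $\hf_{uvw}$ that avoid it gives
\[
|\hf_{uvw}\cap\hf|\le 3\binom{n-3}{k-2}-3\binom{n-3-k}{k-2}+(\text{lower order}),
\]
while $|\hf_{uvw}\setminus\hf|$ is controlled by \eqref{ineq4-2}. Combining, $|\hf\cap\hf_{uvw}|$ is forced to be smaller than $(3-3\varepsilon)\binom{n-3}{k-2}$ for some fixed $\varepsilon>0$, whereas from $|\hf|\ge\gamma(\hf)+\Delta(\hf)$ and $\gamma(\hf)=(1-\alpha)\binom{n-3}{k-2}$ together with $\Delta(\hf)\le\binom{n-1}{k-1}$ one gets a lower bound on $|\hf\cap\hf_{uvw}^*|$; adding the constant bound on $|\hf\setminus\hf_{uvw}^*|$ and the estimate on $|\hf_{uvw}^*\setminus\hf_{uvw}|$ produces the contradiction once $n\ge c\,k/(1-\alpha)$ for the explicit constant $c=\max\{k+4,40\}$.

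The main obstacle is the ``no bad $F_0$'' argument: it is not enough to know $|\hf\setminus\hf_{uvw}^*|$ is a constant; I must show it is literally zero. The delicate point is that a single set $F_0$ with $|F_0\cap\{u,v,w\}|\le 1$ removes a constant-times-$\binom{n-3}{k-2}$ chunk from $\hf_{uvw}\cap\hf$ only if $F_0\cap[n]\setminus\{u,v,w\}$ is genuinely ``small'' (of size $<k$), which it is since $|F_0|=k$ and we may assume $F_0$ meets $\{u,v,w\}$ at most once; but one has to be careful that the losses from $F_0$ and the losses already accounted for in \eqref{ineq4-1}--\eqref{ineq4-2} are not double-counted, and that the ``lower order'' binomial terms are genuinely dominated when $n$ is only \emph{linear} in $k$. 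I would handle this by using the monotonicity $\binom{n-3-k}{k-2}\ge(1-\tfrac{k}{n-k})^{k-2}\binom{n-3}{k-2}$ and the hypothesis $n\ge(k+4)k/(1-\alpha)\ge(k+4)k$ to get $\binom{n-3-k}{k-2}\ge(1-\tfrac1{k+3})^{k-2}\binom{n-3}{k-2}\ge e^{-1}\binom{n-3}{k-2}$, which gives a clean fixed multiplicative gap $\varepsilon$ and closes the contradiction; the role of the $40$ in $\max\{k+4,40\}$ is to absorb the $\alpha$-dependent error terms inherited from Theorem \ref{thm-main2} uniformly in $\alpha<1$.
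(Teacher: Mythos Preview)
You have missed the paper's one-line observation. The hypothesis $n\ge \max\{k+4,40\}\cdot\frac{k}{1-\alpha}$ allows you to apply Theorem~\ref{thm-main2} with $d=\max\{k+4,40\}$ itself (not $d=k+3$). Since then $d\ge k+4$, the lower index satisfies $k-d+3\le -1<0$, so by the standard convention $\binom{n-d+3}{k-d+3}=0$. Both error terms in \eqref{ineq4-1} and \eqref{ineq4-2} vanish and the corollary is immediate: $|\hf\setminus\hf_{uvw}^*|\le 0$ gives $\hf\subset\hf_{uvw}^*$, and $|\hf_{uvw}\setminus\hf|\le 3\alpha\binom{n-3}{k-2}$. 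Your assertion that ``the largest admissible value [is] $d=k+3$'' is simply off by one; nothing in the statement of Theorem~\ref{thm-main2} imposes an upper bound on $d$, and the requirement $d\ge 36$ is handled by the $40$ in the maximum.

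Your alternative contradiction route, besides being unnecessary, has a genuine gap as written. You estimate $\binom{n-3-k}{k-2}\ge e^{-1}\binom{n-3}{k-2}$ using only $n\ge(k+4)k$, throwing away the factor $1/(1-\alpha)$ in the hypothesis. The contradiction you seek is (essentially) $\binom{n-3-k}{k-2}>\alpha\binom{n-3}{k-2}$, since the presence of $F_0$ forces $|\hf_{uvw}\setminus\hf|\ge 3\binom{n-3-k}{k-2}$ while Theorem~\ref{thm-main2} gives $|\hf_{uvw}\setminus\hf|\lesssim 3\alpha\binom{n-3}{k-2}$. With only the bound $e^{-1}$ on the left, this fails for every $\alpha>e^{-1}$, and the constant $40$ cannot repair it: the discrepancy is in the main term, not a lower-order error. (A salvage is possible---keep the $\alpha$-dependence via $n-k\ge k(k+3)/(1-\alpha)$ and use Bernoulli to get $\binom{n-3-k}{k-2}\ge\bigl(1-\tfrac{(k-2)(1-\alpha)}{k+3}\bigr)\binom{n-3}{k-2}>\alpha\binom{n-3}{k-2}$---but this is far more work than just taking $d\ge k+4$.) Your appeal to ``$|\hf|\ge\gamma(\hf)+\Delta(\hf)$ together with $\Delta(\hf)\le\binom{n-1}{k-1}$'' to produce a \emph{lower} bound is also muddled: those give an upper bound on $|\hf|$, not a lower one.
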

\begin{proof}
Set $d=\max\left\{k+4,40\right\}$. Then $n\geq \frac{dk}{1-\alpha}$ and $d\geq 40$. Note that $d\geq k+4$ implies $\binom{n-d+3}{k-d+3}=0$. Thus the corollary follows from Theorem \ref{thm-main2}.
\end{proof}

\begin{cor}
Let $\hf\subset \binom{[n]}{k}$ be an intersecting family with $n> 36k$. If $|\hf|\geq |\hf_3|+1$, then
\[
\varrho(\hf) > 1- \max\left\{k+4,40\right\}\cdot \frac{k}{3n}.
\]
If $|\hf|\geq |\hf_3|+\frac{d}{2} \binom{n-d+3}{k-d+3}$ for some $d\geq 40$, then
\[
\varrho(\hf) > 1-  \frac{dk}{3n}.
\]
\end{cor}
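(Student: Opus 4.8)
The plan is to derive both statements directly from Corollary~\ref{cor-1} by translating the hypotheses on $|\hf|$ into hypotheses on $\gamma(\hf)$ and then unwinding the definitions of $\varrho$ and $\alpha$. First recall that $|\hf_3| = \binom{n-1}{k-1} - \binom{n-k-1}{k-1} + 1$ while by the discussion after the definition of $\hf_{\bigtriangleup}$ (and by Theorem~\ref{thm-fw-2}) the diversity satisfies $\gamma(\hf_3) = \gamma(\hf_{\bigtriangleup}) = \binom{n-3}{k-2}$. Since $\gamma(\hf) = |\hf| - \Delta(\hf)$ and $\Delta(\hf) \le \Delta(\hf_{k+1}) = \binom{n-1}{k-1} - \binom{n-k-1}{k-1}$ whenever $\hf$ is not a star (the bound noted right after Theorem~\ref{thm-frankl}), I would first argue that an intersecting family with $|\hf| \ge |\hf_3| + 1 > |\hf_{k+1}|$ cannot be a star (by Hilton--Milner, Theorem~\ref{thm-hm}, using $|\hf_3| \ge |\hf_{k+1}|$ is false in general, so instead: a star has diversity $0$, whereas we will show $\gamma(\hf) > 0$, which already rules this out). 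Concretely, $|\hf| \ge |\hf_3| + 1$ gives $\gamma(\hf) = |\hf| - \Delta(\hf) \ge |\hf_3| + 1 - \Delta(\hf)$; combined with $\Delta(\hf) \le \binom{n-1}{k-1} - \binom{n-k-1}{k-1} = |\hf_3| - 1$ this yields $\gamma(\hf) \ge 2 > 0$, hence in the notation of Corollary~\ref{cor-1} we have $\alpha < 1$, and in fact $\gamma(\hf) \ge \binom{n-3}{k-2} - \gamma(\hf_3) + \gamma(\hf)$... more cleanly: writing $|\hf| \ge |\hf_3|+1$ and $|\hf| = \gamma(\hf) + \Delta(\hf) \le \gamma(\hf) + |\hf_3| - 1$ we get $\gamma(\hf) \ge 2$, but the point I actually need is the \emph{lower} bound forcing $1 - \alpha$ to be bounded below.

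The key computation is the following. Given $|\hf| \ge |\hf_3| + 1$, I claim $\gamma(\hf) \ge \binom{n-3}{k-2}$ \emph{need not} hold, so instead I work with whatever value $\alpha$ takes and feed it into Corollary~\ref{cor-1}. Actually the corollary's hypothesis only needs $n \ge \max\{k+4,40\}\cdot \tfrac{k}{1-\alpha}$, which for fixed $n > 36k$ becomes a lower bound on $1-\alpha$, and the conclusion $|\hf_{uvw}\setminus\hf| \le 3\alpha\binom{n-3}{k-2}$ combined with $|\hf| \le |\hf_{uvw}^*| = |\hf_{uvw}| + \binom{n-3}{k-3}$... Let me restructure: from $\hf \subset \hf_{uvw}^*$ and $|\hf_{uvw}\setminus\hf| \le 3\alpha\binom{n-3}{k-2}$ we get $|\hf| \ge |\hf_{uvw}^*| - 3\alpha\binom{n-3}{k-2}$. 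Using $|\hf_{uvw}^*| = 3\binom{n-3}{k-2} + \binom{n-3}{k-3} = |\hf_3| - \binom{n-3}{k-2} + \text{(small terms)}$, and the elementary identity $|\hf_3| = |\hf_{\bigtriangleup}| + |\hf_{\bigtriangleup}\text{-complement in }\hf_3|$... The honest plan: bound $|\hf|$ above via $|\hf| \le \Delta(\hf) + \gamma(\hf) = \Delta(\hf) + (1-\alpha)\binom{n-3}{k-2}$ and below by the hypothesis, then since $\Delta(\hf) = |\hf| - \gamma(\hf)$ we get $\varrho(\hf) = \Delta(\hf)/|\hf| = 1 - \gamma(\hf)/|\hf| = 1 - (1-\alpha)\binom{n-3}{k-2}/|\hf|$. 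Now $|\hf| \ge |\hf_3| + 1 \ge 3\binom{n-3}{k-2}$ (since $|\hf_3| \ge |\hf_{\bigtriangleup}| = 3\binom{n-3}{k-2}$), giving $\varrho(\hf) \ge 1 - \tfrac{1-\alpha}{3}$. It remains to show $1-\alpha < \max\{k+4,40\}\cdot\tfrac{k}{n}$; this is exactly where $|\hf| \ge |\hf_3|+1$ enters: we must verify $\gamma(\hf) < \binom{n-3}{k-2}$, i.e. $\alpha > 0$, is \emph{not} what we want — rather we need $1-\alpha$ small, equivalently $\gamma(\hf)$ small relative to $\binom{n-3}{k-2}$, which follows because $|\hf| \ge |\hf_3|+1$ combined with Theorem~\ref{thm-fw-2}'s statement $\gamma(\hf) \le \binom{n-3}{k-2}$ forces... no. The clean route: Corollary~\ref{cor-1} requires $n \ge \max\{k+4,40\}\frac{k}{1-\alpha}$; rearranged, this says $1-\alpha \ge \max\{k+4,40\}\frac{k}{n}$ is the \emph{failure} case. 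So I split: if $1-\alpha \ge \max\{k+4,40\}\frac{k}{n}$, apply Corollary~\ref{cor-1} to get $\hf \subset \hf_{uvw}^*$, whence $|\hf| \le |\hf_{uvw}^*| \le 3\binom{n-3}{k-2} + \binom{n-3}{k-3}$, and together with $|\hf| \ge |\hf_3|+1$ one checks $|\hf_3| + 1 > 3\binom{n-3}{k-2}+\binom{n-3}{k-3}$ for $n > 36k$ — a contradiction — so this case is vacuous. Hence $1-\alpha < \max\{k+4,40\}\frac{k}{n}$, and then $\varrho(\hf) = 1 - \gamma(\hf)/|\hf| = 1 - (1-\alpha)\binom{n-3}{k-2}/|\hf| > 1 - \tfrac{1}{3}(1-\alpha) > 1 - \tfrac{1}{3}\max\{k+4,40\}\frac{k}{n}$, where I used $|\hf| \ge 3\binom{n-3}{k-2}$.

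For the second statement the argument is identical with $\max\{k+4,40\}$ replaced by $d$: if $1-\alpha \ge \frac{dk}{n}$ then $n \ge \frac{dk}{1-\alpha}$ so Theorem~\ref{thm-main2} applies and gives $|\hf\setminus\hf_{uvw}^*| \le \tfrac{d\alpha}{2}\binom{n-d+3}{k-d+3}$, whence $|\hf| \le |\hf_{uvw}^*| + \tfrac{d\alpha}{2}\binom{n-d+3}{k-d+3} \le 3\binom{n-3}{k-2} + \binom{n-3}{k-3} + \tfrac{d}{2}\binom{n-d+3}{k-d+3}$, contradicting the hypothesis $|\hf| \ge |\hf_3| + \tfrac{d}{2}\binom{n-d+3}{k-d+3}$ once one checks $|\hf_3| > 3\binom{n-3}{k-2} + \binom{n-3}{k-3}$ for $n > 36k$. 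So again $1-\alpha < \frac{dk}{n}$ and $\varrho(\hf) > 1 - \tfrac13(1-\alpha) > 1 - \frac{dk}{3n}$ using $|\hf| \ge |\hf_3| \ge 3\binom{n-3}{k-2}$.

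The main obstacle I anticipate is pinning down the inequality $|\hf_3| > 3\binom{n-3}{k-2} + \binom{n-3}{k-3}$ (equivalently $|\hf_3| - |\hf_{\bigtriangleup}| > \binom{n-3}{k-3}$) and verifying it holds throughout the range $n > 36k$, $k \ge 3$ — this is the binomial-coefficient estimate that makes the "vacuous case" argument go through, and getting the constant $36$ (rather than something larger) to suffice requires care. Everything else is bookkeeping: the identity $\varrho(\hf) = 1 - \gamma(\hf)/|\hf|$, the bound $|\hf| \ge |\hf_3| \ge |\hf_{\bigtriangleup}| = 3\binom{n-3}{k-2}$, and the trivial rearrangement of the hypothesis of Corollary~\ref{cor-1} / Theorem~\ref{thm-main2} into the dichotomy on the size of $1-\alpha$.
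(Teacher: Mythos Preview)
Your approach is exactly the paper's: write $\gamma(\hf)=(1-\alpha)\binom{n-3}{k-2}$, use the contrapositive of Corollary~\ref{cor-1} (resp.\ Theorem~\ref{thm-main2}) to force $1-\alpha$ small, then plug into $\varrho(\hf)=1-\gamma(\hf)/|\hf|>1-(1-\alpha)/3$ via $|\hf|\ge |\hf_3|>3\binom{n-3}{k-2}$.

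One correction will streamline everything and remove what you flag as ``the main obstacle'': the quantity $|\hf_3|$ is \emph{equal} to $3\binom{n-3}{k-2}+\binom{n-3}{k-3}=|\hf_{uvw}^*|$, not strictly larger. Indeed
\[
|\hf_3|=\binom{n-1}{k-1}-\binom{n-3}{k-1}+\binom{n-3}{k-2}
=\binom{n-2}{k-2}+2\binom{n-3}{k-2}
=3\binom{n-3}{k-2}+\binom{n-3}{k-3}.
\]
So in the first statement the contradiction is simply $|\hf|\ge |\hf_3|+1>|\hf_{uvw}^*|$, with nothing to verify. In the second statement your chain $|\hf|\le |\hf_{uvw}^*|+\tfrac{d\alpha}{2}\binom{n-d+3}{k-d+3}\le |\hf_3|+\tfrac{d}{2}\binom{n-d+3}{k-d+3}$ loses strictness by replacing $\alpha$ with $1$; since $|\hf_3|=|\hf_{uvw}^*|$, the strict inequality you need comes instead from $\alpha<1$, giving $|\hf|\le |\hf_3|+\tfrac{d\alpha}{2}\binom{n-d+3}{k-d+3}<|\hf_3|+\tfrac{d}{2}\binom{n-d+3}{k-d+3}$. (You should also dispose of $\alpha=1$ at the outset: then $\gamma(\hf)=0$ and $\varrho(\hf)=1$.)
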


\begin{proof}
Define $\alpha$ by $\gamma(\hf)=\left(1-\alpha\right)\binom{n-3}{k-2}$. By Theorem \ref{thm-fw-2} we infer that $0\leq \alpha\leq 1$.
If $\alpha=1$ then $\varrho(\hf) =1$. Thus we may assume $0\leq \alpha<1$.

Note that $|\hf|\geq |\hf_3|+1$ implies that   $\hf\not\subset \hf_{uvw}^*$ for any $u,v,w\in [n]$.   Then by Corollary \ref{cor-1} it is only possible if
\[
n<\max\left\{k+4,40\right\}\cdot \frac{k}{1-\alpha}.
\]
It follows that
\[
1-\alpha <\max\left\{k+4,40\right\}\cdot \frac{k}{n}.
\]
Thus,
\[
\varrho(\hf) =1-\frac{\gamma(\hf)}{|\hf|}\geq 1-\frac{\left(1-\alpha\right)\binom{n-3}{k-2}}{|\hf_3|} > 1-\frac{1-\alpha}{3} > 1- \max\left\{k+4,40\right\}\cdot \frac{k}{3n}.
\]

If $|\hf|\geq |\hf_3|+\frac{d}{2} \binom{n-d+3}{k-d+3}$ for some $d\geq 40$, then for any $u,v,w\in [n]$ and $\alpha<1$ we have
\[
|\hf\setminus \hf_{uvw}^*| >  \frac{d\alpha}{2} \binom{n-d+3}{k-d+3}.
\]
By Theorem \ref{thm-main2}, it is only possible when $n< \frac{dk}{1-\alpha}$. It follows that $\alpha >1-\frac{dk}{n}$. Thus,
\[
\varrho(\hf) =1-\frac{\gamma(\hf)}{|\hf|}\geq 1-\frac{\left(1-\alpha\right)\binom{n-3}{k-2}}{|\hf_3|} > 1-\frac{1-\alpha}{3} > 1-  \frac{dk}{3n}.
\]

\end{proof}

The proofs are heavily relying on the following recent result concerning $\varrho(\hf)$.

\begin{thm}[\cite{FW2022}]\label{thm-fw-3}
Suppose that $\hf\subset \binom{[n]}{k}$ is an intersecting family with $|\hf|\geq 36\binom{n-3}{k-3}$ and  $n\geq 24k$. Then \begin{align}\label{ineq-2}
\varrho(\hf)>\frac{2}{3}-\frac{k}{n}.
\end{align}
\end{thm}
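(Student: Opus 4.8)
The plan is to combine a two–step degree peeling with a cross–intersecting structure theorem, set up as an induction on $k$.

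\emph{Set-up.} A star has $\varrho=1$, so I would assume $\hf$ is not a star. Choose an element $1$ of maximum degree, then an element $2$ of maximum degree in the intersecting family $\hf(\bar 1)$ on $[2,n]$ (still with $n-1\ge 2k$). With $\hc=\{F\in\hf:\{1,2\}\subseteq F\}$ put
\[
\ha=\{F\setminus\{1\}:1\in F\in\hf,\ 2\notin F\},\quad \hb=\{F\setminus\{2\}:2\in F\in\hf,\ 1\notin F\},\quad \hd=\{F\in\hf:F\cap\{1,2\}=\emptyset\};
\]
then $\ha,\hb$ are cross-intersecting $(k-1)$-uniform families on $[3,n]$, $|\hf|=|\hc|+|\ha|+|\hb|+|\hd|$, $\Delta(\hf)=|\hc|+|\ha|$, $\gamma(\hf)=|\hb|+|\hd|$, and maximality of $\deg(1)$ forces $|\ha|\ge|\hb|$ (since $\deg(2)=|\hc|+|\hb|$). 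The target $\varrho(\hf)>\tfrac23-\tfrac kn$ is just $|\hb|+|\hd|<(\tfrac13+\tfrac kn)|\hf|$.

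\emph{The engine.} The core step would be a dichotomy for cross-intersecting $(k-1)$-uniform families $\ha\supseteq\hb$ on an $m$-set, $m\ge 24(k-1)$: either (i) after discarding few sets, $\hb$ lies in the star of a single point $x$, or (ii) $|\hb|$ is of the small order $\binom{m-2}{k-3}$. I would prove this by induction on $k$ (base $k=3$: $\ha,\hb$ are graphs, treated directly), carrying along a cross-intersecting analogue of Theorem~\ref{thm-fw-2}, and also apply it to the pair $(\hd,\ha)$, using that $\hd$ is intersecting and that every member of $\hd$ meets all of $\ha\cup\hb$. In case (ii) the quantity at stake ($|\hb|$ or $|\hd|$) is tiny against $|\hf|\ge 36\binom{n-3}{k-3}$, so one may assume case (i) throughout. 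Then all but an $o(|\hf|)$ fraction of $\hf\setminus\hc$ passes through a common point $x$, so $\deg_\hf(x)\ge|\ha|+|\hb|+|\hd|-o(|\hf|)$ and hence $\gamma(\hf)\le|\hc|+o(|\hf|)$; combining with $|\ha|\ge|\hb|$ and with $\gamma(\hf)=|\hb|+|\hd|\le\binom{n-3}{k-2}$ (Theorem~\ref{thm-fw-2}) gives $|\hf|\ge 3\gamma(\hf)-o(|\hf|)$. If instead $|\hc|$ is so large that this bound is lossy, then $\deg(1)=|\hc|+|\ha|$ is already close to $|\hf|$ and $\varrho(\hf)$ is near $1$. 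In every case $\varrho(\hf)\ge\tfrac23-o(1)$, and weighing the $o(\cdot)$ terms against the slack in $n\ge 24k$ should upgrade this to the strict $\varrho(\hf)>\tfrac23-\tfrac kn$; the equality characterisation of Theorem~\ref{thm-fw-2} then identifies the boundary family $\hf_{\bigtriangleup}$ ($\varrho=\tfrac23$) as the single extremal case.

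\emph{Main obstacle.} Everything rests on the cross-intersecting dichotomy and its self-referential use, and that is where I expect the real work to be. One must (a) prove it with the sharp threshold $|\hb|=O\big(\binom{m-2}{k-3}\big)$, not merely the star bound $\binom{m-1}{k-2}$, which would be useless since $|\hf|$ is itself only of order $\binom{n-3}{k-3}$; (b) handle the "few discarded sets" in case (i) so they do not wreck the degree count — when $\hb$ is a proper sub-star one is forced to track two heavy points $x,y$ rather than one; and (c) calibrate the numerical constants so the induction on $k$ closes under the linear hypotheses $|\hf|\ge 36\binom{n-3}{k-3}$ and $n\ge 24k$, which is presumably how the constants $36$ and $24$ arise. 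Theorem~\ref{thm-fw-2} (to cap $\gamma(\hf)$ from above) and the statement itself at uniformity $k-1$ on the $(n-2)$-set (as inductive hypothesis) are the levers that make the bookkeeping tractable.
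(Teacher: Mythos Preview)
This theorem is not proved in the present paper at all: it is quoted from \cite{FW2022} and used as a black box (see the sentence ``The proofs are heavily relying on the following recent result concerning $\varrho(\hf)$'' immediately preceding it, and its invocations in Claim~\ref{claim-1}, the claim after Corollary~\ref{cor-3.3}, and Lemma~\ref{lem-key3}). There is therefore no proof in this paper to compare your proposal against.

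As for the proposal itself, it is not a proof but a strategic outline with the decisive step explicitly left open. Your ``engine'' is a dichotomy for cross-intersecting $(k-1)$-uniform families that you do not prove; you yourself flag that (a) the threshold must be $O\bigl(\binom{m-2}{k-3}\bigr)$ rather than the trivial star bound, (b) the discarded sets in case~(i) may split the heavy mass between two points, and (c) the constants have to close an induction on $k$. None of these are resolved, and the passage from ``$\varrho(\hf)\ge\tfrac23-o(1)$'' to the strict inequality with the explicit $k/n$ term is handled only by the phrase ``weighing the $o(\cdot)$ terms against the slack in $n\ge 24k$ should upgrade this''. In particular, your appeal to Theorem~\ref{thm-fw-2} to cap $\gamma(\hf)$ is circular in spirit, since in \cite{FW2022} the diversity bound and the present $\varrho$-bound are developed together; and your use of ``the statement itself at uniformity $k-1$'' as an inductive hypothesis would require checking that $|\hf(\bar 1)|\ge 36\binom{n-4}{k-4}$ and $n-1\ge 24(k-1)$, neither of which is guaranteed by the stated hypotheses on $\hf$. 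So even setting aside the comparison question, the proposal has a genuine gap at its core lemma.
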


The paper is organized as follows. In Section 2, we summarize some further tools for the proof. In Section 3 we prove Theorem \ref{thm-main}. In Section 4 we prove Theorem  \ref{thm-main2}. In Section 5 we present some related open problems.

\section{Some further tools}

%
In this section we recall some results that are needed in the proof.

For $\hf\subset \binom{[n]}{k}$ and $0\leq \ell<k$, define
\[
\partial^{(\ell)}\hf =\left\{E\in \binom{[n]}{\ell}\colon \mbox{ there exists }F\in \hf \mbox{ such that }E\subset F\right\}.
\]

\begin{prop}[\cite{Sperner}]\label{sperner}
For $\hf\subset \binom{[n]}{k}$ and $0\leq \ell\leq k$,
\begin{align}\label{ineq-sperner}
\frac{|\partial^{(\ell)}\hf|}{\binom{n}{\ell}} \geq \frac{|\hf|}{\binom{n}{k}}.
\end{align}
\end{prop}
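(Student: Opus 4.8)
The plan is to prove \eqref{ineq-sperner} by a double-counting argument on the incidences between the $\ell$-shadow and $\hf$ itself, with no recourse to Kruskal--Katona. First I would dispose of the boundary cases: when $\ell=k$ both sides equal $|\hf|/\binom{n}{k}$, and when $\ell=0$ the left-hand side is $1$ (assuming $\hf\neq\emptyset$; otherwise both sides vanish and there is nothing to prove) while the right-hand side is at most $1$ by the trivial bound $|\hf|\leq\binom{n}{k}$. So from now on assume $0<\ell<k$.

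Consider the set $\mathcal{I}$ of incident pairs $(E,F)$ with $E\in\partial^{(\ell)}\hf$, $F\in\hf$ and $E\subset F$. Counting $\mathcal{I}$ by first fixing $F\in\hf$: each such $F$ contains exactly $\binom{k}{\ell}$ subsets of size $\ell$, and every one of them lies in $\partial^{(\ell)}\hf$ by definition, so $|\mathcal{I}|=|\hf|\binom{k}{\ell}$. Counting $\mathcal{I}$ by first fixing $E\in\partial^{(\ell)}\hf$: the number of $F\in\hf$ with $E\subset F$ is at most the number of $k$-subsets of $[n]$ containing $E$, namely $\binom{n-\ell}{k-\ell}$; hence $|\mathcal{I}|\leq|\partial^{(\ell)}\hf|\binom{n-\ell}{k-\ell}$.

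Comparing the two counts gives $|\partial^{(\ell)}\hf|\geq\binom{k}{\ell}|\hf|\big/\binom{n-\ell}{k-\ell}$. To finish I would invoke the standard identity $\binom{n}{k}\binom{k}{\ell}=\binom{n}{\ell}\binom{n-\ell}{k-\ell}$ (count the pairs consisting of an $\ell$-subset sitting inside a $k$-subset of $[n]$ in the two obvious orders), which rearranges to $\binom{k}{\ell}\big/\binom{n-\ell}{k-\ell}=\binom{n}{\ell}\big/\binom{n}{k}$; substituting this yields exactly $|\partial^{(\ell)}\hf|\big/\binom{n}{\ell}\geq|\hf|\big/\binom{n}{k}$, as claimed.

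I do not expect a genuine obstacle here: the whole argument is a single double count plus one binomial identity. The only points demanding (minor) care are making sure the bound $\binom{n-\ell}{k-\ell}$ on the number of extensions of a fixed $E$ is used as an inequality rather than an equality --- it becomes an equality precisely when $\hf=\binom{[n]}{k}$, which is exactly why the normalized matching inequality is in general not tight --- and keeping the binomial bookkeeping straight so that the ratio $\binom{k}{\ell}/\binom{n-\ell}{k-\ell}$ is not accidentally inverted.
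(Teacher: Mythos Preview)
Your argument is correct and is in fact the classical double-counting proof of Sperner's normalized matching inequality. Note, however, that the paper does not supply its own proof of this proposition: it is quoted as a known result with a citation to \cite{Sperner}, so there is no in-paper argument to compare against. Your write-up is exactly the standard derivation one would expect here, and the two points you flag (using the extension count as an upper bound, and the identity $\binom{n}{k}\binom{k}{\ell}=\binom{n}{\ell}\binom{n-\ell}{k-\ell}$) are the only places where care is needed.
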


 We  need the  following form of Sperner's bound (Proposition \ref{sperner}).

 \begin{cor}\label{cor-key}
 Let $n\geq a+b$ and let $\ha\subset \binom{[n]}{a}$, $\hb\subset \binom{[n]}{b}$ be cross-intersecting families. Then
 \begin{align}\label{ineq-sperner2}
 \frac{|\ha|}{\binom{n}{a}}+ \frac{|\hb|}{\binom{n}{b}}\leq 1.
 \end{align}
 \end{cor}
 \begin{proof}
 Let $\ha^{c}=\{[n]\setminus A\colon A\in \ha\}$. Since $\ha,\hb$ are cross-intersecting, $\partial^{(b)} \ha^{c}\cap \hb=\emptyset$. It follows that
 \[
 |\partial^{(b)} \ha^{c}|+|\hb|\leq \binom{n}{b}.
 \]
 Note that $n\geq a+b$ implies $n-a\geq b$. By Sperner's bound (Proposition \ref{sperner}), we have
 \[
 \frac{|\partial^{(b)} \ha^{c}|}{\binom{n}{b}} \geq \frac{|\ha^{c}|}{\binom{n}{n-a}}=  \frac{|\ha|}{\binom{n}{a}}.
 \]
 Thus \eqref{ineq-sperner2} follows.
 \end{proof}

Recall the {\it lexicographic order} $A <_{L} B$ for $A,B\in \binom{[n]}{k}$ defined by, $A<_L B$ iff $\min\{i\colon i\in A\setminus B\}<\min\{i\colon i\in B\setminus A\}$. For $n>k>0$ and $\binom{n}{k}\geq m>0$ let $\hl(n,k,m)$ denote the first $m$ sets $A\in \binom{[n]}{k}$ in the lexicographic order.

Two families $\ha\subset \binom{[n]}{a}$ and $\hb\subset \binom{[n]}{b}$ are called {\it cross-intersecting} if $A\cap B \neq \emptyset$ for all $A\in \ha$, $B\in \hb$.

{\noindent\bf Hilton's Lemma (\cite{Hilton}).} Let $n,a,b$ be positive integers, $n\geq a+b$. Suppose that $\ha\subset \binom{[n]}{a}$ and $\hb\subset \binom{[n]}{b}$ are cross-intersecting. Then $\hl(n,a,|\ha|)$ and $\hl(n,b,|\hb|)$ are cross-intersecting as well.

The following  lemma is an easy consequence of Hilton's Lemma.

\begin{lem}\label{lem-key0}
Let $\ha\subset\binom{[n]}{a}$, $\hb\subset \binom{[n]}{b}$ be cross-intersecting families with $n\geq a+b$. If $|\ha|\geq \binom{n-1}{a-1}+\binom{n-2}{a-1}+\ldots+\binom{n-d}{a-1}$, $d<b$, then $|\hb| \leq \binom{n-d}{b-d}$.
\end{lem}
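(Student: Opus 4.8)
The plan is to reduce the conclusion about $\hb$ to a statement purely about lexicographic families via Hilton's Lemma, and then evaluate a lex family of the prescribed size. First I would apply Hilton's Lemma: since $\ha$ and $\hb$ are cross-intersecting with $n\geq a+b$, the families $\hl(n,a,|\ha|)$ and $\hl(n,b,|\hb|)$ are also cross-intersecting, so it suffices to bound $|\hb|$ in the case $\ha=\hl(n,a,|\ha|)$, $\hb=\hl(n,b,|\hb|)$. The hypothesis $|\ha|\geq \binom{n-1}{a-1}+\binom{n-2}{a-1}+\dots+\binom{n-d}{a-1}$ then guarantees that $\hl(n,a,|\ha|)$ contains all $a$-sets meeting $[d]$, because the number of $a$-subsets of $[n]$ whose minimum element is $i$ is exactly $\binom{n-i}{a-1}$, and these are precisely the $a$-sets appearing (in lex order) before all $a$-sets contained in $[d+1,n]$. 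In other words, every $a$-set $A$ with $A\cap[d]\neq\emptyset$ lies in $\ha$.

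Next I would use this to constrain $\hb$. If some $B\in\hb$ satisfied $B\cap[d]=\emptyset$, then since $d<b\leq |B|$ we could pick an $a$-set $A\subset[d]$ (possible as $a\geq 1$; if $a>d$ one still finds an $a$-set $A$ with $A\cap[d]\neq\emptyset$ and $A\cap B=\emptyset$, using $n\geq a+b$ to have room outside $B$) with $A\cap B=\emptyset$, contradicting cross-intersection since $A\in\ha$. Hence every $B\in\hb$ meets $[d]$. But that only gives $|\hb|\leq \binom{n}{b}-\binom{n-d}{b}$, which is far weaker than claimed, so the real argument must push harder: I would instead directly identify which $B$ can occur. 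Because $\hb=\hl(n,b,|\hb|)$ is an initial segment, and because it must avoid every $a$-set of $\ha$, the tightest obstruction comes from the $a$-sets $\{1\},\{2\},\dots$ being "almost" in $\ha$; more precisely the correct claim is that every $B\in\hb$ must contain $[d]$, giving $|\hb|\leq\binom{n-d}{b-d}$. To see $[d]\subset B$: if $j\in[d]\setminus B$ for some $B\in\hb$, then consider the lex-smallest $a$-sets; one shows that an $a$-set $A$ disjoint from $B$ with $\min A = j'\leq d$ exists and lies in $\ha$ unless forced otherwise — this is exactly where the precise threshold $\binom{n-1}{a-1}+\dots+\binom{n-d}{a-1}$ is used, together with Hilton's Lemma telling us $\hb$ is a lex segment.

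The cleanest route, which I would actually write up, is this: in the lex setting, the cross-intersecting condition between initial segments $\hl(n,a,|\ha|)$ and $\hl(n,b,|\hb|)$ is governed by the "complementary" structure, and one checks that $\hl(n,a,m_0)$ with $m_0=\binom{n-1}{a-1}+\dots+\binom{n-d}{a-1}$ equals $\{A\in\binom{[n]}{a}: A\cap[d]\neq\emptyset\}$, whose cross-intersecting "dual" among $b$-sets is exactly $\{B\in\binom{[n]}{b}: [d]\subset B\}=\hl\big(n,b,\binom{n-d}{b-d}\big)$. Since enlarging $\ha$ only shrinks the allowable $\hb$, and since $|\ha|\geq m_0$, we get $|\hb|\leq\binom{n-d}{b-d}$. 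The main obstacle is verifying carefully that the lex initial segment of size $m_0$ is precisely the family of $a$-sets meeting $[d]$ and that its maximal cross-intersecting partner is the family of $b$-sets containing $[d]$; this is a routine but slightly delicate lex-order bookkeeping argument, and the condition $d<b$ is needed so that $\binom{n-d}{b-d}$ makes sense and the partner family is nonempty and lex-initial.
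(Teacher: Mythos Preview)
Your approach is correct and essentially identical to the paper's: reduce via Hilton's Lemma to lex initial segments, observe that $\hl(n,a,m_0)$ with $m_0=\sum_{i=1}^d\binom{n-i}{a-1}$ is exactly $\{A:A\cap[d]\neq\emptyset\}$, and conclude that every $B\in\hb$ must contain $[d]$. One simplification: you do not need the lex structure of $\hb$ for the last step---once $\{A:A\cap[d]\neq\emptyset\}\subset\ha$, for any $B\in\hb$ and any $j\in[d]\setminus B$ you can pick an $a$-set $A\subset[n]\setminus B$ with $j\in A$ (possible since $n-b\geq a$), giving $A\in\ha$ disjoint from $B$; this is exactly the paper's one-line argument and avoids the detour in your middle paragraph.
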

\begin{proof}
By Hilton's Lemma, we may assume that $\ha=\hl(n,a,|\ha|)$ and $\hb=\hl(n,b,|\hb|)$. Since $|\ha|\geq \binom{n-1}{a-1}+\binom{n-2}{a-1}+\ldots+\binom{n-d}{a-1}$,
\[
\left\{A\in \binom{[n]}{a}\colon A\cap [d]\neq\emptyset\right\} \subset \ha.
\]
It follows that $[d]\subset B$ for all $B\in \hb$. Thus $|\hb| \leq \binom{n-d}{b-d}$.
\end{proof}

Combining Hilton's Lemma and Sperner's bound, one can obtain a useful inequality for cross-intersecting families.

\begin{lem}\label{lem-key}
Let $\ha\subset\binom{[n]}{a}$, $\hb\subset \binom{[n]}{b}$ be cross-intersecting families with $n\geq a+b$. If $|\ha|\geq \binom{n-1}{a-1}+\binom{n-2}{a-1}+\ldots+\binom{n-d}{a-1}$, $d<b$, then
\[
|\ha|+\frac{\binom{n-d}{a}}{\binom{n-d}{b-d}}|\hb| \leq \binom{n}{a}.
\]
\end{lem}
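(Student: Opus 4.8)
The plan is to combine the structural information coming from Hilton's Lemma with the averaging bound of Sperner's Proposition \ref{sperner}. First I would invoke Hilton's Lemma to replace $\ha$ and $\hb$ by the initial segments $\hl(n,a,|\ha|)$ and $\hl(n,b,|\hb|)$ in the lexicographic order; these are still cross-intersecting and have the same sizes, so it suffices to prove the inequality for them. Next, exactly as in the proof of Lemma \ref{lem-key0}, the hypothesis $|\ha|\geq \binom{n-1}{a-1}+\cdots+\binom{n-d}{a-1}$ forces $\ha$ to contain every $a$-set meeting $[d]$, and hence $[d]\subset B$ for every $B\in\hb$. In particular $\hb$ is supported on the ground set $[d+1,n]$: writing $\hb' = \{B\setminus[d]\colon B\in\hb\}\subset\binom{[d+1,n]}{b-d}$, we have $|\hb'|=|\hb|$.

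The key point is then to bound $|\ha|$ in terms of $|\hb|$ via a cross-intersecting argument restricted to $[d+1,n]$. Consider $\ha'' = \{A\in\ha\colon A\cap[d]=\emptyset\}\subset\binom{[d+1,n]}{a}$. Since $\ha$ contains all $a$-sets meeting $[d]$, we have $|\ha| = \left(\binom{n}{a}-\binom{n-d}{a}\right) + |\ha''|$. Moreover $\ha''$ and $\hb'$ are cross-intersecting families on the $(n-d)$-element ground set $[d+1,n]$, with $a$-sets and $(b-d)$-sets respectively, and $(n-d)\geq a+(b-d)$ because $n\geq a+b$. Applying Corollary \ref{cor-key} on this ground set gives
\[
\frac{|\ha''|}{\binom{n-d}{a}} + \frac{|\hb'|}{\binom{n-d}{b-d}} \leq 1,
\]
that is, $|\ha''| \leq \binom{n-d}{a} - \frac{\binom{n-d}{a}}{\binom{n-d}{b-d}}|\hb|$. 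Substituting back,
\[
|\ha| = \binom{n}{a} - \binom{n-d}{a} + |\ha''| \leq \binom{n}{a} - \frac{\binom{n-d}{a}}{\binom{n-d}{b-d}}|\hb|,
\]
which rearranges to the claimed inequality $|\ha| + \frac{\binom{n-d}{a}}{\binom{n-d}{b-d}}|\hb| \leq \binom{n}{a}$.

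I do not anticipate a serious obstacle here; the lemma is essentially a packaging of three tools already in hand. The one point that needs care is the bookkeeping when passing to the restricted ground set $[d+1,n]$: one must check that the lower bound on $|\ha|$ genuinely forces \emph{all} $a$-sets meeting $[d]$ into $\ha$ (this is where the telescoping sum $\binom{n-1}{a-1}+\cdots+\binom{n-d}{a-1}$ counting exactly those sets is used), and that the residual part $\ha''$ remains cross-intersecting with $\hb'$ rather than with the untruncated $\hb$ — which is immediate since $[d]\subset B$ and $A''\cap[d]=\emptyset$ means $A''\cap B = A''\cap(B\setminus[d])$. The hypothesis $d<b$ is exactly what guarantees $b-d\geq 1$ so that $\hb'$ is a nonempty-dimensional family and the denominator $\binom{n-d}{b-d}$ is a positive binomial coefficient.
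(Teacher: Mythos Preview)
Your proof is correct and follows essentially the same approach as the paper: reduce via Hilton's Lemma to initial segments, deduce that $\ha$ contains all $a$-sets meeting $[d]$ and hence $[d]\subset B$ for every $B\in\hb$, then apply Corollary~\ref{cor-key} to the residual cross-intersecting pair $\ha(\overline{[d]})$ and $\hb([d])$ on the ground set $[d+1,n]$. The only cosmetic difference is that the paper writes the count of $a$-sets meeting $[d]$ as the telescoping sum $\binom{n-1}{a-1}+\cdots+\binom{n-d}{a-1}$ while you write it as $\binom{n}{a}-\binom{n-d}{a}$.
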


\begin{proof}
By Hilton's Lemma, assume that $\ha=\hl(n,a,|\ha|)$ and $\hb=\hl(n,b,|\hb|)$. Since $|\ha|\geq \binom{n-1}{a-1}+\binom{n-2}{a-1}+\ldots+\binom{n-d}{a-1}$,
\[
\left\{A\in \binom{[n]}{a}\colon A\cap [d]\neq\emptyset\right\} \subset \ha.
\]
It follows that
\[
|\ha|= \binom{n-1}{a-1}+\binom{n-2}{a-1}+\ldots+\binom{n-d}{a-1}+|\ha(\overline{[d]})| \mbox{ and }|\hb|=|\hb([d])|.
\]
 Now $\ha(\overline{[d]})$  and $\hb([d])$ are cross-intersecting. Applying \eqref{ineq-sperner2} we obtain that
\[
 \frac{|\ha(\overline{[d]})|}{\binom{n-d}{a}}+ \frac{|\hb([d])|}{\binom{n-d}{b-d}}\leq 1.
\]
Then
\[
|\ha(\overline{[d]})|+\frac{\binom{n-d}{a}}{\binom{n-d}{b-d}}|\hb|\leq \binom{n-d}{a}.
\]
Adding $\binom{n-1}{a-1}+\binom{n-2}{a-1}+\ldots+\binom{n-d}{a-1}$ on both sides, we get
\[
|\ha|+\frac{\binom{n-d}{a}}{\binom{n-d}{b-d}}|\hb| \leq \binom{n}{a}.
\]
\end{proof}

 We need the following result concerning the diversity of cross-intersecting families. In \cite{F2020}, it is stated that Lemma \ref{thm-fk} below holds under the condition $|\ha|,|\hb|\geq \frac{1}{2}\binom{m-1}{\ell-1}$ and $(m-1)>10(\ell-1)$. Actually, the same proof works also for $|\ha|,|\hb|\geq 5\binom{m-2}{\ell-2}$. For self-containedness, we repeat the proof in the Appendix.

\begin{lem}[\cite{F2020}]\label{thm-fk}
Let $m,\ell$ be integers, $m\geq 2\ell$, $\ell\geq 2$. Suppose that $\ha,\hb\subset \binom{[m]}{\ell}$ are cross-intersecting families with $|\ha|,|\hb|\geq 5\binom{m-2}{\ell-2}$. Then there exists $j\in [m]$ such that
\begin{align}\label{ineq-frankl2}
\max\left\{|\ha(\bar{j})|,|\hb(\bar{j})|\right\} \leq \binom{m-2}{\ell-2}.
\end{align}
\end{lem}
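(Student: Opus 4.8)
\textbf{Proof plan for Lemma~\ref{thm-fk}.}
The plan is to reduce to the shifted (left-compressed) case and then exploit Hilton's Lemma together with Sperner's bound (in the form of Corollary~\ref{cor-key}). First I would apply the standard shifting operators $S_{ij}$ to both $\ha$ and $\hb$ simultaneously; this preserves the sizes $|\ha|,|\hb|$ and the cross-intersecting property, and it can only decrease the quantities $|\ha(\bar j)|,|\hb(\bar j)|$ for the ``last'' coordinate, so it suffices to prove the statement for shifted families. Actually, rather than shifting, the cleaner route is to invoke Hilton's Lemma directly: replace $\ha$ by $\hl(m,\ell,|\ha|)$ and $\hb$ by $\hl(m,\ell,|\hb|)$, which are still cross-intersecting and have the same cardinalities; since the lexicographic families are left-compressed, it is enough to bound $|\ha(\bar j)|$ and $|\hb(\bar j)|$ for $j=1$, i.e.\ to show $\max\{|\ha(\bar 1)|,|\hb(\bar 1)|\}\le\binom{m-2}{\ell-2}$.

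Next I would analyze the two lexicographic families. Write $|\ha|=\sum_{t\in T_\ha}\binom{m-t}{\ell-1}$ via the cascade/lex representation; the key dichotomy is whether $1\in A$ for all $A\in\ha$ (equivalently $\ha(\bar 1)=\emptyset$) or not, and similarly for $\hb$. If both $\ha$ and $\hb$ consist only of sets containing $1$, then $\ha(\bar1)=\hb(\bar1)=\emptyset$ and we are trivially done. So assume, say, $\ha(\bar 1)\ne\emptyset$. Since $\ha=\hl(m,\ell,|\ha|)$ this forces $\left\{A\in\binom{[m]}{\ell}\colon 1\in A\right\}\subset\ha$, i.e.\ $|\ha|\ge\binom{m-1}{\ell-1}$. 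Now $\ha(\bar1)$ and $\hb([1])=\{B\setminus\{1\}\colon 1\in B\in\hb\}$ need not interact, but $\ha(\bar1)$ must be cross-intersecting with $\hb(\bar1)$ \emph{and} with every $B\setminus\{1\}$ where $B\in\hb$ contains $1$; more to the point, since every $A\in\ha$ with $1\notin A$ meets every $B\in\hb$, and in the lex order the sets of $\ha$ not containing $1$ are exactly $\hl$ restricted to $[2,m]$, one deduces that every $B\in\hb$ must contain $1$ (otherwise two lex-minimal disjoint sets in $[2,m]$ appear), hence $\hb(\bar1)=\emptyset\le\binom{m-2}{\ell-2}$, and it remains only to bound $|\ha(\bar1)|$.

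To bound $|\ha(\bar1)|$: the families $\ha(\bar1)\subset\binom{[2,m]}{\ell}$ and $\hb([1])\subset\binom{[2,m]}{\ell-1}$ are cross-intersecting on the $(m-1)$-set $[2,m]$ (since any $A\in\ha$ with $1\notin A$ meets any $B\in\hb$, and all such $B$ contain $1$, the intersection must occur inside $[2,m]\cap(B\setminus\{1\})$). By Corollary~\ref{cor-key} applied on the ground set $[2,m]$ with sizes $\ell$ and $\ell-1$ (note $m-1\ge\ell+(\ell-1)$ since $m\ge2\ell$),
\[
\frac{|\ha(\bar1)|}{\binom{m-1}{\ell}}+\frac{|\hb([1])|}{\binom{m-1}{\ell-1}}\le 1.
\]
Using $|\hb|=|\hb([1])|\ge 5\binom{m-2}{\ell-2}$ and the ratio $\binom{m-1}{\ell}\big/\binom{m-1}{\ell-1}=\frac{m-\ell}{\ell}$, this yields
\[
|\ha(\bar1)|\le \binom{m-1}{\ell}-\frac{m-\ell}{\ell}\cdot 5\binom{m-2}{\ell-2}
=\binom{m-2}{\ell-1}+\binom{m-2}{\ell}-5\,\frac{m-\ell}{\ell}\binom{m-2}{\ell-2},
\]
and a short estimate using $m\ge2\ell$ (so that $\binom{m-2}{\ell}\ge\binom{m-2}{\ell-2}$ and $\frac{m-\ell}{\ell}\ge\frac12$) shows the right-hand side is at most $\binom{m-2}{\ell-2}$ once the hypothesis $|\hb|\ge5\binom{m-2}{\ell-2}$ is in force; this is exactly the place where the constant $5$ is used. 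The main obstacle is the bookkeeping in the lex-order case analysis — specifically, justifying rigorously that one of $\ha,\hb$ must be a full star once its complement-in-coordinate-$1$ is nonempty, and handling the borderline case $m=2\ell$ where $\hb$ could be an antipodal-type family rather than a star; in that degenerate case one argues directly that $|\ha|,|\hb|\ge5\binom{m-2}{\ell-2}$ already forces both to be stars at coordinate $1$, so \eqref{ineq-frankl2} holds with $j=1$. Since the excerpt defers to the Appendix, I would present the argument in that level of detail there.
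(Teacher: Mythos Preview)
Your reduction step is the main gap. The conclusion of Lemma~\ref{thm-fk} is a statement about the \emph{degree structure} of the original families $\ha,\hb$: one must exhibit a single $j$ with both $|\ha(\bar j)|$ and $|\hb(\bar j)|$ small. Replacing $\ha,\hb$ by the lexicographic families $\hl(m,\ell,|\ha|),\hl(m,\ell,|\hb|)$ via Hilton's Lemma preserves sizes and the cross-intersecting property, but it completely destroys the degree information; proving that the lex families share a heavy element (namely $1$) says nothing about the original pair. The same objection applies to your shifting remark: a single shift $S_{ij}$ can strictly increase the maximum degree, so knowing $|\ha'(\bar 1)|\le\binom{m-2}{\ell-2}$ for the shifted family does not imply $\min_j|\ha(\bar j)|\le\binom{m-2}{\ell-2}$ for the original one, let alone that the same $j$ works for $\hb$. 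The paper's proof avoids this by arguing indirectly: assuming the conclusion fails for the original families, it derives the degree invariant $\max\{|\ha(j)|,|\hb(j)|\}<\binom{m-2}{\ell-2}+\binom{m-3}{\ell-2}$ for all $j$, shows that \emph{this invariant} (not the bare failure of the lemma) is preserved under simultaneous shifting, and finally contradicts it in the shifted families using the cross $2$-intersecting structure of $\ha(\bar 1),\hb(\bar 1)$ together with Theorem~\ref{thm-fk2}.

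There is a second, independent error in your final estimate. From Corollary~\ref{cor-key} you obtain
\[
|\ha(\bar 1)|\le\binom{m-1}{\ell}-\tfrac{m-\ell}{\ell}\cdot 5\binom{m-2}{\ell-2},
\]
and claim this is at most $\binom{m-2}{\ell-2}$. But for $m$ large relative to $\ell$ the term $\binom{m-1}{\ell}=\binom{m-2}{\ell}+\binom{m-2}{\ell-1}$ is of order $\bigl(\tfrac{m-\ell}{\ell}\bigr)^{2}\binom{m-2}{\ell-2}$, while the subtracted term is only of order $\tfrac{m-\ell}{\ell}\binom{m-2}{\ell-2}$; e.g.\ at $m=100$, $\ell=3$ the right-hand side exceeds $10^5$ while $\binom{m-2}{\ell-2}=98$. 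The Sperner-type bound of Corollary~\ref{cor-key} is far too weak here; one needs a tool that exploits the full size hypothesis on $\hb$ more sharply (as Lemma~\ref{lem-key0} and Theorem~\ref{thm-fk2} do in the paper's argument).
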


Let us prove a simple but useful statement.

\begin{fact}\label{fact-key}
Let $\hf\subset \binom{[n]}{k}$ and  $u,v\in [n]$.
Suppose that $|\hf(u)|\geq |\hf(v)|$ then $|\hf(u,\bar{v})|\geq |\hf(\bar{u},v)|$.
\end{fact}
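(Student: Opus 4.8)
The statement to prove is: if $\hf\subset\binom{[n]}{k}$ and $u,v\in[n]$ satisfy $|\hf(u)|\ge|\hf(v)|$, then $|\hf(u,\bar v)|\ge|\hf(\bar u,v)|$. The plan is simply to split each degree into the part avoiding the other element and the part containing it. Write $\hf(u)$ as the disjoint union of $\hf(u,v)=\{F\setminus\{u\}\colon u,v\in F\in\hf\}$ and $\hf(u,\bar v)=\{F\setminus\{u\}\colon u\in F\in\hf,\ v\notin F\}$, so that $|\hf(u)|=|\hf(u,v)|+|\hf(u,\bar v)|$. Similarly $|\hf(v)|=|\hf(u,v)|+|\hf(\bar u,v)|$, where I use that $\{F\setminus\{v\}\colon u,v\in F\in\hf\}$ and $\{F\setminus\{u\}\colon u,v\in F\in\hf\}$ have the same cardinality (both are in bijection with the family of sets $F\in\hf$ containing both $u$ and $v$).

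Subtracting the two identities gives
\[
|\hf(u)|-|\hf(v)| = |\hf(u,\bar v)| - |\hf(\bar u,v)|.
\]
Since the left-hand side is nonnegative by hypothesis, the right-hand side is nonnegative, which is exactly the claim.

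There is essentially no obstacle here; the only thing to be careful about is the bookkeeping in the definition of these ``link'' families, namely that $\hf(u,v)$ can be unambiguously counted as the number of members of $\hf$ containing $\{u,v\}$, so that it contributes equally to $|\hf(u)|$ and to $|\hf(v)|$. Once the two partitions $|\hf(u)|=|\hf(u,v)|+|\hf(u,\bar v)|$ and $|\hf(v)|=|\hf(u,v)|+|\hf(\bar u,v)|$ are written down, the conclusion is immediate by subtraction. I would present it in two or three lines.
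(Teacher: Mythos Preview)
Your proof is correct and is essentially identical to the paper's: both decompose $|\hf(u)|=|\hf(u,v)|+|\hf(u,\bar v)|$ and $|\hf(v)|=|\hf(u,v)|+|\hf(\bar u,v)|$ and subtract. The extra care you take in noting that the two versions of $\hf(u,v)$ have the same cardinality is fine but could be omitted, as the paper does.
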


\begin{proof}
Just note that $|\hf(x)|=|\hf(x,y)|+|\hf(x,\bar{y})|$ implies $|\hf(u)|-|\hf(v)| = |\hf(u,\bar{v})|-|\hf(\bar{u},v)|$.
\end{proof}

For $\hf\subset \binom{[n]}{k}$, the {\it matching number} $\nu(\hf)$ is defined as  the maximum number of pairwise disjoint members in $\hf$. Recall another simple statement.

\begin{prop}[\cite{F87}]
Suppose that $\hf\subset \binom{[n]}{k}$ then
\begin{align}\label{ineq-EMCup}
|\hf|\leq \nu(\hf)\binom{n-1}{k-1}.
\end{align}
\end{prop}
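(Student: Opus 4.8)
The plan is to reduce the statement to the Erd\H{o}s--Ko--Rado inequality \eqref{ineq-ekr} applied inside a cleverly chosen ground set. Write $\nu=\nu(\hf)$ and fix a maximum matching $M_1,\dots,M_\nu\in\hf$ of pairwise disjoint edges; let $S=M_1\cup\cdots\cup M_\nu$, so $|S|=k\nu$. First I would observe that, by maximality of the matching, every $F\in\hf$ meets $S$: if some $F$ were disjoint from $S$ we could extend the matching, a contradiction. Hence $\hf=\bigcup_{x\in S}\hf(x)$ in the sense that each member of $\hf$ contains at least one of the $k\nu$ points of $S$, and therefore
\[
|\hf|\ \le\ \sum_{x\in S}\bigl|\{F\in\hf:x\in F\}\bigr|\ \le\ k\nu\cdot\Delta(\hf).
\]
This already gives $|\hf|\le k\nu\binom{n-1}{k-1}$, which is off by a factor $k$; the real content is to remove it.

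To get the sharp bound I would instead argue pointwise on the matching edges rather than on the $k\nu$ individual points. For each $i$ with $1\le i\le\nu$, let $\hf_i=\{F\in\hf:F\cap M_j=\emptyset\text{ for all }j<i,\ F\cap M_i\ne\emptyset\}$; these sets partition $\hf$ (again using that every $F$ meets $S$, so it meets some $M_i$, and we take the least such $i$). It therefore suffices to show $|\hf_i|\le\binom{n-1}{k-1}$ for each $i$. Here is where I expect the main obstacle, and where a slicker idea is needed: the naive bound $|\hf_i|\le k\binom{n-1}{k-1}$ (one term per point of $M_i$) is again too weak. The trick I would try is the standard one for this inequality: consider a uniformly random cyclic permutation $\pi$ of $[n]$ and count, for each $F\in\hf$, the intervals of length $k$ (arcs of $\pi$) equal to $F$; using that $\hf$ is intersecting one shows the number of arcs lying in $\hf$ is at most $k$ along any fixed cycle, while the matching structure lets one partition the $k$ arcs among the $\nu$ matching edges — but in fact the cleanest route is to bound $|\hf_i|$ directly by EKR after deleting coordinates, as follows.

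Concretely, for a fixed $i$ pick a point $x\in M_i$ and split $\hf_i=\hf_i(x)\cup\hf_i(\bar x)$. The members of $\hf_i(\bar x)$ are intersecting and each meets $M_i\setminus\{x\}$ (a set of size $k-1$), so by induction on $k$ (or by iterating this deletion over all of $M_i$) one reduces to the base case; meanwhile $\hf_i(x)$ contributes at most $\binom{n-1}{k-1}-$ (the part already accounted for). Rather than push this delicate induction, the intended proof in \cite{F87} is almost certainly the one-line cyclic-permutation argument: average over cyclic orders of $[n]$, note that in each cyclic order the edges of $\hf$ that appear as arcs can be covered by $\nu$ "fans" of at most $\binom{k}{1}$... — more precisely, a classical lemma says that in any cyclic order at most $k\nu$ of the $n$ arcs of length $k$ belong to $\hf$, but actually at most $\nu$ of the $n/\gcd$ ''blocks'' — and then
\[
|\hf|\ =\ \frac{1}{(n-1)!}\sum_{\pi}\#\{\text{arcs of }\pi\text{ in }\hf\}\cdot\frac{(n-k)!\,k!}{n}\ \le\ \nu\binom{n-1}{k-1}.
\]
So the skeleton is: (1) fix a maximum matching and note every edge meets its union; (2) reduce to a per-cycle counting statement via averaging over cyclic permutations; (3) prove the per-cycle bound ``at most $k\nu$ arcs, grouped into $\nu$ intersecting bundles of $\le k$ arcs each giving $\le k\cdot(k-1)!(n-k)!/n$ weight'', i.e. exactly $\nu$ times the EKR arc count. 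The hard part is Step (3): showing the arcs of $\hf$ within one cyclic order split into at most $\nu$ ''intervals'' of total contribution $\binom{n-1}{k-1}$ each — this is where the matching number, not just intersecting-ness, gets used, and it is the crux of the argument in \cite{F87}.
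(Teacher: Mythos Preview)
The paper does not give its own proof of this proposition; it is quoted from \cite{F87} without argument, so there is nothing on the paper's side to compare against.

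Regarding your attempt: you correctly diagnose that the partition $\hf=\bigcup_i\hf_i$ by ``first matching edge met'' does not work directly, because the pieces $\hf_i$ are not intersecting --- two members of $\hf_i$ can meet $M_i$ at different vertices and be otherwise disjoint --- so EKR does not apply to them, and the crude bound $|\hf_i|\le k\binom{n-1}{k-1}$ is too weak. You then turn to Katona's cyclic-permutation averaging, which is indeed the standard route to this inequality, but the write-up never establishes the one step that carries all the weight: the per-cycle lemma that in any cyclic ordering of $[n]$, at most $k\,\nu(\hf)$ of the $n$ length-$k$ arcs lie in $\hf$. The phrases you offer in its place (``grouped into $\nu$ intersecting bundles of $\le k$ arcs each'', ``at most $\nu$ of the $n/\gcd$ blocks'') are not an argument, and the abandoned induction on $k$ via $\hf_i(x)\cup\hf_i(\bar x)$ does not lead anywhere either, for the same reason the partition failed. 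The missing ingredient is a short combinatorial argument on the circle (for instance, a greedy sweep showing that if more than $k\nu$ arcs were present one could select $\nu+1$ pairwise disjoint ones, since each selected arc blocks at most $k-1$ subsequent starting positions); once that is supplied, your double count does give $|\hf|\le\nu\binom{n-1}{k-1}$. As written, however, the proof has a genuine gap at precisely the point you yourself flag as ``the crux''.
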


We also need the following inequality.

\begin{prop}[\cite{FW2022-2}]
  Let $n,k,i$ be positive integers. Then
  \begin{align}\label{ineq4-1.2}
    \binom{n-i}{k}\geq \frac{n-ik}{n}\binom{n}{k}, \mbox{ for } n>ik.
  \end{align}
\end{prop}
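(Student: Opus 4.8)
The plan is to reduce \eqref{ineq4-1.2} to an elementary term-by-term comparison of binomial coefficients. First I would multiply through by $n$ and rearrange, so that it suffices to prove $n\bigl(\binom{n}{k}-\binom{n-i}{k}\bigr)\le ik\binom{n}{k}$. For the left-hand side I would iterate Pascal's identity to obtain the telescoping formula $\binom{n}{k}-\binom{n-i}{k}=\sum_{j=1}^{i}\binom{n-j}{k-1}$; for the right-hand side I would use $k\binom{n}{k}=n\binom{n-1}{k-1}$ to write $ik\binom{n}{k}=n\sum_{j=1}^{i}\binom{n-1}{k-1}$. The inequality then becomes $\sum_{j=1}^{i}\binom{n-j}{k-1}\le\sum_{j=1}^{i}\binom{n-1}{k-1}$, which holds because $\binom{n-j}{k-1}\le\binom{n-1}{k-1}$ for each $j\ge1$ (and $n-j\ge n-i>0$ since $n>ik$). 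This finishes the proof, and no step is a genuine obstacle.

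The one place where care is needed is the choice of reformulation. The tempting approach --- writing $\binom{n-i}{k}/\binom{n}{k}=\prod_{j=0}^{i-1}\bigl(1-\tfrac{k}{n-j}\bigr)$ and invoking the Weierstrass product inequality (or Bernoulli's inequality) --- yields only the lower bound $1-\sum_{j=0}^{i-1}\tfrac{k}{n-j}$, which is strictly smaller than $1-\tfrac{ik}{n}$ and hence too weak. Moving to the additive form above, where the factor $n$ is absorbed through $k\binom{n}{k}=n\binom{n-1}{k-1}$, is exactly what makes the estimate tight enough. As an alternative, I would note that \eqref{ineq4-1.2} also follows by induction on $i$: the base case $i=1$ is the identity $\binom{n-1}{k}=\tfrac{n-k}{n}\binom{n}{k}$, and the inductive step combines $\binom{n-i-1}{k}=\tfrac{n-i-k}{n-i}\binom{n-i}{k}$ with the inequality $(n-i-k)(n-ik)\ge(n-i)\bigl(n-(i+1)k\bigr)$, whose two sides differ by exactly $ki(k-1)\ge0$.
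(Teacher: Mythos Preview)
Your argument is correct. The telescoping identity $\binom{n}{k}-\binom{n-i}{k}=\sum_{j=1}^{i}\binom{n-j}{k-1}$ together with $k\binom{n}{k}=n\binom{n-1}{k-1}$ reduces the claim to the trivial termwise bound $\binom{n-j}{k-1}\le\binom{n-1}{k-1}$, and your remark that the naive product/Bernoulli estimate loses exactly the wrong way is accurate. The inductive alternative you sketch is also valid; the difference $(n-i-k)(n-ik)-(n-i)(n-(i+1)k)=ik(k-1)\ge 0$ is computed correctly.

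There is nothing to compare against here: the paper does not supply its own proof of this proposition but simply quotes it from \cite{FW2022-2}. Your self-contained verification is therefore a genuine addition rather than a paraphrase.
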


For $\hf\subset\binom{[n]}{k}$ and $P\subset Q\subset [n]$, let
\[
\hf(P,Q) = \left\{F\setminus Q\colon F\cap Q=P, F\in\hf\right\}.
\]
We also use $\hf(x,\bar{y},\bar{z})$ to denote $\hf(\{x\},\{x,y,z\}$  and  $\hf(x,y,\bar{z})$ to denote $\hf(\{x,y\}, \{x,y,z\})$.

\section{Proof of Theorem \ref{thm-main}}

Let us first prove a useful lemma.

\begin{lem}\label{lem-key2}
Let $\hf\subset \binom{[n]}{k}$ be an intersecting family  and let $|\hf(u)|=\Delta(\hf)$. If  $|\hf(\bar{u},v)|\geq  5\binom{n-4}{k-3}$ for some $v\in [n]\setminus \{u\}$, then there exists $w\in [n]\setminus \{u,v\}$ such that
\begin{align*}
 |\hf(\emptyset,\{u,v,w\})|\leq \binom{n-7}{k-4}\mbox{ and }|\hf(\{x\},\{u,v,w\})|\leq \binom{n-4}{k-3},\ x=u,v,w.
\end{align*}
\end{lem}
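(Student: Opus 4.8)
The plan is to bootstrap from the hypothesis $|\hf(\bar u,v)|\ge 5\binom{n-4}{k-3}$ up to a lower bound on $|\hf(v)|$, and then apply Lemma~\ref{thm-fk} on the ground set $[n]\setminus\{u,v\}$ to the two cross-intersecting pieces of $\hf$ obtained by deleting $u$ and $v$. Concretely, first I would observe that $\hf(u)$ and $\hf(\bar u,v)$ live on $[n]\setminus\{u\}$ and are cross-intersecting: if $G\in\hf(u)$ and $H\in\hf(\bar u,v)$, then $G\cup\{u\}$ and $H$ are members of $\hf$, hence meet, and since $u\notin H$ they meet in $[n]\setminus\{u\}$. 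Now $\hf(\bar u,v)\subset\hf(v)$ (as subfamilies of $2^{[n]\setminus\{u\}}$ after forgetting $u$), so $|\hf(v)|\ge 5\binom{n-4}{k-3}$. Since $|\hf(u)|=\Delta(\hf)\ge|\hf(v)|$, both $|\hf(u)|$ and $|\hf(v)|$ are at least $5\binom{n-4}{k-3}$.

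Next I would pass to the ground set $Y=[n]\setminus\{u,v\}$, which has $m:=n-2$ elements, and consider the families $\ha=\hf(u,\bar v)$ and $\hb=\hf(\bar u,v)$ on $Y$; these are $(k-1)$-uniform, cross-intersecting (same argument as above, using that members of $\hf$ containing $u$ but not $v$ meet members containing $v$ but not $u$ inside $Y$), and by Fact~\ref{fact-key} we have $|\hf(u,\bar v)|\ge|\hf(\bar u,v)|\ge 5\binom{n-4}{k-3}=5\binom{m-2}{(k-1)-2}$. So Lemma~\ref{thm-fk} applies with $\ell=k-1$: it yields $w\in Y$ with $\max\{|\hf(u,\bar v)(\bar w)|,|\hf(\bar u,v)(\bar w)|\}\le\binom{m-2}{\ell-2}=\binom{n-4}{k-3}$. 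Translating the derivative notation back, $\hf(u,\bar v)(\bar w)=\hf(\{u\},\{u,v,w\})$ and $\hf(\bar u,v)(\bar w)=\hf(\{v\},\{u,v,w\})$, so two of the three required bounds $|\hf(\{x\},\{u,v,w\})|\le\binom{n-4}{k-3}$ — for $x=u$ and $x=v$ — come for free, and they are also bounds on $|\hf(u,\bar v,\bar w)|$ etc., which I will need for the third.

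The remaining two bounds, $|\hf(\{w\},\{u,v,w\})|\le\binom{n-4}{k-3}$ and $|\hf(\emptyset,\{u,v,w\})|\le\binom{n-7}{k-4}$, are where the real work is, and I expect this to be the main obstacle. For the first: sets in $\hf(w,\bar u,\bar v)$, together with $\{w\}$ added back, are members of $\hf$ meeting everything; in particular they must intersect every member of $\hf$ that avoids $w$, and in particular every $F\in\hf(u,\bar v,\bar w)$ and every $F\in\hf(\bar u,v,\bar w)$ — so $\hf(w,\bar u,\bar v)$ is cross-intersecting on $Y\setminus\{w\}$ (an $(n-3)$-set) with both $\hf(\{u\},\{u,v,w\})$ and $\hf(\{v\},\{u,v,w\})$, viewed as $(k-1)$- and $(k-2)$-sets appropriately. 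I would combine this with the fact, forced by the structure of $\hl$-extremal configurations (via Hilton's Lemma, or directly by a counting argument using Corollary~\ref{cor-key}), that the derivative families that were \emph{not} small, namely $\hf(u,\bar v,w)$ and $\hf(\bar u,v,w)$ viewed appropriately, are large enough to pin $\hf(w,\ldots)$ down. For the last bound on $|\hf(\emptyset,\{u,v,w\})|$: any $F\in\hf$ disjoint from $\{u,v,w\}$ must meet every member of $\hf$ containing exactly one of $u,v,w$; since at least two of the families $\hf(\{u\},\{u,v,w\}), \hf(\{v\},\{u,v,w\}), \hf(\{w\},\{u,v,w\})$ are now known to be nonempty — indeed I would argue they are each of size at least roughly $\binom{n-4}{k-3}$ minus the small terms, hence positive and in fact reasonably large — one gets that $\hf(\emptyset,\{u,v,w\})$ is cross-intersecting with each, living on an $(n-3)$-set as a $k$-uniform family, and Lemma~\ref{lem-key0} (with $d$ chosen so that the three large stars force $[3]\subset F$ for all $F$ in the extremal order) gives $|\hf(\emptyset,\{u,v,w\})|\le\binom{n-7}{k-4}$; one checks the hypotheses of Lemma~\ref{lem-key0} hold because $n\ge 42k/(3-2C)$ makes $5\binom{n-4}{k-3}$ comfortably exceed the threshold $\binom{n-4}{k-2}+\binom{n-5}{k-2}+\binom{n-6}{k-2}$. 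Throughout, the size hypothesis $n\ge 42k/(3-2C)\ge 84k$ is what guarantees every ratio of binomial coefficients behaves, and I would verify the numerical inequalities at the end rather than interleaving them.
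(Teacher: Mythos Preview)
Your first two steps---using Fact~\ref{fact-key} to get $|\hf(u,\bar v)|\ge|\hf(\bar u,v)|\ge 5\binom{n-4}{k-3}$ and then applying Lemma~\ref{thm-fk} on $[n]\setminus\{u,v\}$ to produce $w$ with $|\hf(\{u\},T)|,|\hf(\{v\},T)|\le\binom{n-4}{k-3}$---are exactly what the paper does. The trouble is in the remaining two bounds, where you have the cross-intersecting pairings backwards.

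For $|\hf(\{w\},T)|$ you propose to use that $\hf(\{w\},T)$ is cross-intersecting with $\hf(\{u\},T)$ and $\hf(\{v\},T)$; but those two families are \emph{small} (you just bounded them above by $\binom{n-4}{k-3}$, and there is no lower bound---they may be empty), so they impose no constraint via Lemma~\ref{lem-key0}. You then point to $\hf(u,\bar v,w)=\hf(\{u,w\},T)$ and $\hf(\bar u,v,w)=\hf(\{v,w\},T)$ as the ``large'' families that should pin down $\hf(\{w\},T)$, but these both contain $w$, so the intersecting property of $\hf$ gives no cross-intersecting relation between them and $\hf(\{w\},T)$ on $[n]\setminus T$. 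The only legitimate partner for $\hf(\{w\},T)$ is $\hf(\{u,v\},T)$, and you never show it is large. The paper does this with one extra idea: since $u$ has maximum degree, $|\hf(\bar w)|\ge|\hf(\bar u)|\ge 5\binom{n-4}{k-3}$; decomposing $\hf(\bar w)$ and subtracting the already-bounded pieces $\hf(\{u\},T),\hf(\{v\},T),\hf(\emptyset,T)$ forces $|\hf(\{u,v\},T)|>\binom{n-4}{k-3}+\binom{n-5}{k-3}+\binom{n-6}{k-3}$, and then Lemma~\ref{lem-key0} with $d=3$ gives $|\hf(\{w\},T)|\le\binom{n-6}{k-4}$.

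The same inversion occurs for $|\hf(\emptyset,T)|$: you pair it with the single-element families $\hf(\{x\},T)$, which again are the small ones. The correct partner is a two-element family, e.g.\ $\hf(\{u,w\},T)$, which is large by direct subtraction: $|\hf(\{u,w\},T)|=|\hf(u,\bar v)|-|\hf(\{u\},T)|\ge 4\binom{n-4}{k-3}$, and Lemma~\ref{lem-key0} with $d=4$ then yields $|\hf(\emptyset,T)|\le\binom{n-7}{k-4}$. Note also that your claimed numerical check---that $5\binom{n-4}{k-3}$ exceeds $\binom{n-4}{k-2}+\binom{n-5}{k-2}+\binom{n-6}{k-2}$---is false for large $n$ (each summand is roughly $\frac{n}{k}\binom{n-4}{k-3}$), and that the hypothesis $n\ge 42k/(3-2C)$ is not part of this lemma; the paper's argument uses only $|\hf(\bar u,v)|\ge 5\binom{n-4}{k-3}$.
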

\begin{proof}
Since $|\hf(u)|=\Delta(\hf)\geq |\hf(v)|$, by Fact \ref{fact-key} we have
\[|\hf(u,\bar{v})|\geq |\hf(\bar{u},v)|\geq 5\binom{n-4}{k-3}.
\]
Note that $\hf(u,\bar{v}),\hf(\bar{u},v)\subset \binom{[n]\setminus\{u,v\}} {k-1}$ are cross-intersecting. Applying Lemma \ref{thm-fk} with $m=n-2$ and $\ell=k-1$, there exists $w\in [n]\setminus\{u,v\}$ such that
\begin{align}\label{ineq-key4}
|\hf(u,\bar{v},\bar{w})|,|\hf(\bar{u},v,\bar{w})|\leq \binom{n-4}{k-3}.
\end{align}
Let $T=\{u,v,w\}$. Then
\[
|\hf(\{u,w\},T)|= |\hf(u,\bar{v})|-|\hf(u,\bar{v},\bar{w})| \geq 5\binom{n-4}{k-3}-\binom{n-4}{k-3}=4\binom{n-4}{k-3}.
\]
Note that $\hf(\{u,w\},T)$ and $\hf(\emptyset,T)$ are cross-intersecting. By Lemma \ref{lem-key0} we have
\begin{align}\label{ineq-key7}
|\hf(\emptyset,T)|\leq \binom{n-7}{k-4}.
\end{align}

Note that
\[
|\hf(\bar{w})| \geq |\hf(\bar{u})|\geq |\hf(\bar{u},v)| \geq 5\binom{n-4}{k-3}.
\]
Thus,
\begin{align*}
|\hf(\{u,v\},T)|&= |\hf(\bar{w})| - |\hf(\{u\},T)|- |\hf(\{v\},T)|-|\hf(\emptyset,T)|\\[5pt]
&\geq 5\binom{n-4}{k-3}-2\binom{n-4}{k-3}-\binom{n-7}{k-4}\\[5pt]
&> \binom{n-4}{k-3}+\binom{n-5}{k-3}+\binom{n-6}{k-3}.
\end{align*}
Since $\hf(\{u,v\},T), \hf(\{w\},T)$ are cross-intersecting, by Lemma \ref{lem-key0}
\begin{align}\label{ineq-key6}
|\hf(\{w\},T)|\leq \binom{n-6}{k-4}<\binom{n-4}{k-3}.
\end{align}
Combining \eqref{ineq-key4}, \eqref{ineq-key7} and \eqref{ineq-key6} we see that the lemma holds.
\end{proof}

Now we are in a position to prove the main result.

\begin{proof}[Proof of Theorem \ref{thm-main}]
Let us assume indirectly that
\begin{align}\label{ineq-assumption}
\gamma_C(\hf)>(3-2C)\binom{n-3}{k-2} \mbox{ or }\gamma_C(\hf)=(3-2C)\binom{n-3}{k-2}\mbox{ but $\hf$ is not isomorphic to } \hf_{\triangle}
\end{align}
 and derive a contradiction.

%
%

\begin{claim}\label{claim-1}
$\frac{8}{3}\binom{n-3}{k-2}< |\hf|< 3\binom{n-3}{k-2}$.
\end{claim}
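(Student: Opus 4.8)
\textbf{Plan for the proof of Claim \ref{claim-1}.}

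The strategy is to squeeze $|\hf|$ from both sides using the indirect assumption \eqref{ineq-assumption} together with the known results on $\varrho(\hf)$. For the upper bound, note that by the Hilton--Milner theorem (Theorem \ref{thm-hm}), any intersecting family that is not a star satisfies $|\hf|\leq |\hf_{k+1}|<\binom{n-1}{k-1}$; and if $\hf$ were a star then $\Delta(\hf)=|\hf|$, giving $\gamma_C(\hf)=(1-C)|\hf|<0<(3-2C)\binom{n-3}{k-2}$, contradicting \eqref{ineq-assumption}. So $\hf$ is not a star. Now I would argue that if $|\hf|\geq 3\binom{n-3}{k-2}$, then in particular $|\hf|\geq 36\binom{n-3}{k-3}$ for $n$ large (using $\binom{n-3}{k-2}/\binom{n-3}{k-3}=(n-k-1)/(k-1)$, which is at least $12$ once $n\geq 42k/(3-2C)\geq 42k$, say, so $|\hf|\geq 3\binom{n-3}{k-2}\geq 36\binom{n-3}{k-3}$); hence Theorem \ref{thm-fw-3} applies and $\varrho(\hf)>\frac{2}{3}-\frac{k}{n}$, i.e. $\Delta(\hf)>\left(\frac{2}{3}-\frac{k}{n}\right)|\hf|$. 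Then
\[
\gamma_C(\hf)=|\hf|-C\Delta(\hf)<|\hf|\left(1-C\left(\tfrac{2}{3}-\tfrac{k}{n}\right)\right).
\]
Since $C<\tfrac32$ we have $1-\tfrac{2C}{3}>0$, but I must be careful: the coefficient $1-C(\tfrac23-\tfrac{k}{n})$ is positive, so this only gives an upper bound in terms of $|\hf|$, not a contradiction directly. Instead I would combine it with the EKR-type bound $|\hf|\leq\binom{n-1}{k-1}$: then $\gamma_C(\hf)<\left(1-\tfrac{2C}{3}+\tfrac{Ck}{n}\right)\binom{n-1}{k-1}$, and I want to show the right-hand side is at most $(3-2C)\binom{n-3}{k-2}$, which reduces to checking $\left(1-\tfrac{2C}{3}+\tfrac{Ck}{n}\right)\binom{n-1}{k-1}\leq(3-2C)\binom{n-3}{k-2}$. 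Using $\binom{n-1}{k-1}=\binom{n-3}{k-2}\cdot\frac{(n-1)(n-2)}{(k-1)(n-k)}$ and the hypothesis $n\geq 42k/(3-2C)$, this becomes a routine (if slightly tedious) numerical inequality; the factor $(3-2C)$ sitting in the denominator of the bound on $n$ is exactly what makes it work, since $1-\tfrac{2C}{3}=\tfrac{3-2C}{3}$.

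For the lower bound $|\hf|>\tfrac83\binom{n-3}{k-2}$, I would use \eqref{ineq-assumption} directly. Write $\gamma_C(\hf)=|\hf|-C\Delta(\hf)\geq|\hf|-C\Delta(\hf_3)$ would be the wrong direction; instead observe $\Delta(\hf)\leq|\hf|$ trivially gives $\gamma_C(\hf)\leq|\hf|-C\cdot\varrho(\hf)|\hf|$, which is not yet enough. The cleaner route: since $\hf$ is not a star and $n>2k$, we may assume (after the standard reduction, e.g. by Theorem \ref{thm-fw-2} or directly) that $\varrho(\hf)$ is bounded. Actually the simplest argument is: from $\gamma_C(\hf)\geq(3-2C)\binom{n-3}{k-2}>0$ and $\gamma_C(\hf)=|\hf|-C\Delta(\hf)\leq|\hf|-C\cdot 0$ is useless; rather use $\Delta(\hf)\geq|\hf|/\binom{n}{?}$... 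The right bound is $\gamma_C(\hf)\leq(1-C)|\hf|+C\gamma(\hf)$, no — simply $\gamma_C(\hf)=\gamma(\hf)-(C-1)\Delta(\hf)\leq\gamma(\hf)\leq\binom{n-3}{k-2}$ by Theorem \ref{thm-fw-2}. Hmm, that gives $(3-2C)\binom{n-3}{k-2}\leq\gamma_C(\hf)\leq\binom{n-3}{k-2}$, consistent but not a lower bound on $|\hf|$. For the lower bound I would instead argue: $\gamma_C(\hf)=|\hf|-C\Delta(\hf)\leq|\hf|-C\cdot\frac{|\hf|-\gamma(\hf)}{1}$... using $\Delta(\hf)=|\hf|-\gamma(\hf)$, so $\gamma_C(\hf)=(1-C)|\hf|+C\gamma(\hf)$. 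Then \eqref{ineq-assumption} gives $(1-C)|\hf|+C\gamma(\hf)\geq(3-2C)\binom{n-3}{k-2}$, i.e. $(C-1)|\hf|\leq C\gamma(\hf)-(3-2C)\binom{n-3}{k-2}\leq C\binom{n-3}{k-2}-(3-2C)\binom{n-3}{k-2}=(2C-3+... )$, wait: $C-(3-2C)=3C-3$, so $(C-1)|\hf|$ is bounded above — this bounds $|\hf|$ from \emph{above} again. The lower bound must come from the other side: $C\gamma(\hf)\geq(3-2C)\binom{n-3}{k-2}+(C-1)|\hf|$, and since $\gamma(\hf)\leq\binom{n-3}{k-2}$ this forces $(C-1)|\hf|\leq(3C-3)\binom{n-3}{k-2}$, hence $|\hf|\leq 3\binom{n-3}{k-2}$ — good, an alternative proof of the upper bound with no EKR needed, and it even gives strictness unless $\gamma(\hf)=\binom{n-3}{k-2}$ and $|\hf|=3\binom{n-3}{k-2}$, which is exactly $\hf=\hf_\triangle$, excluded by \eqref{ineq-assumption}. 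So the upper bound is easy. For the lower bound, rearrange differently: $(C-1)|\hf| = C\gamma(\hf) - \gamma_C(\hf)$. We need $|\hf|>\tfrac83\binom{n-3}{k-2}$, i.e. $C\gamma(\hf)-\gamma_C(\hf)>\tfrac83(C-1)\binom{n-3}{k-2}$; since $\gamma_C(\hf)\leq\gamma(\hf)$... no. I think the lower bound genuinely requires ruling out small families, which is where $\varrho(\hf)>\tfrac23-\tfrac kn$ from Theorem \ref{thm-fw-3} comes in, but applied to the \emph{contrapositive}: if $|\hf|<\tfrac83\binom{n-3}{k-2}$, then combined with $\varrho(\hf)>\tfrac23-\tfrac kn$ (which needs $|\hf|\geq 36\binom{n-3}{k-3}$, so I'd separately handle very small $\hf$ via Theorem \ref{thm-frankl}/Hilton--Milner showing $\gamma_C\leq(1-C)|\hf|+C\binom{n-3}{k-2}<(3-2C)\binom{n-3}{k-2}$ when $|\hf|$ is small), we get $\Delta(\hf)>(\tfrac23-\tfrac kn)|\hf|$, so $\gamma_C(\hf)<|\hf|(1-C(\tfrac23-\tfrac kn))=|\hf|\cdot\frac{3-2C+3Ck/n}{3}<\tfrac83\binom{n-3}{k-2}\cdot\frac{3-2C+3Ck/n}{3}$, and I must check this is $\leq(3-2C)\binom{n-3}{k-2}$, i.e. $\tfrac89(3-2C+3Ck/n)\leq 3-2C$, i.e. $8\cdot\frac{3Ck}{n}\leq(3-2C)$, i.e. $n\geq\frac{24Ck}{3-2C}$ — true since $24C<42$ for $C<\tfrac32$. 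So both bounds fall out.

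\textbf{Main obstacle.} The genuine difficulty is the bookkeeping around \emph{which} ambient result to invoke for \emph{which} size range of $\hf$: Theorem \ref{thm-fw-3} requires $|\hf|\geq 36\binom{n-3}{k-3}$, so the regime $|\hf|<36\binom{n-3}{k-3}$ must be disposed of separately (there $\gamma_C(\hf)=(1-C)|\hf|+C\gamma(\hf)$ with $\gamma(\hf)$ small — indeed Theorem \ref{thm-frankl} or a direct argument shows $\gamma(\hf)$ cannot be close to $\binom{n-3}{k-2}$ when $|\hf|$ is that small, because $\gamma(\hf)\leq|\hf|$ and the Hilton--Milner bound forces $\gamma(\hf)$ down, giving $\gamma_C(\hf)<(3-2C)\binom{n-3}{k-2}$ outright, contradicting \eqref{ineq-assumption}). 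Once the size ranges are correctly partitioned, each piece is a one-line binomial-ratio estimate controlled by the hypothesis $n\geq\frac{42k}{3-2C}$; the constant $42$ is chosen precisely to give slack in all these estimates simultaneously. I would organize the proof as: (i) $\hf$ is not a star (else $\gamma_C<0$); (ii) the identity $\gamma_C(\hf)=(1-C)|\hf|+C\gamma(\hf)$ together with $\gamma(\hf)\leq\binom{n-3}{k-2}$ yields $|\hf|\leq 3\binom{n-3}{k-2}$, with equality only for $\hf_\triangle$; (iii) if $|\hf|\leq 36\binom{n-3}{k-3}$, Hilton--Milner-type bounds contradict \eqref{ineq-assumption}; (iv) otherwise Theorem \ref{thm-fw-3} gives $\varrho(\hf)>\tfrac23-\tfrac kn$, and the displayed computation yields $|\hf|>\tfrac83\binom{n-3}{k-2}$.
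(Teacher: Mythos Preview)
Your eventual argument is correct and matches the paper's: the upper bound comes from the identity $\gamma_C(\hf)=(1-C)|\hf|+C\gamma(\hf)$ combined with $\gamma(\hf)\le\binom{n-3}{k-2}$ (Theorem~\ref{thm-fw-2}), and the lower bound from Theorem~\ref{thm-fw-3} followed by the arithmetic check $\tfrac{8}{9}\bigl(3-2C+3Ck/n\bigr)\le 3-2C$.

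One simplification you overlook: the separate ``very small $|\hf|$'' case is unnecessary. From \eqref{ineq-assumption} one has $|\hf|>\gamma_C(\hf)\geq(3-2C)\binom{n-3}{k-2}$, and since $\binom{n-3}{k-2}=\frac{n-k}{k-2}\binom{n-3}{k-3}>\frac{41}{3-2C}\binom{n-3}{k-3}$ (using $n-k>\frac{41k}{3-2C}$), this already gives $|\hf|>41\binom{n-3}{k-3}$, so Theorem~\ref{thm-fw-3} applies unconditionally. This is how the paper proceeds. Two minor slips in your sketch: the ratio $\binom{n-3}{k-2}/\binom{n-3}{k-3}$ equals $(n-k)/(k-2)$, not $(n-k-1)/(k-1)$; and the inequality $(1-C)|\hf|+C\binom{n-3}{k-2}<(3-2C)\binom{n-3}{k-2}$ you write for the small-$|\hf|$ regime is actually equivalent to $|\hf|>3\binom{n-3}{k-2}$, so it fails there --- the bound that does work is simply $\gamma_C(\hf)\le|\hf|$, which you note a line later. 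Theorems~\ref{thm-hm} and~\ref{thm-frankl} play no role.
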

\begin{proof}
If $|\hf|\geq 3\binom{n-3}{k-2}$, then by Theorem \ref{thm-fw-2} and $n> 36k$,
\[
\gamma_C(\hf) =C\gamma(\hf) -(C-1)|\hf| \leq C\binom{n-3}{k-2}-(C-1)\cdot3\binom{n-3}{k-2}=(3-2C)\binom{n-3}{k-2}.
\]
The equality holds if and only if $\gamma (\hf)=\binom{n-3}{k-2}$ and $|\hf|=3\binom{n-3}{k-2}$. By Theorem \ref{thm-fw-2} we infer that the equality holds if and only if $\hf=\hf_{\bigtriangleup}$ up to isomorphism. This contradicts our assumption \eqref{ineq-assumption}.
Thus $|\hf|< 3\binom{n-3}{k-2}$.

Note that $3-2C<1$ implies $n-k\geq \frac{42k}{3-2C}-k >\frac{41k}{3-2C}$.
Thus,
\[
\binom{n-3}{k-2} =\frac{n-k}{k-2}\binom{n-3}{k-3}>\frac{41}{3-2C} \binom{n-3}{k-3}.
\]
Consequently,
\[
|\hf|>\gamma_C(\hf) \geq (3-2C) \binom{n-3}{k-2}>41\binom{n-3}{k-3}.
\]
 Set $\varepsilon=\frac{3}{2}-C$. Then $0<\varepsilon<\frac{1}{2}$ and $n\geq \frac{42k}{3-2C}=\frac{21k}{\varepsilon}$.  By Theorem \ref{thm-fw-3} we have $\varrho(\hf)>\frac{2}{3}-\frac{\varepsilon}{21}$,  that is,
\begin{align}\label{ineq-1}
\Delta(\hf) \geq \left(\frac{2}{3}-\frac{\varepsilon}{21}\right)|\hf|.
\end{align}

 If $|\hf| \leq  \frac{8}{3}\binom{n-3}{k-2}$ then by \eqref{ineq-1} we have
\begin{align*}
\gamma_C(\hf) = |\hf| -C \Delta(\hf) \leq \left(1-\frac{2C}{3}+\frac{C\varepsilon}{21}\right)|\hf|&\leq  \left(1-\frac{2C}{3}+\frac{C\varepsilon}{21} \right)\left(3-\frac{1}{3}\right)\binom{n-3}{k-2}.
\end{align*}
Let us show that the coefficient of $\binom{n-3}{k-2}$ on the right hand side is less than $3-2C$.
\begin{align*}
\left(1-\frac{2C}{3}+\frac{C\varepsilon}{21} \right)\left(3-\frac{1}{3}\right)&= (3-2C)+\frac{C\varepsilon}{7} - \frac{1}{3}\left(1-\frac{2C}{3}+\frac{C\varepsilon}{21}\right)\\[5pt]
&= (3-2C) - \frac{1}{3}\left(1-\frac{2C}{3}-\frac{8C\varepsilon}{21}\right).
\end{align*}
Using  $C= \frac{3}{2}-\varepsilon$ we infer
\[
1-\frac{2 C}{3} -\frac{8C\varepsilon}{21}> \frac{2}{3}\varepsilon -\frac{8}{21} \times \frac{3}{2} \varepsilon = \frac{2\varepsilon}{21}>0.
\]
It follows that $|\hf|<  (3-2C)\binom{n-3}{k-2}$, a contradiction. Thus $|\hf| > \frac{8}{3}\binom{n-3}{k-2}$.
\end{proof}

\begin{cor}\label{cor-3.3}
$\varrho(\hf)<\frac{2}{3}$.
\end{cor}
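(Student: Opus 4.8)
The plan is to combine the two bounds from Claim \ref{claim-1} with the lower bound on $\gamma_C(\hf)$ forced by the indirect assumption \eqref{ineq-assumption}, plus the elementary identity $\gamma_C(\hf)=|\hf|-C\Delta(\hf)$. Since $\varrho(\hf)=\Delta(\hf)/|\hf|$, showing $\varrho(\hf)<2/3$ is the same as showing $3\Delta(\hf)<2|\hf|$, i.e. $\Delta(\hf)<\tfrac{2}{3}|\hf|$.

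First I would write $\Delta(\hf)=\dfrac{|\hf|-\gamma_C(\hf)}{C}$ directly from the definition of $C$-diversity. By \eqref{ineq-assumption} we have $\gamma_C(\hf)\ge(3-2C)\binom{n-3}{k-2}$, hence
\[
\Delta(\hf)\le \frac{|\hf|-(3-2C)\binom{n-3}{k-2}}{C}.
\]
Next I would feed in the strict upper bound $|\hf|<3\binom{n-3}{k-2}$ from Claim \ref{claim-1}. It is then enough to check that the resulting bound is at most $\tfrac{2}{3}|\hf|$, again using the same inequality $|\hf|<3\binom{n-3}{k-2}$ together with the lower bound $|\hf|>\tfrac{8}{3}\binom{n-3}{k-2}$ to control the cross terms. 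Writing $b=\binom{n-3}{k-2}$ and $|\hf|=xb$ with $\tfrac{8}{3}<x<3$, the claim $\Delta(\hf)<\tfrac{2}{3}|\hf|$ reduces to $\dfrac{x-(3-2C)}{C}<\dfrac{2}{3}x$, i.e. to $3\bigl(x-(3-2C)\bigr)<2Cx$, i.e. to $(3-2C)x<3(3-2C)$, i.e. to $x<3$, which is exactly Claim \ref{claim-1}. So the corollary is immediate once the algebra is unwound; no case analysis is needed, and the hypothesis $1<C<\tfrac32$ (so $3-2C>0$) is what lets us divide through by $3-2C$ without flipping the inequality.

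I do not expect a genuine obstacle here — the statement is essentially a bookkeeping consequence of Claim \ref{claim-1} and the indirect hypothesis. The only point requiring a little care is that the argument above only uses $|\hf|<3\binom{n-3}{k-2}$ and $\gamma_C(\hf)\ge(3-2C)\binom{n-3}{k-2}$, so one should make sure the displayed chain of equivalences is presented cleanly and that strictness of $x<3$ (rather than $\le$) is invoked to get the strict inequality $\varrho(\hf)<\tfrac23$; the lower bound $|\hf|>\tfrac{8}{3}\binom{n-3}{k-2}$ from Claim \ref{claim-1} is not even needed for this particular corollary, though it will be used later in the proof of Theorem \ref{thm-main}.
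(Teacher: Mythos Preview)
Your argument is correct and is essentially the contrapositive of the paper's own proof: the paper assumes $\varrho(\hf)\ge\tfrac{2}{3}$ and derives $\gamma_C(\hf)\le\bigl(1-\tfrac{2C}{3}\bigr)|\hf|=\delta(3-2C)\binom{n-3}{k-2}$ with $\delta=\dfrac{|\hf|}{3\binom{n-3}{k-2}}<1$, contradicting \eqref{ineq-assumption}. You are also right that only the strict upper bound $|\hf|<3\binom{n-3}{k-2}$ from Claim~\ref{claim-1} is used, not the lower bound.
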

\begin{proof}
Define $\delta=\frac{|\hf|}{3\binom{n-3}{k-2}}$ and note $\delta<1$. If $\varrho(\hf)\geq \frac{2}{3}$ then
\[
\gamma_C(\hf) =|\hf| - C\varrho(\hf)|\hf|\leq \left(1-\frac{2C}{3}\right)|\hf| =\delta(3-2C) \binom{n-3}{k-2},
\]
contradicting \eqref{ineq-assumption}.
\end{proof}

Now we show that $\hf$ is nearly contained in $\hf_3$.

\begin{claim}
There exists $T=\{u,v,w\}\subset [n]$ such that
\begin{align}\label{ineq-key0}
 |\hf(\emptyset,T)|\leq \binom{n-7}{k-4}\mbox{ and }|\hf(\{x\},T)|\leq \binom{n-4}{k-3},\ x=u,v,w.
\end{align}
\end{claim}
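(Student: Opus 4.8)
The claim asserts that $\hf$ is essentially concentrated on the family of sets meeting a fixed triple $T=\{u,v,w\}$ in exactly two points, with only $O\!\left(\binom{n-4}{k-3}\right)$ exceptional sets through each single vertex and even fewer avoiding $T$ entirely. By Lemma~\ref{lem-key2}, the whole statement follows once we produce a vertex $u$ with $|\hf(u)|=\Delta(\hf)$ and a second vertex $v\neq u$ with $|\hf(\bar u,v)|\geq 5\binom{n-4}{k-3}$; so the entire task reduces to establishing the existence of such a $v$.

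\textbf{Key steps.} First, pick $u$ with $|\hf(u)|=\Delta(\hf)=\varrho(\hf)|\hf|$. By Claim~\ref{claim-1} and Corollary~\ref{cor-3.3} we have $\varrho(\hf)<\tfrac23$, hence $|\hf(\bar u)|=|\hf|-\Delta(\hf)>\tfrac13|\hf|>\tfrac89\binom{n-3}{k-2}$; in particular $\hf(\bar u)$ is a nonempty intersecting family (any two of its members meet, since $\hf$ is intersecting), and it is large. Second, I would bound the maximum degree of $\hf(\bar u)$: writing $|\hf(\bar u,v)|$ for the number of members of $\hf$ avoiding $u$ but containing $v$, the quantity $\sum_{v\neq u}|\hf(\bar u,v)| = k\,|\hf(\bar u)|$, so the average of $|\hf(\bar u,v)|$ over the $n-1$ choices of $v$ is $\tfrac{k}{n-1}|\hf(\bar u)|$. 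That average alone is not quite enough; instead I would argue that $\hf(\bar u)$ has some vertex $v$ of degree at least a constant fraction of $|\hf(\bar u)|$ — e.g. by applying Theorem~\ref{thm-fw-3} or the crude bound $\Delta(\hf(\bar u))\geq |\hf(\bar u)|/\text{(matching-type estimate)}$ — and then check that this forces $|\hf(\bar u,v)|\geq 5\binom{n-4}{k-3}$ given the size lower bound on $\hf(\bar u)$ and $n\geq \tfrac{42k}{3-2C}$. Concretely, since $|\hf(\bar u)|>\tfrac89\binom{n-3}{k-2} = \tfrac89\cdot\tfrac{n-k}{k-2}\binom{n-3}{k-3}$ and $n-k$ is a large multiple of $k$, even a modest fraction of $|\hf(\bar u)|$ comfortably exceeds $5\binom{n-4}{k-3}$ (note $\binom{n-4}{k-3}\le\binom{n-3}{k-3}$). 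Third, with $u$ and $v$ in hand, invoke Lemma~\ref{lem-key2} directly to obtain $w$ and the three inequalities of \eqref{ineq-key0}.

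\textbf{Main obstacle.} The delicate point is the second step: ensuring $\hf(\bar u)$ genuinely has a vertex of large degree rather than being spread thinly. A priori $\hf(\bar u)$ could have small maximum degree only if it were itself "diverse," but an intersecting family of size close to $\binom{n-3}{k-2}$ on ground set $[n]\setminus\{u\}$ cannot have maximum degree below roughly $\tfrac23$ of its size for $n$ linear in $k$ — this is exactly the content of Theorem~\ref{thm-fw-3}, provided $|\hf(\bar u)|\geq 36\binom{n-4}{k-4}$, which holds since $|\hf(\bar u)|>\tfrac89\binom{n-3}{k-2}\gg \binom{n-4}{k-4}$ for $n$ a large multiple of $k$. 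So I expect to apply Theorem~\ref{thm-fw-3} to the family $\hf(\bar u)\subset\binom{[n]\setminus\{u\}}{k}$ to get $\Delta(\hf(\bar u))>\left(\tfrac23-\tfrac{k}{n-1}\right)|\hf(\bar u)|$, pick $v$ attaining this, and then the inequality $|\hf(\bar u,v)|=\Delta(\hf(\bar u))\geq 5\binom{n-4}{k-3}$ is a routine comparison of binomial coefficients using $n\geq\tfrac{42k}{3-2C}\geq 28k$. After that, Lemma~\ref{lem-key2} finishes the proof immediately.
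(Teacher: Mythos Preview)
Your proposal is correct and follows essentially the same route as the paper: pick $u$ of maximum degree, use Claim~\ref{claim-1} and Corollary~\ref{cor-3.3} to get $|\hf(\bar u)|>\tfrac89\binom{n-3}{k-2}$, apply Theorem~\ref{thm-fw-3} to the intersecting family $\hf(\bar u)\subset\binom{[n]\setminus\{u\}}{k}$ to find $v$ with $|\hf(\bar u,v)|\geq 5\binom{n-4}{k-3}$, and then invoke Lemma~\ref{lem-key2}. One small slip: the size hypothesis you need for Theorem~\ref{thm-fw-3} applied on ground set $[n]\setminus\{u\}$ is $|\hf(\bar u)|\geq 36\binom{n-4}{k-3}$, not $36\binom{n-4}{k-4}$; the required bound still holds since $\tfrac89\binom{n-3}{k-2}=\tfrac89\cdot\tfrac{n-3}{k-2}\binom{n-4}{k-3}>36\binom{n-4}{k-3}$ for $n>42k$.
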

\begin{proof}
Let $|\hf(u)|=\Delta(\hf)$ for some $u\in [n]$. By Claim \ref{claim-1} and Corollary \ref{cor-3.3},
\[
|\hf(\bar{u})|=|\hf|-\Delta(\hf)>\frac{|\hf|}{3} \geq \frac{8}{9} \binom{n-3}{k-2}.
\]
Since $n>42k$, we have
\[
|\hf(\bar{u})|\geq \frac{8}{9} \frac{n-3}{k-2}\binom{n-4}{k-3}\geq \frac{8}{9} \times 42 \binom{n-4}{k-3}> 36\binom{n-4}{k-3}.
\]
Since $\hf(\bar{u})$ is intersecting and $n-1\geq \frac{21k}{\varepsilon}-1> \frac{20k}{\varepsilon}$. By Theorem \ref{thm-fw-3}, there exists $v\in [n]\setminus \{u\}$ such that
\[
|\hf(\bar{u},v)| \geq \left(\frac{2}{3}-\frac{\varepsilon}{20}\right)|\hf(\bar{u})|\geq \left(\frac{2}{3}-\frac{\varepsilon}{20}\right) \times 36\binom{n-4}{k-3}
>5\binom{n-4}{k-3}.
\]
Then \eqref{ineq-key0} follows from Lemma \ref{lem-key2}.
\end{proof}

Now \eqref{ineq-key0}  shows $|\hf\setminus \hf_{uvw}^*|< 3\binom{n-3}{k-3}$ and $|\hf\setminus \hf_{uvw}|< 4\binom{n-3}{k-3}$.
That is, $\hf$ is quite close to the pure triangle family. We can proceed even further.

\begin{claim}
For $P\in \{\{u,v\}, \{u,w\}, \{v,w\}\}$,
\begin{align}\label{ineq-key5}
|\hf(P,T)|  > \frac{7}{9}\binom{n-3}{k-2}.
\end{align}
\end{claim}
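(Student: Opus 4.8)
The plan is to read off each $|\hf(P,T)|$ from a complementary vertex degree. Write the pair in question as $P=T\setminus\{x\}$ with $x\in\{u,v,w\}$, and set $\{y_1,y_2\}=P$. Any $F\in\hf$ with $x\notin F$ satisfies $F\cap T\subseteq P$, so $\hf(\bar x)$ is the disjoint union of the four subfamilies determined by the value of $F\cap T$ in $\{\emptyset,\{y_1\},\{y_2\},P\}$; comparing cardinalities (the map $F\mapsto F\setminus T$ is a bijection on each piece) gives
\[
|\hf(P,T)|=|\hf(\bar x)|-|\hf(\emptyset,T)|-|\hf(\{y_1\},T)|-|\hf(\{y_2\},T)|.
\]
Thus it suffices to bound $|\hf(\bar x)|$ from below and the three subtracted terms from above.

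For the lower bound I would use that $\varrho(\hf)<\tfrac23$ (Corollary~\ref{cor-3.3}) controls \emph{every} degree, not merely that of the apex $u$: since $|\hf(x)|\le\Delta(\hf)=\varrho(\hf)|\hf|<\tfrac23|\hf|$, we get $|\hf(\bar x)|=|\hf|-|\hf(x)|>\tfrac13|\hf|$, and Claim~\ref{claim-1} supplies $|\hf|>\tfrac83\binom{n-3}{k-2}$, whence $|\hf(\bar x)|>\tfrac89\binom{n-3}{k-2}$. For the upper bounds, \eqref{ineq-key0} gives $|\hf(\emptyset,T)|\le\binom{n-7}{k-4}$ and $|\hf(\{y_i\},T)|\le\binom{n-4}{k-3}$; since $n\ge\frac{42k}{3-2C}>42k$, one has $\binom{n-7}{k-4}\le\binom{n-4}{k-3}=\frac{k-2}{n-3}\binom{n-3}{k-2}<\tfrac1{42}\binom{n-3}{k-2}$, so the three subtracted terms total less than $3\binom{n-4}{k-3}<\tfrac1{14}\binom{n-3}{k-2}$. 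Combining, $|\hf(P,T)|>\bigl(\tfrac89-\tfrac1{14}\bigr)\binom{n-3}{k-2}=\tfrac{103}{126}\binom{n-3}{k-2}>\tfrac79\binom{n-3}{k-2}$. As $x\in T$ (equivalently $P$) was arbitrary, this disposes of all three pairs simultaneously.

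I expect no genuine obstacle here: the bound is essentially a one-line consequence of Corollary~\ref{cor-3.3}, Claim~\ref{claim-1} and \eqref{ineq-key0}, once the decomposition of $\hf(\bar x)$ is observed. The two points meriting a moment's care are (i) applying the degree bound coming from $\varrho(\hf)<\tfrac23$ to the complement of the \emph{omitted} vertex $x$, which in general is not the maximum-degree vertex $u$, and (ii) the routine binomial comparisons $\binom{n-7}{k-4}\le\binom{n-4}{k-3}\ll\binom{n-3}{k-2}$, both immediate from $n>42k$. Numerically there is ample slack ($\tfrac{103}{126}$ against $\tfrac{98}{126}$), so the exact constants in \eqref{ineq-key0} play no essential role.
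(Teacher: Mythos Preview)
Your proof is correct and in fact cleaner than the paper's. Both arguments rest on the same three inputs---Corollary~\ref{cor-3.3}, Claim~\ref{claim-1}, and \eqref{ineq-key0}---but the paper takes a more roundabout route: it first orders the three quantities $|\hf(\{u,v\},T)|\ge|\hf(\{u,w\},T)|\ge|\hf(\{v,w\},T)|$, then argues by contradiction that if the two largest exceeded the smallest by too much one would obtain $|\hf(u)|\ge 2|\hf(\bar u)|$, violating $\varrho(\hf)<\tfrac23$; from the resulting near-balance it extracts $|\hf(\{v,w\},T)|>\tfrac13|\hf|-3\binom{n-3}{k-3}$ and finishes as you do. Your observation that $\varrho(\hf)<\tfrac23$ bounds the degree of \emph{every} vertex, so that $|\hf(\bar x)|>\tfrac13|\hf|$ for each $x\in T$, bypasses the ordering and balancing step entirely and gives the same numerical conclusion (indeed a slightly stronger one, $\tfrac{103}{126}$ versus the paper's implicit $\tfrac89-\tfrac{3}{42}=\tfrac{103}{126}$ as well) in a single line per pair. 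The decomposition $|\hf(\bar x)|=|\hf(P,T)|+|\hf(\{y_1\},T)|+|\hf(\{y_2\},T)|+|\hf(\emptyset,T)|$ is exactly what the paper uses later in Lemma~\ref{lem-key3}, so your streamlining is very much in the spirit of the surrounding arguments.
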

\begin{proof}
Renaming $u$, $v$ and $w$ if necessary, assume that
\begin{align}\label{ineq-key2}
|\hf(\{u,v\},T)|\geq |\hf(\{u,w\},T)|\geq |\hf(\{v,w\},T)|.
\end{align}
If  $|\hf(\{u,v\},T)|+ |\hf(\{u,w\},T)|\geq 2|\hf(\{v,w\},T)|+4\binom{n-3}{k-3}$, then by \eqref{ineq-key0}  we have
\begin{align*}
|\hf(u)|&\geq |\hf(\{u,v\},T)|+ |\hf(\{u,w\},T)|\\[5pt]
&\geq 2\left(|\hf(\{v,w\},T)|+2\binom{n-3}{k-3}\right)\\[5pt]
&\geq 2\left(|\hf(\{v,w\},T)|+|\hf(\{v\},T)|+|\hf(\{w\},T)|+|\hf(\emptyset,T)|\right)\\[5pt]
&=2|\hf(\bar{u})|.
\end{align*}
It follows that $\Delta(\hf)\geq |\hf(u)|\geq \frac{2}{3}|\hf|$, contradicting Corollary \ref{cor-3.3}. Thus,
\[
|\hf(\{u,v\},T)|+ |\hf(\{u,w\},T)|< 2|\hf(\{v,w\},T)|+4\binom{n-3}{k-3}.
\]
Then
\[
|\hf(\{u,v\},T)|+|\hf(\{u,w\},T)|+|\hf(\{v,w\},T)|< 3|\hf(\{v,w\},T)|+4\binom{n-3}{k-3}.
\]
Note that
\[
|\hf(\{u,v\},T)|+|\hf(\{u,w\},T)|+|\hf(\{v,w\},T)|= |\hf|-|\hf\setminus \hf_{uvw}| \geq |\hf| -4\binom{n-3}{k-3}.
\]
By Claim \ref{claim-1} and $n> 42k$,
\begin{align}\label{ineq-key3}
|\hf(\{v,w\},T)|> \frac{1}{3}|\hf|-3\binom{n-3}{k-3}\geq \frac{8}{9}\binom{n-3}{k-2}-3\binom{n-3}{k-3}> \frac{7}{9}\binom{n-3}{k-2}.
\end{align}
Thus \eqref{ineq-key5} follows.
\end{proof}

Set
\begin{align*}
&|\hf(\{u,v\},T)|=\binom{n-3}{k-2}-f_{uv},\\[5pt] &|\hf(\{u,w\},T)|=\binom{n-3}{k-2}-f_{uw},\\[5pt]  &|\hf(\{v,w\},T)|=\binom{n-3}{k-2}-f_{vw}
\end{align*}
and set also
\begin{align*}
&|\hf(\{u\},T)|=g_u,\ |\hf(\{v\},T)|=g_v,\  |\hf(\{w\},T)|=g_w\ \mbox{and } \ |\hf(\emptyset,T)|=h,\ |\hf(T,T)|=m.
\end{align*}
Then it is easy to see that
\begin{align*}
&|\hf(u)|=2\binom{n-3}{k-2}-f_{uv}-f_{uw}+g_u+m,\\[5pt]
&|\hf(v)|=2\binom{n-3}{k-2}-f_{uv}-f_{vw}+g_v+m,\\[5pt]
&|\hf(w)|=2\binom{n-3}{k-2}-f_{uw}-f_{vw}+g_w+m,\\[5pt]
&|\hf|= 3\binom{n-3}{k-2}-f_{uv}-f_{uw}-f_{vw}+g_u+g_v+g_w+m+h.
\end{align*}
Note that by $C\geq 1$ and $3-2C=2\varepsilon$ we have
\begin{align*}
&\quad\ 3(|\hf|-C\Delta(\hf)) \\[5pt]
&\leq 3|\hf|-C(|\hf(u)|+|\hf(v)|+|\hf(w)|)\\[5pt]
&=3(3-2C)\binom{n-3}{k-2}-(3-2C)(f_{uv}+f_{uw}+f_{vw})+(3-C)(g_u+g_v+g_w)+(3-3C)m+3h\\[5pt]
&\leq 3(3-2C)\binom{n-3}{k-2}-(3-2C)(f_{uv}+f_{uw}+f_{vw})+(3-C)(g_u+g_v+g_w)+3h\\[5pt]
&\leq  3(3-2C)\binom{n-3}{k-2}-2\varepsilon(f_{uv}+f_{uw}+f_{vw})+2(g_u+g_v+g_w)+3h.
\end{align*}
By \eqref{ineq-key5},
\[
|\hf(\{u,v\},T)| > \frac{7}{9}\binom{n-3}{k-2} > 3\binom{n-4}{k-3}\geq \binom{n-4}{k-3}+\binom{n-5}{k-3}+\binom{n-6}{k-3}.
\]
Since $\hf(\{u,v\},T)$, $\hf(\{w\},T)$ are cross-intersecting and $\hf(\{u,v\},T)$, $\hf(\emptyset,T)$ are cross-intersecting,
 applying Lemma \ref{lem-key} with $d=3$ we obtain that
\[
\binom{n-3}{k-2}-f_{uv}+\frac{\binom{n-6}{k-2}}{\binom{n-6}{k-4}}g_w \leq \binom{n-3}{k-2}
\]
and
\[
\binom{n-3}{k-2}-f_{uv}+\frac{\binom{n-6}{k-2}}{\binom{n-6}{k-3}}h \leq \binom{n-3}{k-2}.
\]
Since $n\geq \frac{21k}{\varepsilon}$ implies that
\[
\frac{\binom{n-6}{k-2}}{\binom{n-6}{k-4}} =\frac{(n-k-2)(n-k-3)}{(k-2)(k-3)} > \frac{2}{\varepsilon}, \ \frac{\binom{n-6}{k-2}}{\binom{n-6}{k-3}} =\frac{n-k-3}{k-2} > \frac{1}{\varepsilon},
\]
it follows that
\[
g_w < \frac{\varepsilon}{2}f_{uv}\mbox{ and }  h <  \varepsilon f_{uv}.
\]
Similarly,
\[
g_v <\frac{\varepsilon}{2} f_{uw},\  h <   \varepsilon f_{uw},\ g_u < \frac{\varepsilon}{2} f_{vw}, \ h <   \varepsilon f_{vw}.
\]
Adding these inequalities,
\[
2(g_u+g_v+g_w)+3h <2\varepsilon(f_{uv}+f_{uw}+f_{vw}).
\]
Thus,
\[
3\gamma_C(\hf)=3(|\hf|-C\Delta(\hf))<3(3-2C)\binom{n-3}{k-2},
\]
the finial contradiction.
 \end{proof}

\section{A stability result for the diversity theorem}

Let us prove a lemma.

\begin{lem}\label{lem-key3}
Let $\hf\subset \binom{[n]}{k}$ be an intersecting family. If $\gamma(\hf)=\left(1-\alpha\right)\binom{n-3}{k-2}$ with $0\leq \alpha<1$ and  $n\geq \frac{36k}{1-\alpha}$, then there exists $T:=\{u,v,w\}\subset [n]$ such that
\begin{align}\label{ineq4-key0}
 |\hf(\emptyset,T)|\leq \binom{n-7}{k-4}\mbox{ and }|\hf(\{x\},T)|\leq \binom{n-4}{k-3},\ x=u,v,w
\end{align}
and for $P\in \{\{u,v\}, \{u,w\}, \{v,w\}\}$,
\begin{align}\label{ineq4-key5}
|\hf(P,T)|  \geq  \left(1-\alpha\right)\binom{n-3}{k-2}- |\hf(\emptyset,T)| -\sum_{x\in P}|\hf(\{x\},T)|.
\end{align}
\end{lem}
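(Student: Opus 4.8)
The plan is to follow the same strategy used in the proof of Theorem~\ref{thm-main}, but now tracking the parameter $\alpha$ instead of the constant $C$. First I would locate a vertex $u$ of maximum degree, $|\hf(u)|=\Delta(\hf)$. Since $\gamma(\hf)=|\hf|-\Delta(\hf)=(1-\alpha)\binom{n-3}{k-2}$ and $n\geq \frac{36k}{1-\alpha}$, the quantity $|\hf(\bar u)|=\gamma(\hf)$ is large compared with $\binom{n-4}{k-3}$: indeed $|\hf(\bar u)|=(1-\alpha)\binom{n-3}{k-2}=(1-\alpha)\frac{n-k}{k-2}\binom{n-4}{k-3}$, and $n\geq\frac{36k}{1-\alpha}$ forces this to exceed $36\binom{n-4}{k-3}$. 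The family $\hf(\bar u)\subset\binom{[n]\setminus\{u\}}{k}$ is intersecting, so Theorem~\ref{thm-fw-3} (applied on the ground set $[n]\setminus\{u\}$ of size $n-1$, which still satisfies $n-1\geq 24k$) yields a vertex $v\neq u$ with $|\hf(\bar u,v)|=\varrho(\hf(\bar u))\,|\hf(\bar u)|>(\tfrac23-\tfrac{k}{n-1})|\hf(\bar u)|$, and one checks this is at least $5\binom{n-4}{k-3}$ using $n\geq\frac{36k}{1-\alpha}$.

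With $|\hf(u)|=\Delta(\hf)\geq|\hf(v)|$ and $|\hf(\bar u,v)|\geq 5\binom{n-4}{k-3}$ in hand, Lemma~\ref{lem-key2} applies verbatim and produces $w\in[n]\setminus\{u,v\}$ such that $|\hf(\emptyset,T)|\leq\binom{n-7}{k-4}$ and $|\hf(\{x\},T)|\leq\binom{n-4}{k-3}$ for $x\in T=\{u,v,w\}$, which is exactly \eqref{ineq4-key0}. For \eqref{ineq4-key5}, the point is just a counting identity: for $P=\{v,w\}$, say, we have
\[
|\hf| \;=\; \sum_{Q\subset T,\,|Q|\geq 2}|\hf(Q,T)| \;+\; \sum_{x\in T}|\hf(\{x\},T)| \;+\; |\hf(\emptyset,T)|,
\]
and since $|\hf(u)|=|\hf(\{u,v\},T)|+|\hf(\{u,w\},T)|+|\hf(T,T)|$, subtracting gives
\[
|\hf(\bar u)| \;=\; |\hf(\{v,w\},T)| \;+\; |\hf(\{v\},T)| \;+\; |\hf(\{w\},T)| \;+\; |\hf(\emptyset,T)|.
\]
Hence $|\hf(\{v,w\},T)| = |\hf(\bar u)| - |\hf(\{v\},T)| - |\hf(\{w\},T)| - |\hf(\emptyset,T)| = (1-\alpha)\binom{n-3}{k-2} - |\hf(\emptyset,T)| - \sum_{x\in\{v,w\}}|\hf(\{x\},T)|$, which is \eqref{ineq4-key5} with equality for this particular $P$. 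For the other two choices of $P$, namely $\{u,v\}$ and $\{u,w\}$, one uses that $u$ is the maximum-degree vertex: $|\hf(\bar u)|\leq|\hf(\bar x)|$ for $x=v,w$, and the same identity with $x$ in place of $u$ gives $|\hf(P,T)|=|\hf(\bar x)|-\big(\text{smaller terms}\big)\geq(1-\alpha)\binom{n-3}{k-2}-|\hf(\emptyset,T)|-\sum_{y\in P}|\hf(\{y\},T)|$, where the pair-terms not involving $x$ drop out and $|\hf(\bar x)|\geq|\hf(\bar u)|$ is used. Actually, more simply: $|\hf(P,T)| \geq |\hf(\bar x)| - |\hf(\{a\},T)| - |\hf(\{b\},T)| - |\hf(\emptyset,T)|$ where $\{a,b\}=T\setminus\{x\}$, wait — one must be careful that $P$ itself is one of the two pairs inside $\bar x$. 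Writing $T\setminus\{x\}=\{y,z\}$ so $P$ could be $\{x,y\}$: then $|\hf(\bar z)|=|\hf(\{x,y\},T)|+|\hf(\{x\},T)|+|\hf(\{y\},T)|+|\hf(\emptyset,T)|$, giving $|\hf(\{x,y\},T)|=|\hf(\bar z)|-|\hf(\{x\},T)|-|\hf(\{y\},T)|-|\hf(\emptyset,T)|\geq|\hf(\bar u)|-\sum_{t\in\{x,y\}}|\hf(\{t\},T)|-|\hf(\emptyset,T)|$ since $|\hf(\bar z)|\geq|\hf(\bar u)|=\Delta(\hf)$'s complement degree — no: $|\hf(\bar z)|=|\hf|-|\hf(z)|\geq|\hf|-\Delta(\hf)=|\hf(\bar u)|$. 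That is the needed inequality, and it holds for every $z$. So for any pair $P=\{x,y\}\subset T$, putting $z$ as the third element, \eqref{ineq4-key5} follows from $|\hf(\bar z)|\geq\gamma(\hf)$.

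I expect the main obstacle to be purely bookkeeping: verifying that the degree/constant conditions required by Theorem~\ref{thm-fw-3} and Lemma~\ref{lem-key2} are met under the hypothesis $n\geq\frac{36k}{1-\alpha}$ — in particular that $(1-\alpha)\binom{n-3}{k-2}\geq 36\binom{n-4}{k-3}$ and that, after one application of Theorem~\ref{thm-fw-3} on the $(n-1)$-element ground set, the resulting degree is still $\geq 5\binom{n-4}{k-3}$ so that Lemma~\ref{thm-fk} (hence Lemma~\ref{lem-key2}) applies. These amount to chasing the factor $\frac{n-k}{k-2}\geq\frac{n-k}{k}$ and the loss factor $\tfrac23-\tfrac{k}{n-1}$, which is bounded below once $n$ is linear in $k$; the constant $36$ in the hypothesis is exactly what makes these go through. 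Everything else — \eqref{ineq4-key0} is a direct citation of Lemma~\ref{lem-key2}, and \eqref{ineq4-key5} is the elementary partition identity above combined with $|\hf(\bar z)|\geq\gamma(\hf)$ for all $z$ — is routine.
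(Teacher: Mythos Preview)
Your approach is essentially identical to the paper's own proof: choose $u$ of maximum degree, apply Theorem~\ref{thm-fw-3} to $\hf(\bar u)$ to find $v$ with $|\hf(\bar u,v)|\geq 5\binom{n-4}{k-3}$, invoke Lemma~\ref{lem-key2} for \eqref{ineq4-key0}, and read off \eqref{ineq4-key5} from the identity $|\hf(\bar z)|=|\hf(T\setminus\{z\},T)|+\sum_{x\in T\setminus\{z\}}|\hf(\{x\},T)|+|\hf(\emptyset,T)|$ together with $|\hf(\bar z)|\geq\gamma(\hf)$. Two minor slips to fix: the correct ratio is $\binom{n-3}{k-2}/\binom{n-4}{k-3}=\tfrac{n-3}{k-2}$ (not $\tfrac{n-k}{k-2}$), and your formula for $|\hf(u)|$ omits the term $|\hf(\{u\},T)|$; neither affects the argument.
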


\begin{proof}
Let $|\hf(u)|=\Delta(\hf)$. Since $n\geq \frac{36k}{1-\alpha}\geq 36k$,  we have
\[
|\hf(\bar{u})|=\gamma(\hf)=\left(1-\alpha\right)\binom{n-3}{k-2}
=\left(1-\alpha\right)\frac{n-3}{k-2}\binom{n-4}{k-3}>36\binom{n-4}{k-3}.
\]
Let $\varepsilon =\frac{1-\alpha}{36}< \frac{1}{24}$.
Since $n\geq \frac{36k}{1-\alpha}=\frac{k}{\varepsilon}$, by Theorem \ref{thm-fw-3}  there exists  $v\in [n]\setminus \{u\}$ such that
\begin{align*}
|\hf(\bar{u},v)|> \left(\frac{2}{3}-\varepsilon\right)|\hf(\bar{u})|& \geq \left(\frac{2}{3}-\frac{1-\alpha}{36}\right)\left(1-\alpha\right)\binom{n-3}{k-2}\\[3pt]
&=\left(\frac{2}{3}-\frac{1-\alpha}{36}\right)\left(1-\alpha\right)\frac{n-3}{k-2}\binom{n-4}{k-3}\\[3pt]
&\geq \left(\frac{2}{3}-\frac{1-\alpha}{36}\right)\left(1-\alpha\right)\frac{36}{1-\alpha}\binom{n-4}{k-3}\\[3pt]
&>5\binom{n-4}{k-3}.
\end{align*}
By Lemma \ref{lem-key2} there exists $w\in [n]\setminus \{u,v\}$ such that \eqref{ineq4-key0} holds.

We are left to show \eqref{ineq4-key5}. Let $T=\{u,v,w\}$.
For $x\in T$ and  $T\setminus \{x\}=\{y,z\}$,
\begin{align*}
(1-\alpha)\binom{n-3}{k-2}=\gamma(\hf)&\leq |\hf(\bar{x})|=|\hf(\{y,z\},T)|+|\hf(\{y\},T)|+|\hf(\{z\},T)|+|\hf(\emptyset,T)|.
\end{align*}
 Thus,
\[
|\hf(\{y,z\},T)| \geq (1-\alpha)\binom{n-3}{k-2}-|\hf(\emptyset,T)|-|\hf(\{y\},T)|-|\hf(\{z\},T)|
\]
 and \eqref{ineq4-key5}  follows.
\end{proof}

One can derive Theorem \ref{thm-main2} from Lemma \ref{lem-key3}.

\begin{proof}[Proof of Theorem \ref{thm-main2}]
Note that $n\geq \frac{dk}{1-\alpha}$ and $d\geq 36$ imply $n\geq \frac{36k}{1-\alpha}$. By Lemma \ref{lem-key3} we infer that there exists $T:=\{u,v,w\}$ such that \eqref{ineq4-key0} and \eqref{ineq4-key5} hold.

Set
\begin{align*}
&|\hf(\{u,v\},T)|=\binom{n-3}{k-2}-f_{uv},\\[3pt] &|\hf(\{u,w\},T)|=\binom{n-3}{k-2}-f_{uw},\\[3pt]  &|\hf(\{v,w\},T)|=\binom{n-3}{k-2}-f_{vw}
\end{align*}
and set also
\begin{align*}
&|\hf(\{u\},T)|=g_u,\ |\hf(\{v\},T)|=g_v,\  |\hf(\{w\},T)|=g_w\ \mbox{and } |\hf(\emptyset,T)|=h.
\end{align*}

By \eqref{ineq4-key5},
\begin{align*}
&\binom{n-3}{k-2}-f_{uv}\geq \left(1-\alpha\right)\binom{n-3}{k-2}-h -g_u-g_v.
\end{align*}
It follows that
\begin{align*}
f_{uv}\leq \alpha\binom{n-3}{k-2}+h+g_u+g_v.
\end{align*}
Similarly,
\begin{align*}
f_{uw}\leq \alpha\binom{n-3}{k-2}+h+g_u+g_w,\
f_{vw}\leq \alpha\binom{n-3}{k-2}+h+g_v+g_w.
\end{align*}
Adding these inequalities, we get
\begin{align}\label{ineq4-3}
f_{uv}+f_{uw}+f_{vw}\leq 3\alpha\binom{n-3}{k-2}+3h+2(g_u+g_v+g_w).
\end{align}
By \eqref{ineq4-key0}, \eqref{ineq4-key5} and $n\geq\frac{dk}{1-\alpha}$, we have
\begin{align*}
|\hf(\{u,v\},T)|&\geq (1-\alpha)\binom{n-3}{k-2}-h-g_u-g_v\\[3pt]
 &> (1-\alpha)\frac{n-3}{k-2}\binom{n-4}{k-3}-3\binom{n-4}{k-3}\\[3pt]
 &> (d-3)\binom{n-4}{k-3}.
\end{align*}
Since $\hf(\{u,v\},T)$, $\hf(\{w\},T)$ are cross-intersecting and $\hf(\{u,v\},T)$, $\hf(\emptyset,T)$ are cross-intersecting, by Lemma
\ref{lem-key}  we have
\[
\binom{n-3}{k-2} -f_{uv} +\frac{\binom{n-d}{k-2}}{\binom{n-d}{k-d+2}}g_w \leq \binom{n-3}{k-2}
 \]
 and
 \[
\binom{n-3}{k-2} -f_{uv} +\frac{\binom{n-d}{k-2}}{\binom{n-d}{k-d+3}}h \leq \binom{n-3}{k-2}.
 \]
 It follows that
 \begin{align}\label{ineq4-4}
 g_w \leq \frac{\binom{n-d}{k-d+2}}{\binom{n-d}{k-2}}f_{uv},\ h \leq \frac{\binom{n-d}{k-d+3}}{\binom{n-d}{k-2}}f_{uv}.
 \end{align}
 Similarly,
 \begin{align}\label{ineq4-5}
g_v \leq \frac{\binom{n-d}{k-d+2}}{\binom{n-d}{k-2}}f_{uw},\ h \leq \frac{\binom{n-d}{k-d+3}}{\binom{n-d}{k-2}}f_{uw},\ g_u \leq \frac{\binom{n-d}{k-d+2}}{\binom{n-d}{k-2}}f_{vw},\ h \leq \frac{\binom{n-d}{k-d+3}}{\binom{n-d}{k-2}}f_{vw}.
 \end{align}
 Then
 \[
 3h+2(g_u+g_v+g_w) \leq \beta(f_{uv}+f_{uw}+f_{vw}),
 \]
where $\beta:= \frac{\binom{n-d}{k-d+3}+2\binom{n-d}{k-d+2}}{\binom{n-d}{k-2}}$. Note that $n\geq 36k$ implies
 \begin{align*}
 \beta<\frac{3\binom{n-d}{k-d+3}}{\binom{n-d}{k-2}}
 =\frac{3(k-2)(k-3)\ldots(k-d+3)}{(n-k-3)(n-k-4)\ldots(n-k-d+3)}
 <\frac{3}{35^2}<\frac{1}{6}.
 \end{align*}
 By \eqref{ineq4-3} we obtain that
 \[
 f_{uv}+f_{uw}+f_{vw} \leq \frac{3\alpha}{1-\beta} \binom{n-3}{k-2}.
 \]
 Using $\frac{1}{1-x} <1+2x$ for $x<\frac{1}{2}$,
 \[
  f_{uv}+f_{uw}+f_{vw} \leq 3\alpha(1+2\beta) \binom{n-3}{k-2} =3\alpha \binom{n-3}{k-2}+6\alpha\beta \binom{n-3}{k-2}< 4\alpha \binom{n-3}{k-2}.
\]
By \eqref{ineq4-1.2}, $n\geq \frac{dk}{1-\alpha}\geq dk$ and $d\geq 36$,
 \begin{align*}
 \binom{n-d}{k-2}\geq \frac{n-3-(d-3)(k-2)}{n-3}\binom{n-3}{k-2}\geq \frac{n-(d-3)k}{n}\binom{n-3}{k-2}\geq \frac{3}{d}\binom{n-3}{k-2}.
 \end{align*}
By \eqref{ineq4-4} and \eqref{ineq4-5},
\begin{align*}
|\hf\setminus \hf_{uvw}^*|=h+g_u+g_v+g_w&\leq\frac{\binom{n-d}{k-d+3}
+3\binom{n-d}{k-d+2}}{3\binom{n-d}{k-2}}(f_{uv}+f_{uw}+f_{vw})\\[3pt]
&\leq\frac{\binom{n-d+3}{k-d+3}
}{3\binom{n-d}{k-2}}\times 4\alpha \binom{n-3}{k-2}\\[3pt]
&\leq \frac{4}{3}\alpha \times \frac{d}{3}  \binom{n-d+3}{k-d+3}\\[3pt]
&\leq \frac{d\alpha}{2} \binom{n-d+3}{k-d+3}.
\end{align*}
By \eqref{ineq4-3},
\begin{align*}
|\hf_{uvw}\setminus \hf|= f_{uv}+f_{uw}+f_{vw} &\leq 3\alpha \binom{n-3}{k-2}+3(h+g_u+g_v+g_w)\\[5pt]
 &\leq 3\alpha \binom{n-3}{k-2}+\frac{3d\alpha}{2} \binom{n-d+3}{k-d+3}.
\end{align*}
\end{proof}

\section{Concluding remarks}

In the present paper, by using a maximum degree result in \cite{FW2022} we proved Magnan, Palmer and  Wood's $C$-diversity theorem for $1<C<\frac{3}{2}$ under a linear constraint of $n$.

Let $\hl=\{L_1,L_2,\ldots,L_7\}\subset \binom{[7]}{3}$ be the 3-graph formed by the seven lines of the Fano plane.
Let $\hl^c=\{[7]\setminus L\colon L \in \hl\}$ and let $\hl^+=\hl\cup \left(\binom{[7]}{4}\setminus \hl^c\right)$.
 For $n\geq 7$ and $k\geq 3$, let
\[
\hf_{\hl} =\left\{F\in \binom{[n]}{k}\colon F \cap [7]\in \hl\right\} \mbox{ and } \hf_{\hl^+} =\left\{F\in \binom{[n]}{k}\colon F \cap [7]\in \hl^+\right\}.
\]
It is easy to see that $|\hf_{\hl}| = 7\binom{n-7}{k-3}$, $|\hf_{\hl^+}| =7\binom{n-7}{k-3}+28\binom{n-7}{k-4}$,  $\Delta(\hf_{\hl})=3 \binom{n-7}{k-3}$ and $\Delta(\hf_{\hl^+})=3 \binom{n-7}{k-3}+16\binom{n-7}{k-4}$. Thus,
\[
\gamma_C(\hf_{\hl})=|\hf_{\hl}| -C\Delta(\hf_{\hl}) = (7-3C)\binom{n-7}{k-3}
 \]
 and
 \[\gamma_C(\hf_{\hl^+})=|\hf_{\hl^+}| -C\Delta(\hf_{\hl^+}) = (7-3C)\binom{n-7}{k-3}+(28-16C)\binom{n-7}{k-4}.
\]
For $C<\frac{3}{2}$ and $n\leq \frac{2(k-2)}{3-2C}$, we have
\begin{align*}
\gamma_C(\hf_{\hl}) = (7-3C)\binom{n-7}{k-3}>\frac{5}{2}\frac{k-2}{n-6} \binom{n-6}{k-2} >(3-2C)\frac{5}{4}\binom{n-6}{k-2}\geq (3-2C)\binom{n-3}{k-2}.
\end{align*}
Thus one needs at least  $n> \frac{2(k-2)}{3-2C}$ in Theorem \ref{thm-main}.

In \cite{MPW}, Magnan, Palmer and  Wood also determined the maximum $C$-diversity of an intersecting family  for $\frac{3}{2}\leq C\leq \frac{7}{3}$ and for $n$ sufficiently large with respect to $k$.

\begin{thm}[\cite{MPW}]\label{thm-mpw2}
Let $\hf\subset \binom{[n]}{k}$ be an intersecting family. For $n$ large enough,
\[
\gamma_C(\hf) \leq \left\{\begin{array}{ll}
                  (7-3C)\binom{n-7}{k-3}+(28-16C)\binom{n-7}{k-4}&\text{ for }\frac{3}{2}\leq C<\frac{7}{4}, \\[5pt]
                  (7-3C)\binom{n-7}{k-3}&\text{ for }\frac{7}{4}\leq C< \frac{7}{3},
                \end{array}\right.
\]
with equality holding if and only if $\hf$ is isomorphic to $\hf_{\hl^+}$ in the former case and $\hf_{\hl}$ in the latter (with $C\neq \frac{7}{4}$).
\end{thm}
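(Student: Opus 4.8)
The plan is to transplant the architecture of the proof of Theorem~\ref{thm-main}, but to organise the argument by the covering number instead of by the diversity. Since $\gamma_C(\hf_{\bigtriangleup})=(3-2C)\binom{n-3}{k-2}\le 0$ for $C\ge\tfrac32$ while $\gamma_C(\hf_{\hl})=(7-3C)\binom{n-7}{k-3}>0$ for every $C<\tfrac73$, the triangle family drops out and the extremal configuration now sits on a $7$-point skeleton of covering number $3$. Writing $\tau(\hf)$ for the covering number (the least size of a set meeting every member of $\hf$), the proof splits into $\tau(\hf)\le 2$ and $\tau(\hf)\ge 3$, after first trimming the trivial size ranges.

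First, the trivial ranges. If $|\hf|\ge 3\binom{n-3}{k-2}$ then, exactly as in Claim~\ref{claim-1}, $\gamma_C(\hf)=C\gamma(\hf)-(C-1)|\hf|\le(3-2C)\binom{n-3}{k-2}\le 0<\gamma_C(\hf_{\hl})$, and if $|\hf|\le(7-3C)\binom{n-7}{k-3}$ then $\gamma_C(\hf)\le|\hf|$ already settles the inequality. In what remains, Theorem~\ref{thm-fw-3} yields $\varrho(\hf)>\tfrac23-\tfrac kn$ once $|\hf|\ge 36\binom{n-3}{k-3}$, hence $\gamma_C(\hf)<\bigl(1-\tfrac23C+\tfrac{Ck}{n}\bigr)|\hf|$; when $C$ is bounded away from $\tfrac32$ this is negative for $n$ large, so the only genuinely serious regime is $C$ close to $\tfrac32$ with $|\hf|$ of order $\binom{n-3}{k-2}$ but $\tau(\hf)\ge 3$.

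Next, $\tau(\hf)\le 2$. If $\tau(\hf)=1$ then $\Delta(\hf)=|\hf|$ and $\gamma_C(\hf)=(1-C)|\hf|\le 0$. If $\tau(\hf)=2$, fix a minimum cover $\{a,b\}$ and set $x_a=|\hf(a,\bar b)|\ge x_b=|\hf(\bar a,b)|$ and $x_{ab}=|\hf(\{a,b\},\{a,b\})|$, so that $\Delta(\hf)\ge|\hf(a)|=x_a+x_{ab}$ and therefore $\gamma_C(\hf)\le x_b-(C-1)(x_a+x_{ab})\le(2-C)x_b$. If $x_a,x_b\ge 5\binom{n-4}{k-3}$, then Lemma~\ref{thm-fk}, applied to the cross-intersecting $(k-1)$-graphs $\hf(a,\bar b)$ and $\hf(\bar a,b)$, produces a point $c$ for which both of these families lie within $\binom{n-4}{k-3}$ of the full star at $c$; hence $|\hf(c)|\ge x_a+x_b-2\binom{n-4}{k-3}$, and substituting this together with $\Delta(\hf)\ge x_a+x_{ab}$ into $\gamma_C(\hf)=|\hf|-C\Delta(\hf)$ gives $\gamma_C(\hf)=O\bigl(\binom{n-4}{k-3}\bigr)<\gamma_C(\hf_{\hl})$ for $n$ large. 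The residual band $x_b<5\binom{n-4}{k-3}$ is closed by combining $\gamma_C(\hf)\le(2-C)x_b$ with the same cross-intersecting analysis: a common heavy point of $\hf(a,\bar b)$ and $\hf(\bar a,b)$ forces $\varrho(\hf)$ up precisely once $x_b$ grows large enough to threaten $\gamma_C(\hf_{\hl})$. Thus every $\tau\le 2$ family is strictly beaten.

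The heart of the matter, and where I expect the real difficulty, is $\tau(\hf)\ge 3$. One first shows $|\hf|=O_k\bigl(\binom{n-3}{k-3}\bigr)$ and, sharpening this, that up to an $O\bigl(\binom{n-7}{k-4}\bigr)$ perturbation $\hf$ is governed by a $7$-element skeleton $Y$ on which the trace of $\hf$ must be isomorphic to the Fano plane $\hl$: among all $\tau=3$ skeletons the Fano plane is the unique one balanced enough ($7$ lines, each of its $7$ points on exactly $3$) for $|\hf|-C\Delta(\hf)$ to stay competitive, and establishing this structural dichotomy under a \emph{linear} bound on $n$ — rather than via MPW's exponential sunflower reduction — is where one must bring in the classification of intersecting families of covering number $3$ together with the Hilton--Sperner estimates of Lemmas~\ref{lem-key0} and~\ref{lem-key}. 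Once $\hf$ is pinned to within $O\bigl(\binom{n-7}{k-4}\bigr)$ of $\hf_{\hl}$, one expresses $|\hf|$ and the seven skeleton-point degrees through the ``missing'' line-layers and the ``extra'' $4$-set layers relative to $\hf_{\hl}$, bounds all cross-terms by Lemmas~\ref{lem-key0} and~\ref{lem-key}, and optimises: removing sets from a line-layer lowers $|\hf|$ without lowering $\Delta(\hf)$ (the four skeleton points not on that line still realise the old maximum), so each missing line-layer carries coefficient $-1$; and the block of all $28$ admissible $4$-sets of $Y$, each skeleton point lying in exactly $16$ of them, contributes $(28-16C)\binom{n-7}{k-4}$, positive iff $C<\tfrac74$, and the symmetry of the Fano complement design makes this block all-or-nothing. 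Hence $\hf_{\hl^+}$ is the unique maximiser for $\tfrac32\le C<\tfrac74$ and $\hf_{\hl}$ the unique maximiser for $\tfrac74<C<\tfrac73$, the vanishing of that coefficient accounting for the degeneracy at $C=\tfrac74$. To repeat, the principal obstacle is the skeleton classification: unlike the three-point triangle case it really needs the covering-number structure theory, and the subsequent bookkeeping — seven skeleton points and two competing extremal families — is correspondingly heavier than in Theorem~\ref{thm-main}.
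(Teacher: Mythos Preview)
This theorem is not proved in the paper at all. It appears in the concluding remarks, quoted from \cite{MPW}, solely to motivate an open problem; immediately after stating it the authors write that ``our methods are not strong enough to achieve it'' under a linear constraint on $n$. There is therefore no in-paper proof to compare your proposal against.

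As for the proposal on its own terms, it is an outline with an acknowledged hole at the decisive point. You correctly locate the main difficulty in the $\tau(\hf)\ge 3$ regime and correctly anticipate that the Fano skeleton should emerge, but you explicitly flag the reduction to a seven-point skeleton and the identification of its trace with $\hl$ as ``the principal obstacle'' without supplying an argument; that is exactly the step the present paper disclaims the tools to handle, and your references to ``the classification of intersecting families of covering number~$3$'' and to Lemmas~\ref{lem-key0}--\ref{lem-key} do not by themselves force a Fano structure. The $\tau(\hf)=2$ case is also not closed: from $\gamma_C(\hf)\le(2-C)x_b$ and the cutoff $x_b<5\binom{n-4}{k-3}$ you get at $C=\tfrac32$ only $\tfrac52\binom{n-4}{k-3}$, which for large $n$ exceeds $\gamma_C(\hf_{\hl^+})=\tfrac52\binom{n-7}{k-3}+4\binom{n-7}{k-4}$ (the gap $\tfrac52\bigl[\binom{n-4}{k-3}-\binom{n-7}{k-3}\bigr]\sim\tfrac{15}{2}\binom{n-7}{k-4}$ is not covered by the $4\binom{n-7}{k-4}$ term), so the ``residual band'' genuinely needs the further cross-intersecting analysis you allude to but do not carry out. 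In short, the proposal is a reasonable roadmap, but it is not a proof, and there is nothing in the paper to set it against.
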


It is natural to ask whether Theorem \ref{thm-mpw2} also holds under a linear constraint of $n$. However, our methods are not strong enough to achieve it.

Let us conclude this paper by a open problem concerning the ordinary diversity.

\begin{conj}
Suppose that $\hf\subset \binom{[n]}{k}$ is intersecting. If $n>4k$, then
\begin{align*}
\gamma(\hf) \leq \binom{n-3}{k-2}.
\end{align*}
\end{conj}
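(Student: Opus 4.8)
The plan is to work within the framework behind Theorems~\ref{thm-main} and~\ref{thm-main2}: pick a vertex $u$ of maximum degree, so that $\gamma(\hf)=|\hf(\bar u)|$, where $\hf(\bar u)\subset\binom{[n]\setminus u}{k}$ is intersecting and cross-intersecting with $\hf(u)\subset\binom{[n]\setminus u}{k-1}$, and $|\hf(u)|\ge|\hf(\bar u)|$. Two cheap reductions first collapse the range of $|\hf|$. If $|\hf|\le\binom{n-3}{k-2}$ there is nothing to prove, since $\gamma(\hf)\le|\hf|$; and if $|\hf|>|\hf_3|$ then, since $\gamma(\hf_3)=\binom{n-3}{k-2}$ is the largest of the $\gamma(\hf_i)$, the diversity form of Theorem~\ref{thm-frankl} applied with $i=3$ forces (contrapositively) $\gamma(\hf)<\binom{n-3}{k-2}$. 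So one may assume $\binom{n-3}{k-2}<|\hf|\le|\hf_3|=\binom{n-2}{k-2}+2\binom{n-3}{k-2}$. Already this exposes the difficulty near $n=4k$: the thresholds $36\binom{n-3}{k-3}$ in Theorem~\ref{thm-fw-3} and $5\binom{n-4}{k-3}$ in Lemma~\ref{thm-fk} are of order $\tfrac{k}{n}\binom{n-3}{k-2}$, hence \emph{exceed} the entire admissible range $|\hf|\le|\hf_3|$ as soon as $n$ is not much larger than $4k$, so these tools — the engine of the $36k$ bound — become inapplicable, and everything must be redone with constants improved by roughly an order of magnitude.

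I would then split on the covering number $\tau(\hf)$. For $\tau(\hf)=1$, $\hf$ is a star and $\gamma(\hf)=0$. For $\tau(\hf)=2$, fix a cover $\{a,b\}$; if the maximum-degree vertex $u$ lies in $\{a,b\}$, say $u=a$, then every member of $\hf$ avoiding $a$ contains $b$, so $\gamma(\hf)=|\hf(b,\bar a)|$, while $\hf(a,\bar b),\hf(b,\bar a)\subset\binom{[n]\setminus\{a,b\}}{k-1}$ are cross-intersecting with $|\hf(a,\bar b)|\ge|\hf(b,\bar a)|$ (from $|\hf(a)|\ge|\hf(b)|$), and Lemma~\ref{lem-key0} with $d=1$ — which only needs $n\ge 2k$ — gives $\gamma(\hf)=|\hf(b,\bar a)|\le\binom{n-3}{k-2}$, with room to spare. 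For $\tau(\hf)\ge 3$ the sizes of intersecting families are governed by Fano-type configurations; the encouraging sign is that $\gamma(\hf_{\hl})=4\binom{n-7}{k-3}$ and $\gamma(\hf_{\hl^+})=4\binom{n-7}{k-3}+12\binom{n-7}{k-4}$ both drop below $\binom{n-3}{k-2}$ as soon as $n$ is of order $4k$, matching the conjectured threshold, so one expects $\gamma(\hf)<\binom{n-3}{k-2}$ for $\tau\ge 3$. Turning this into a proof, however, requires a clean upper bound for intersecting families of covering number $\ge 3$ valid already at $n>4k$ rather than at $n\gg k$, which is not on the shelf.

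The stubborn case is $\tau(\hf)=2$ with the maximum-degree vertex $u$ \emph{outside} a cover $\{a,b\}$. Setting $X=|\hf(\bar u,a,b)|$, $Y=|\hf(\bar u,a,\bar b)|$, $Z=|\hf(\bar u,\bar a,b)|$ so that $\gamma(\hf)=X+Y+Z$, the inequalities $|\hf(u)|\ge|\hf(a)|$ and $|\hf(u)|\ge|\hf(b)|$ give, via Fact~\ref{fact-key} together with the observation that $\hf(u,\bar a)$ and $\hf(u,\bar b)$ consist of $k$-sets through $\{u,b\}$, resp.\ $\{u,a\}$, the bounds $X+Y=|\hf(\bar u,a)|\le\binom{n-3}{k-2}$ and $X+Z=|\hf(\bar u,b)|\le\binom{n-3}{k-2}$, while cross-intersectedness of $\hf(\bar u,a,\bar b)$ and $\hf(\bar u,\bar a,b)$ inside $\binom{[n]\setminus\{u,a,b\}}{k-1}$ plus Lemma~\ref{lem-key0} gives $\min(Y,Z)\le\binom{n-4}{k-2}$; all told this yields only $\gamma(\hf)\le\binom{n-3}{k-2}+\binom{n-4}{k-2}$, off by a factor approaching $2$. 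Killing this last factor is, I expect, where the real content of the conjecture sits: it is essentially equivalent to the sharp density statement that $\varrho(\hf)\ge 1-\binom{n-3}{k-2}/|\hf|$ for every intersecting $\hf$ with $n>4k$ — a statement tight at $\hf=\hf_3$, and hence immune to the slack-tolerant arguments behind Theorem~\ref{thm-fw-3}. Achieving it would need either a refinement of Lemma~\ref{thm-fk} usable with size hypotheses a small fraction of $\binom{n-3}{k-2}$ and with the ground set as small as about $2(k-1)$, or a new, genuinely local argument in the spirit of the concluding step of the proof of Theorem~\ref{thm-main2}, where the deficits $\binom{n-3}{k-2}-|\hf(P,T)|$ cross-subsidise one another — but carried out without ever assuming that $n$ is a large multiple of $k$.
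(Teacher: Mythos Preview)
The statement you were asked to prove is presented in the paper as an \emph{open conjecture}: immediately after stating it, the authors note that Theorem~\ref{thm-fw-2} settles the range $n>36k$ but that ``it is open for the range $4k<n\le 36k$.'' There is therefore no proof in the paper to compare your attempt against.

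Your proposal is not a proof either, and you say so yourself. The easy pieces are handled correctly: the reduction via the diversity form of Theorem~\ref{thm-frankl} to $|\hf|\le|\hf_3|$ is valid for all $n>2k$; the case $\tau(\hf)=1$ is trivial; and your argument for $\tau(\hf)=2$ with the maximum-degree vertex inside a $2$-cover is sound (the implicit dichotomy --- either $|\hf(a,\bar b)|\ge\binom{n-3}{k-2}$ so that Lemma~\ref{lem-key0} bounds $|\hf(b,\bar a)|$, or $|\hf(a,\bar b)|<\binom{n-3}{k-2}$ and then $|\hf(b,\bar a)|\le|\hf(a,\bar b)|$ already suffices --- is routine). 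But in the two remaining cases you explicitly stop short: for $\tau(\hf)\ge 3$ you defer to a bound that is ``not on the shelf'', and for $\tau(\hf)=2$ with $u\notin\{a,b\}$ your inequalities only yield $\gamma(\hf)\le\binom{n-3}{k-2}+\binom{n-4}{k-2}$, off by nearly a factor of two. Your diagnosis of why the machinery behind Theorem~\ref{thm-fw-3} and Lemma~\ref{thm-fk} loses traction near $n=4k$ is accurate, and your concluding remarks about what a genuine argument would require are sensible, but they describe the obstruction rather than overcome it. In short, your write-up is a fair account of why the conjecture is hard --- fully consistent with the paper's own position that it remains open --- and not a proof.
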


Theorem \ref{thm-fw-2} shows that the Conjecture holds for $n>36k$. However, it is open for the range $4k<n\leq 36k$.

\begin{appendix}
\section{A proof of Lemma \ref{thm-fk}.}

Two families $\ha$, $\hb$ are said to be cross $t$-intersecting if $|A\cap B|\geq t$ for all $A\in \ha$ and $B\in \hb$.

\begin{thm}[\cite{F78}]\label{thm-fk2}
Suppose that $\ha,\hb\subset \binom{[n]}{k}$ are cross $t$-intersecting, $|\ha|\leq |\hb|$. Then either $|\hb| \leq \binom{n}{k-t}$ or $|\ha| \leq \binom{n}{k-t-1}$.
\end{thm}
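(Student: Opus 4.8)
The plan is to attack the statement with the shifting (compression) technique and then with an induction on a suitably strengthened, more flexible assertion. \emph{First}, for $1\le i<j\le n$ apply the usual compression $S_{ij}$ simultaneously to $\ha$ and to $\hb$. A routine case check — distinguishing whether a given pair $A\in\ha$, $B\in\hb$ was actually moved by $S_{ij}$, and using that the ``un-shifted'' version of a moved set still lies in the family — shows that cross-$t$-intersectingness is preserved, while $|\ha|$ and $|\hb|$ are unchanged. Iterating, I may assume $\ha$ and $\hb$ are both shifted.

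\emph{Next}, since the conclusion of Theorem \ref{thm-fk2} is awkward to induct on directly, I would prove, by induction on $n+a+b$, the asymmetric two-uniformity version: \emph{if $\ha\subseteq\binom{[n]}{a}$, $\hb\subseteq\binom{[n]}{b}$ are cross-$t$-intersecting and $|\ha|>\binom{n}{a-t-1}$, then $|\hb|\le\binom{n}{b-t}$.} Theorem \ref{thm-fk2} follows by taking $a=b=k$: either $|\ha|>\binom{n}{k-t-1}$, and the statement gives $|\hb|\le\binom{n}{k-t}$, or $|\ha|\le\binom{n}{k-t-1}$; so one of the two alternatives holds, the hypothesis $|\ha|\le|\hb|$ only serving to decide which family is called $\ha$. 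The base cases $t=0$ (then $|\hb|\le\binom{n}{b}$ trivially), $n=a$, and $a\le t$ are checked by hand.

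\emph{The inductive step.} With $\ha,\hb$ shifted, split off the top point $n$: put $\ha_0=\ha(\bar n)\subseteq\binom{[n-1]}{a}$, $\ha_1=\ha(n)\subseteq\binom{[n-1]}{a-1}$, and similarly $\hb_0,\hb_1$. Then $(\ha_0,\hb_0)$, $(\ha_0,\hb_1)$, $(\ha_1,\hb_0)$ are cross-$t$-intersecting (of uniformities $(a,b)$, $(a,b-1)$, $(a-1,b)$), while $(\ha_1,\hb_1)$ is cross-$(t-1)$-intersecting of uniformity $(a-1,b-1)$. Using $\binom{n}{a-t-1}=\binom{n-1}{a-t-1}+\binom{n-1}{a-t-2}$ and $\binom{n}{b-t}=\binom{n-1}{b-t}+\binom{n-1}{b-t-1}$, the hypothesis $|\ha_0|+|\ha_1|>\binom{n}{a-t-1}$ forces $|\ha_0|>\binom{n-1}{a-t-1}$ or $|\ha_1|>\binom{n-1}{a-t-2}$. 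In the first case, feeding $(\ha_0,\hb_0)$ and $(\ha_0,\hb_1)$ into the induction hypothesis yields exactly $|\hb_0|\le\binom{n-1}{b-t}$ and $|\hb_1|\le\binom{n-1}{b-t-1}$, and summing closes the step.

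\emph{The main obstacle} is the complementary branch, where almost all of $\ha$ sits on sets through $n$ (so $|\ha_1|$ is large but $|\ha_0|$ may be small): the only cross-intersecting pair involving the big family $\ha_1$ that lowers the uniformity of $\hb$ is $(\ha_1,\hb_1)$, which is merely cross-$(t-1)$-intersecting, so the size threshold needed to invoke the induction is just out of reach and the naive estimate on $|\hb_0|+|\hb_1|$ loses a constant factor. Closing this branch is precisely where shiftedness must enter essentially — e.g.\ via $\ha_1\subseteq\partial^{(a-1)}\ha_0$ (valid for shifted $\ha$ when $a<n$) together with a Kruskal–Katona bound forcing $|\ha_0|$ to be large as well — or, alternatively, by replacing the single-binomial hypothesis throughout with a self-propagating ``staircase'' hypothesis of the kind used in Lemma \ref{lem-key0} and transferring back to the stated form at the very end. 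I expect essentially all the genuine content to lie in reconciling this branch; everything else is Pascal's triangle.
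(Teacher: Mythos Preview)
The paper does not prove Theorem~\ref{thm-fk2}; it is quoted from~\cite{F78} and used as a black box in the appendix proof of Lemma~\ref{thm-fk}. There is therefore no in-paper argument to compare your proposal against.

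On the proposal itself: the shift-then-induct skeleton is standard and you have correctly located where the content lies, but the branch you flag is a genuine gap and neither of your suggested repairs closes it as written. The Kruskal--Katona idea points the wrong way: from $\ha_1\subseteq\partial\ha_0$ you only get $|\ha_1|\le|\partial\ha_0|$, and Kruskal--Katona (equivalently Proposition~\ref{sperner}) bounds $|\partial\ha_0|$ from \emph{below} in terms of $|\ha_0|$; a large $|\ha_1|$ is perfectly compatible with a small $|\ha_0|$, so you cannot force yourself back into the first branch. The ``staircase hypothesis'' alternative is only a gesture, not an argument. What is actually needed is a structural consequence of shiftedness different from $\ha_1\subseteq\partial\ha_0$: for instance, split off the element $1$ rather than $n$ and use that for shifted cross-$t$-intersecting families with $t\ge 1$ the pair $(\ha(\bar 1),\hb(\bar 1))$ is cross-$(t{+}1)$-intersecting (if $|A\cap B|=t$ with $1\notin A\cup B$, replace a common element by $1$ to violate cross-$t$). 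This gain of one in $t$ is exactly what offsets the loss of one in the $(\ha(n),\hb(n))$ pair, and with it the Pascal bookkeeping can be made to match; but carrying that out is the real substance of the theorem, and your outline stops just short of it.
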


For a family $\hf\subset \binom{[n]}{k}$ and two integers $1\leq i<j\leq n$, one defines the $i\leftarrow j$ shift $S_{ij}$ by
\[
S_{ij}(\hf) =\{S_{ij}(F)\colon F\in \hf\},
\]
where
$$S_{ij}(F)=\left\{
                \begin{array}{ll}
                  F':=(F\setminus\{j\})\cup\{i\}, & j\in F, i\notin F \text{ and } F' \notin \hf; \\[5pt]
                  F, & \hbox{otherwise.}
                \end{array}
              \right.
$$
Obviously, $|S_{ij}(\hf)|=|\hf|$, $|S_{ij}(F)|=|F|$ and it is known (cf. e.g. \cite{F87}) that the $i\leftarrow j$ shift maintains the cross-intersecting property.

\begin{proof}[Proof of Lemma \ref{thm-fk}]
Suppose indirectly that \eqref{ineq-frankl2} does not hold. We claim that
\begin{align}\label{ineq2-1}
\max\{|\ha(j)|,|\hb(j)|\} <\binom{m-2}{\ell-2}+\binom{m-3}{\ell-2} \mbox{ for all } j\in [m].
\end{align}
Indeed, if $|\ha(j)|\geq \binom{m-2}{\ell-2}+\binom{m-3}{\ell-2}$ then by Lemma \ref{lem-key0} we have $|\hb(\bar{j})|\leq \binom{m-3}{\ell-2}$. It follows that
\[
|\hb(j)| =|\hb| -|\hb(\bar{j})| \geq 4\binom{m-2}{\ell-2}> \binom{m-2}{\ell-2}+\binom{m-3}{\ell-2}.
\]
Applying Lemma \ref{lem-key0} to $\hb(j)$ and $\ha(\bar{j})$ yields $|\ha(\bar{j})|\leq \binom{m-3}{\ell-2}$. This would prove \eqref{ineq-frankl2} contradicting the indirect assumption. Thus \eqref{ineq2-1} is proved.

Let us try and shift simultaneously $\ha$ and $\hb$. We claim that we never arrive at families  $\widetilde{A}$, $\widetilde{B}$ (at the first time) failing \eqref{ineq2-1}. By symmetry assume that after applying the shift $S_{ij}$,
\begin{align}\label{ineq2-2}
|\widetilde{\ha}(i)|\geq \binom{m-2}{\ell-2}+\binom{m-3}{\ell-2}.
\end{align}
By Lemma \ref{lem-key0} we have $|\widetilde{\hb}(\bar{i})|\leq \binom{m-3}{\ell-2}$.
Note that \eqref{ineq2-1} implies that
\[
|\widetilde{\hb}(i)| \leq |\hb(i)|+|\hb(j)| <4\binom{m-2}{\ell-2}.
\]
Hence,
\[
|\widetilde{\hb}|=|\widetilde{\hb}(i)|+|\widetilde{\hb}(\bar{i})|<5\binom{m-2}{\ell-2},
\]
a contradiction.

Now we proved that one can shift $\ha$, $\hb$ simultaneously without violating \eqref{ineq2-1}. By abuse of notation let $\ha$ and $\hb$ denote the shifted families that we eventually obtain. Then $\ha(\bar{1}), \hb(\bar{1})\subset \binom{[2,m]}{\ell}$ are cross 2-intersecting. By Theorem \ref{thm-fk2} and symmetry we may assume that $|\ha(\bar{1})|<\binom{m-1}{\ell-2}<2\binom{m-2}{\ell-2}$. This leads to $|\ha(1)|=|\ha|-|\ha(\bar{1})|>2\binom{m-2}{\ell-2}$, the final contradiction.
\end{proof}

\end{appendix}

\end{document}